\documentclass[notitlepage,11pt,reqno]{amsart}
\usepackage[foot]{amsaddr}
\usepackage[numbers,sort]{natbib}

\usepackage{amssymb,nicefrac,bm,upgreek,mathtools,verbatim}
\usepackage[final]{hyperref}
\usepackage[mathscr]{eucal}
\usepackage{dsfont}

\usepackage{color}
\definecolor{dmagenta}{rgb}{.4,.1,.5}
\definecolor{dblue}{rgb}{.0,.0,.5}
\definecolor{mblue}{rgb}{.0,.0,.8}
\definecolor{ddblue}{rgb}{.0,.0,.4}
\definecolor{dred}{rgb}{.6,.0,.0}
\definecolor{dgreen}{rgb}{.0,.5,.0}
\definecolor{Eeom}{rgb}{.0,.0,.5}

\usepackage[normalem]{ulem}

\usepackage[margin=1in]{geometry}
\allowdisplaybreaks

\newtheorem{lemma}{Lemma}[section]
\newtheorem{theorem}{Theorem}[section]

\newtheorem{corollary}{Corollary}[section]

\theoremstyle{definition}

\newtheorem{assumption}{Assumption}[section]

\newtheorem{example}{Example}[section]
\newtheorem{remark}{Remark}[section]

\numberwithin{equation}{section}

\hypersetup{
  colorlinks=true,
  citecolor=dblue,
  linkcolor=dblue,
  frenchlinks=false,
  pdfborder={0 0 0},
  naturalnames=false,
  hypertexnames=false,
  breaklinks}
\usepackage[capitalize,nameinlink]{cleveref}

\crefname{section}{Section}{Sections}
\crefname{subsection}{Subsection}{Subsections}
\crefname{condition}{Condition}{Conditions}
\crefname{hypothesis}{Hypothesis}{Conditions}
\crefname{assumption}{Assumption}{Assumptions}
\crefname{lemma}{Lemma}{Lemmas}
\crefname{claim}{Claim}{Claims}

\Crefname{figure}{Figure}{Figures}

\crefformat{equation}{\textup{#2(#1)#3}}
\crefrangeformat{equation}{\textup{#3(#1)#4--#5(#2)#6}}
\crefmultiformat{equation}{\textup{#2(#1)#3}}{ and \textup{#2(#1)#3}}
{, \textup{#2(#1)#3}}{, and \textup{#2(#1)#3}}
\crefrangemultiformat{equation}{\textup{#3(#1)#4--#5(#2)#6}}%
{ and \textup{#3(#1)#4--#5(#2)#6}}{, \textup{#3(#1)#4--#5(#2)#6}}%
{, and \textup{#3(#1)#4--#5(#2)#6}}

\Crefformat{equation}{#2Equation~\textup{(#1)}#3}
\Crefrangeformat{equation}{Equations~\textup{#3(#1)#4--#5(#2)#6}}
\Crefmultiformat{equation}{Equations~\textup{#2(#1)#3}}{ and \textup{#2(#1)#3}}
{, \textup{#2(#1)#3}}{, and \textup{#2(#1)#3}}
\Crefrangemultiformat{equation}{Equations~\textup{#3(#1)#4--#5(#2)#6}}%
{ and \textup{#3(#1)#4--#5(#2)#6}}{, \textup{#3(#1)#4--#5(#2)#6}}%
{, and \textup{#3(#1)#4--#5(#2)#6}}

\crefdefaultlabelformat{#2\textup{#1}#3}

%

\makeatletter
\DeclareRobustCommand\widecheck[1]{{\mathpalette\@widecheck{#1}}}
\def\@widecheck#1#2{%
    \setbox\z@\hbox{\m@th$#1#2$}%
    \setbox\tw@\hbox{\m@th$#1%
       \widehat{%
          \vrule\@width\z@\@height\ht\z@
          \vrule\@height\z@\@width\wd\z@}$}%
    \dp\tw@-\ht\z@
    \@tempdima\ht\z@ \advance\@tempdima2\ht\tw@ \divide\@tempdima\thr@@
    \setbox\tw@\hbox{%
       \raise\@tempdima\hbox{\scalebox{1}[-1]{\lower\@tempdima\box
\tw@}}}%
    {\ooalign{\box\tw@ \cr \box\z@}}}

\def\subsection{\@startsection{subsection}{0}%
\z@{\linespacing\@plus\linespacing}{\linespacing}%
{\bf}}

\makeatother


\DeclareMathOperator{\Exp}{\mathbb{E}} 
\DeclareMathOperator{\Prob}{\mathbb{P}} 
\newcommand{\D}{\mathrm{d}}          
\newcommand{\RR}{\mathbb{R}}         
\newcommand{\Rd}{{\mathbb{R}^d}}       
\newcommand{\Ind}{\mathds{1}}            





\newcommand{\sB}{\mathscr{B}}    
\newcommand{\sU}{\mathscr{U}}
\newcommand{\cC}{\mathcal{C}}     
\newcommand{\cD}{\mathcal{D}}     
\newcommand{\cF}{\mathcal{F}}
\newcommand{\cH}{\mathcal{H}}

\newcommand{\cK}{\mathcal{K}}
\newcommand{\sK}{\mathscr{K}}     
\newcommand{\sM}{\mathscr{M}}     
\newcommand{\cN}{\mathcal{N}}


\newcommand{\abs}[1]{\lvert#1\rvert}
\newcommand{\norm}[1]{\lVert#1\rVert}

\providecommand{\pro}[1]{(#1_t)_{t \geq 0}}

\providecommand{\semi}[1]{\{#1_t: t \geq 0\}}

\DeclareMathOperator{\Dom}{Dom}
\DeclareMathOperator{\Spec}{Spec}
\DeclareMathOperator{\diam}{diam}
\DeclareMathOperator{\inrad}{inrad}

\newcommand{\ex}{\mathbb{E}}

\newcommand{\Dst}{\mathrel{\stackrel{\makebox[0pt]{\mbox{\normalfont\tiny d}}}{=}}}

\DeclareMathOperator*{\supp}{supp}
\DeclareMathOperator*{\Int}{Int}

\DeclareMathOperator{\dist}{dist}

\DeclareMathOperator{\Psidel}{\Psi(-\Delta)}

\setcounter{tocdepth}{2}
\begin{document}

\title[Universal Constraints]%
{\sc \textbf{Universal Constraints on the Location of Extrema of Eigenfunctions of Non-Local Schr\"odinger Operators}}

\author{Anup Biswas and J\'ozsef L\H{o}rinczi}

\address{Anup Biswas \\
Department of Mathematics, Indian Institute of Science Education and Research, Dr. Homi Bhabha Road,
Pune 411008, India, anup@iiserpune.ac.in}

\address{J\'ozsef L\H{o}rinczi \\
Department of Mathematical Sciences, Loughborough University, Loughborough LE11 3TU, United Kingdom,
J.Lorinczi@lboro.ac.uk}

\date{}


\begin{abstract}
We derive a lower bound on the location of global extrema of eigenfunctions for a large class of non-local
Schr\"odinger operators in convex domains under Dirichlet exterior conditions, featuring the symbol of the
kinetic term, the strength of the potential, and the corresponding eigenvalue, and involving a new universal
constant. We show a number of probabilistic and spectral geometric implications, and derive a Faber-Krahn
type inequality for non-local operators. Our study also extends to potentials with compact support, and we
establish bounds on the location of extrema relative to the boundary edge of the support or level sets around
minima of the potential.
\end{abstract}

\keywords{Non-local Schr\"odinger operators, Bernstein functions, subordinate Brownian motion, Dirichlet exterior
value problem, principal eigenvalues and eigenfunctions, hot spots, Faber-Krahn inequality, potential wells}

\subjclass[2000]{35S15, 47A75, 60G51, 60J75}

\maketitle

\section{\textbf{Introduction}}
Recently, in the paper \cite{RS-17} the remarkable bound
\begin{equation}
\label{RS}
\dist(x^*, \partial \cD) \geq c \, \Big\|\frac{\Delta \varphi}{\varphi} \Big\|^{-1/2}_{L^\infty(\cD)}
\end{equation}
has been obtained on the distance between the location of an assumed global maximum $x^*$ of any eigenfunction $\varphi$
of the Schr\"odinger operator $H = - \Delta + V$ with Dirichlet boundary condition set for a simply connected domain
$\cD \subset \RR^2$, and the boundary of $\cD$. Here the potential $V$ is bounded and includes the eigenvalue corresponding
to $\varphi$, and $c > 0$ is a constant, independent of $\cD$, $\varphi$ and $V$. The paper \cite{Biswas-17} established
a similar relationship for the fractional Schr\"odinger operator $(-\Delta)^{\alpha/2} + V$, $0 < \alpha < 2$, for arbitrary
dimensions $d \geq 2$, and pointed out some interesting corollaries.

In this paper we consider the problem of the location of extrema in a substantially amplified context and set of goals.
While we make use of an inspiring basic idea leading to \eqref{RS}, we see it worthwhile to be developed to a far greater
extent than attempted by the authors in \cite{RS-17}, in order to serve as the beginning of a programme of studying important
aspects of local behaviour for a whole class of equations of pure and applied interest. Specifically, our framework is the
class of non-local Schr\"odinger operators of the form
\begin{equation}
\label{nonlocSch}
H = \Psidel + V,
\end{equation}
where $\Psi$ is a so-called Bernstein function, and $V$ is a multiplication operator called potential (for details see Section 
2). Such operators have been considered from a combined perturbation theory and functional integration point of view in 
\cite{HIL12,HL-12,KKM16}, and we will discuss some motivations below. When $\Psi$ is the identity function, we get back to 
classical Schr\"odinger operators, thus this framework also allows comparison with other operators and related equations, and 
new light is shed also on classical Laplacians with or without potentials.

We will be interested in the properties of solutions of two eigenvalue problems. The first is a non-local Dirichlet-Schr\"odinger
problem for a bounded convex domain $\cD \subset \Rd$, $d \geq 1$, given by
\begin{align}
\label{dirichev}
\left\{
\begin{array}{cc}
H \varphi =  \lambda \varphi & \mbox{in $\cD$}, \\
\varphi = 0 & \, \mbox{in $\cD^c$},
\end{array}
\right.
\end{align}
in weak sense. In this case there is a countable set of eigenvalues
$$
\lambda_1^{\cD,V} < \lambda_2^{\cD,V} \leq \lambda_3^{\cD,V} \leq ...
$$
of finite multiplicities each, and a corresponding orthonormal set of eigenfunctions $\varphi_1, \varphi_2, ...
\in \Dom(H) \subset L^2(\cD)$. When $V \equiv 0$, the problem reduces to the non-local Dirichlet eigenvalue equation.
In this case the spectrum is still discrete, and we use the notation $\lambda_1^\cD, \lambda_2^\cD, ...$ for the
eigenvalues.

The second problem we consider is the eigenvalue equation in $L^2(\Rd)$, $d \geq 1$,
\begin{equation}
\label{fullev}
H\varphi = \lambda \varphi, \quad \supp V = \cK, \; \mbox{with $\cK \subset \Rd$ bounded.}
\end{equation}
In particular, this covers potential wells of depth $v > 0$, when $V = - v \Ind_\cK$, which are of basic interest.
In this case appropriate conditions will be needed on $V$ in order to have any $L^2$-eigenfunctions. The
Dirichlet-Schr\"odinger problem can also be seen as a Schr\"odinger problem in full space, where the potential equals
$V$ in $\cD$ and infinity elsewhere.

Non-local (i.e., integro-differential) equations are gaining increasing interest recently from the corners of both pure and
applied mathematics. While PDE based on the Laplacian and related elliptic operators proved to be ubiquitous in virtually
every fundamental model of dynamics for a long time, it is now recognized that a new range of effects is captured if one uses
a class of non-local operators, in which the classical Laplacian is just one special case. Much work has been done recently on
the well-posedness and regularity theory of such equations, see, e.g., \cite{CL09,ROS14} and many related references. There is
also much interest due to the fact that non-local operators are generators of L\'evy or Feller processes \cite{J,BSW}, and their
study is made possible by probabilistic and potential theory methods. On the other hand, applications to mathematical physics,
such as anomalous transport \cite{KRS08,MS11} and quantum theory \cite{D83,LS10,HIL13}, or more computationally, image reconstruction
via denoising \cite{GO09,BCM10}, to name just a few, provide a continuing incentive to the development of these ideas and techniques.

The study of non-local Schr\"odinger equations is one aspect of this work, and some primary work on developing a related
potential theory has been done in \cite{BB99,BB00}. There are many possible choices of $\Psi$ of interest in applications.
The fractional Laplacian $\Psi(-\Delta) = (-\Delta)^{\alpha/2}$, $0 < \alpha < 2$, is the most studied of them. There is
a range of exponents $\alpha$ used in anomalous transport theory, however, there are many further applications,
e.g., $\alpha = 1.3$ describes the dynamics of particles trapped in the vortices of a flow, $\alpha = 1.5$ relates with
the spatial distribution of the gravitational field generated by a cluster of uniformly distributed stars, etc. The
relativistic Laplacians $\Psi(-\Delta) = (-\Delta + m^{2/\alpha})^{\alpha/2} - m$, $m > 0$, are used to describe relativistic
or photonic quantum effects, geometric stable operators $\Psi(-\Delta) = \log(1+(-\Delta)^{\alpha/2})$, $0 < \alpha \leq 2$,
are more used in studying financial processes \cite{RM}, and so on. For a more detailed discussion we refer to \cite{KL16}.
We also note that qualitatively different spectral and analytic behaviours of $H$ and the related semigroup occur in function
of the choice of $\Psi$. For instance, for operators and related processes for which the singular integral (L\'evy jump)
kernel is polynomially or sub-exponentially decaying (e.g., fractional Laplacian), the eigenfunctions have very different
asymptotic behaviours than for exponentially or super-exponentially decaying kernels (e.g., relativistic Laplacian), and a
phase transition-like phenomenon occurs (for details see \cite[Sect. 4.4]{KL17}). All this shows that these operators, in
diverse aspects, massively differ from the classical Laplacian, and also produce spectacular differences between themselves.

Explicit solutions of eigenvalue problems for non-local Schr\"odinger operators are rare. In \cite{LM12} this has been obtained for
the operator $(-d^2/dx^2)^{1/2} + x^2$, and in \cite{DL16} for $(-d^2/dx^2)^{1/2} + x^4$, both in $L^2(\RR)$. A detailed study of
the asymptotic behaviour at infinity of the eigenfunctions for a large class of non-local Schr\"odinger operators has been
made in \cite{KL15,KL17}. Bounds, monotonicity and continuity properties for Dirichlet eigenvalues for large classes of domains
have been established in \cite{Chen-Song,CS06}, approximate solutions and detailed estimates for some non-local Dirichlet problems
in intervals, half-spaces or boxes have been presented in \cite{KKMS10,K11,K12,KKM13,KKM16}. Since an explicit computation of the
principal Dirichlet eigenvalue or eigenfunction is not available even for the simplest cases, a study of the properties of
the spectrum becomes important. Some results on the shape of eigenfunctions or solutions were obtained in \cite{BKM06,K17}, and
\cite{LR17} investigates the local behaviour of eigenfunctions for potentials wells.

Our results in this paper contribute to a study of local properties of eigenfunctions of non-local Schr\"odinger operators.
There are several reasons why information on the location of extrema of eigenfunctions is of interest, and we single out a
few here as follows:
\begin{enumerate}
\item[(i)]
\emph{Maximum principles:} Maximum principles are fundamental tools in the study of elliptic and parabolic linear, and
semilinear problems. Using our present work and the techniques developed here for non-local operators, we are able to derive
and prove elliptic and parabolic Aleksandrov-Bakelman-Pucci type estimates, Berestycki-Nirenberg-Varadhan type refined
maximum principles, anti-maximum principles in the sense of Cl\'ement-Peletier, a maximum principle for narrow domains, as
well as Liouville theorems. This is presented in detail elsewhere, see \cite{BL2}, and for the context of time-fractional
evolutions \cite{BL3}. We note that using our techniques all this could be implemented in the framework of viscosity solutions.

\item[(ii)]
\emph{Hot-spots:}
A hot-spot is a point in space where the solution of the heat equation in a bounded domain at a given time attains its maximum, and
an object of study for classical domain Laplacians has been how they move in time when Neumann or Dirichlet boundary conditions are
imposed. For Dirichlet boundary conditions, on the long run the solution increasingly takes the shape of the principal eigenfunction,
and the hot-spot becomes its maximizer. While there are several classical results on this challenging problem, we mention \cite{GJ-98},
in which the problem is studied for bounded convex sets in $\RR^2$, and the recent paper \cite{BMS-11} which obtained a lower bound
on the location of the maximum of the principal Dirichlet eigenfunction. One implication of our results is a significantly
improved bound, see a discussion in Remark \ref{hotspots} and Corollary \ref{HScorr} below.

\item[(iii)]
\emph{Torsion:} The torsion function is the solution of a specific Dirichlet boundary value problem, with interest originally derived
from mechanics and also having an important probabilistic meaning. A puzzling phenomenon is that its maximizer and the maximizer of
the principal Dirichlet eigenfunction of the Laplacian are located very near to each other, though they fail to coincide, see
\cite{BLMS,HLP} and the references therein. In our present work we also obtain a result on this for the non-local case, for a further
discussion see Remark \ref{tors}.

\item[(iv)]
\emph{Most likely location of paths:}
Since the principal eigenfunction of $H$ can be chosen to be strictly positive, by its harmonicity it can be used as a Doob $h$-transform
to construct a stochastic process obtained under the perturbation of $V$ of the subordinate Brownian motion generated by $\Psi(-\Delta)$.
In case of a classical Schr\"odinger operator this is a diffusion, while for non-local cases it is a L\'evy-type jump process with, in
general, unbounded coefficients. In both cases the maximizer of the first eigenfunction gives the mode of the stationary probability density
of this process, i.e., describes the location in space which gives the highest contribution into the distribution of paths. This is discussed
in further detail in Remark \ref{doob} below.

\item[(v)]
\emph{An application to modelling groundwater contamination:}
An application of high practical interest of anomalous transport described by non-local equations is a more realistic description of the
spread of contaminated groundwater by taking into account non-uniformities of a porous soil, see \cite{KM} and references therein. The
maximizer(s) in this case have a relevance in the localization of the highest-concentration points of the plume. Also, in this context
the study of inverse problems become important in order to control the level sets and maxima of the plume. Using our techniques we have
been able to discuss an inverse source problem in \cite[Th. 3.6]{BL3}, which is just the beginning of a series of investigations of a
practical relevance.
\end{enumerate}

To conclude, we outline the main results and highlight some technical achievements in this paper, apart from what we discussed above.
\begin{enumerate}
\item
Our key results are stated in Theorems \ref{T3.20}-\ref{T3.1}, and we will study their multiple implications involving
an interplay of probabilistic and spectral geometric aspects. Our results reproduce \eqref{RS} as a specific case, however,
apart from a far more general framework our work here goes well beyond \cite{RS-17} on several counts. One is that our expressions 
feature the symbol of the kinetic part of the operator. This allows us to understand what is behind the formulae involving a 
\emph{lower bound} on the position of extrema, and it will turn out from the probabilistic representation that it results 
from a balance of two survival times of paths of the related random process running in the domain (Remark \ref{doob}). This 
points to an underlying mechanism fundamentally involving the competition of energy versus entropy effects, and offers a 
very different perspective. A second point how we reach a different level of discussion is that we allow a large class of
potentials, including local singularities, and do not limit ourselves to bounded potentials. This has not been attempted in
\cite{RS-17}, and controlling such a possibly very ``rugged" potential landscape is not a straightforward step from bounded 
potentials. We also note that our combined analytic and probabilistic techniques developed here allow to cover general convex 
domains (see Theorem \ref{FKsemi} and Remark \ref{R3.1}), removing boundary regularity problems often encountered when using 
purely analytic means. This will also be helpful when considering maximum principles in \cite{BL2}.

\item
Apart from bounded domains we also consider potentials with compact support in full space $\Rd$, and derive predictions
on the location of extrema relative to the edge of their supports or from neighbourhoods (e.g., level sets) of the minima
of the potential, see Section 4 and specifically the key Theorems \ref{T3.2} and \ref{T3.4} below. There is very little 
information on this even for the classical and, as far as we are aware, nothing for non-local Schr\"odinger operators. We 
also note that this problem has not been addressed in \cite{RS-17}. As consequences, we observe some interesting behaviours 
dependent on whether the potential is attracting or repelling (Theorem \ref{T3.3}).

\item
As it will be seen in what follows, the localization of the extrema of Dirichlet-Schr\"odinger eigenfunctions is, roughly
speaking, an isoperimetric-type property, determined by underlying geometric principles. In Corollary \ref{FKineq} we
obtain a new Faber-Krahn type inequality for non-local Schr\"odinger operators as a direct consequence of the estimates on
the location of extrema.

\item
We also obtain a variety of geometric and probabilistic bounds on the eigenvalues in the spirit of the discussions in
\cite{BC-94,BLM-01} and references therein. In particular, we derive a lower estimate on all moments of exit times of
subordinate Brownian motion from convex domains, and further relations on eigenvalues (Corollaries \ref{C3.1} and
\ref{C3.2}).
\end{enumerate}

The remainder of this paper is organized as follows. In Section 2 we discuss some properties of Bernstein functions $\Psi$ on
which we rely throughout below when using the operators $\Psi(-\Delta)$ and related subordinate Brownian motions. In Section 3
first we establish some basic facts on the Dirichlet-Schr\"odinger eigenvalue problem, which do not seem to be available in the
literature. Next we state and prove our main results in Theorems \ref{T3.20}-\ref{T3.1}, and then discuss a number of consequences
and implications in corollaries and a string of remarks. Section 4 is devoted to operators having potentials with compact support.

\section{\textbf{Bernstein functions of the Laplacian and subordinate Brownian motions}}
Now we turn to describe the above objects formally. Denote
\begin{equation}
\label{BernLapl}
H_0 = \Psidel,
\end{equation}
where $\Psi$ is a Bernstein function given below. This operator can be defined via functional calculus by using
the spectral decomposition of the Laplacian. It is a pseudo-differential operator whose symbol is given by the
Fourier multiplier
$$
\widehat{H_0 f}(y) = \Psi(|y|^2)\widehat f(y), \quad y \in \Rd, \; f \in  \Dom(H_0),
$$
with domain $\Dom(H_0)=\big\{f \in L^2(\Rd): \Psi(|\cdot|^2) \widehat f \in L^2(\Rd) \big\}$. It follows by
general arguments that $H_0$ is a positive, self-adjoint operator with core $C_{\rm c}^\infty(\Rd)$, for
details see \cite{HIL12, SSV}.

Recall that a Bernstein function is a non-negative completely monotone function, i.e., an element of the set
$$
\mathcal B = \left\{f \in C^\infty((0,\infty)): \, f \geq 0 \;\; \mbox{and} \:\; (-1)^n\frac{d^n f}{dx^n} \leq 0,
\; \mbox{for all $n \in \mathbb N$}\right\}.
$$
In particular, Bernstein functions are increasing and concave. We will make use below of the subset
$$
{\mathcal B}_0 = \left\{f \in \mathcal B: \, \lim_{u\downarrow 0} f(u) = 0 \right\}.
$$
Let $\mathcal M$ be the set of Borel measures $\mu$ on $\RR \setminus \{0\}$ with the property that
$$
\mu((-\infty,0)) = 0 \quad \mbox{and} \quad \int_{\RR\setminus\{0\}} (y \wedge 1) \mu(dy) < \infty.
$$
Notice that, in particular, $\int_{\RR\setminus\{0\}} (y^2 \wedge 1) \mu(dy) < \infty$ holds, thus $\mu$ is a L\'evy
measure supported on the positive semi-axis. It is well-known then that every Bernstein function $\Psi \in
{\mathcal B}_0$ can be represented in the form
\begin{equation}
\Psi(u) = bu + \int_{(0,\infty)} (1 - e^{-yu}) \mu(\D{y})
\end{equation}
with $b \geq 0$, moreover, the map $[0,\infty) \times \mathcal M \ni (b,\mu) \mapsto \Psi \in {\mathcal B}_0$ is
bijective. $\Psi$ is said to be a complete Bernstein function if there exists a Bernstein function $\widetilde\Psi$
such that
$$
\Psi(u)= u^2 \mathcal{L}(\widetilde\Psi)(u), \quad u>0\,,
$$
where $\mathcal{L}$ stands for Laplace transform. It is known that every complete Bernstein function is also
a Bernstein function. Also, for a complete Bernstein function the L\'evy measure $\mu(\D{y})$ has a completely monotone
density with respect to the Lebesgue measure. The class of complete Bernstein functions is large, including important
cases such as
\begin{itemize}
\item[(i)]
$\Psi(u) = u^{\alpha/2}$, $\alpha\in(0, 2]$
\item[(ii)]
$\Psi(u) = (u+m^{2/\alpha})^{\alpha/2}-m$, $m\geq 0$, $\alpha \in (0, 2)$
\item[(iii)]
$\Psi(u) = u^{\alpha/2} + u^{\beta/2}$,  $0 < \beta < \alpha \in(0, 2]$
\item[(iv)]
$\Psi(u) = \log(1+u^{\alpha/2})$, $\alpha\in (0,2]$
\item[(v)]
$\Psi(u) = u^{\alpha/2}(\log(1+u))^{\beta/2}$, $\alpha \in (0,2)$, $\beta \in (0, 2-\alpha)$
\item[(vi)]
$\Psi(u) = u^{\alpha/2} (\log(1+u))^{-\beta/2}$, $\alpha \in (0,2]$, $\beta \in [0,\alpha)$.
\end{itemize}
On the other hand, the Bernstein function $\Psi(u) = 1 - e^{-u}$ is not a complete Bernstein function. For a detailed
discussion we refer to the monograph \cite{SSV}.

Bernstein functions are closely related to subordinators, and we will use this relationship below. Recall that a
one-dimensional L\'evy process $\pro S$ on a probability space $(\Omega_S, {\mathcal F}_S, \mathbb P_S)$ is called a
subordinator whenever it satisfies $S_s \leq S_t$ for $s \leq t$, $\mathbb P_S$-almost surely.
A basic fact is that the Laplace transform of a subordinator is given by a Bernstein function, i.e.,
\begin{equation}
\label{lapla}
\mathcal{L}(S_t)(u)=\ex_{\mathbb P_S} [e^{-uS_t}] = e^{-t\Psi(u)}, \quad t \geq 0,
\end{equation}
holds, where $\Psi \in {\mathcal B}_0$. In particular, there is a bijection between the set of subordinators on a given
probability space and Bernstein functions with vanishing right limits at zero; to emphasize this, we will occasionally
write $\pro {S^\Psi}$ for the unique subordinator associated with Bernstein function $\Psi$. Corresponding to the
examples above, the related processes are (i) $\alpha/2$-stable subordinator, (ii) relativistic
$\alpha/2$-stable subordinator, (iii) sums of independent subordinators of different indeces, (iv) geometric $\alpha/2$-stable
subordinators (specifically, the Gamma-subordinator for $\alpha = 2$), etc. The non-complete Bernstein function mentioned
above describes the Poisson subordinator.

Let $\pro B$ be $\Rd$-valued a Brownian motion on Wiener space $(\Omega_W,{\mathcal F}_W, \mathbb P_W)$, running twice
as fast as standard $d$-dimensional Brownian motion, and let $\pro {S^\Psi}$ be an independent subordinator. The random
process
$$
\Omega_W \times \Omega_S \ni (\omega,\varpi) \mapsto B_{S_t(\varpi)}(\omega) \in \Rd
$$
is called subordinate Brownian motion under $\pro {S^\Psi}$. For simplicity, we will denote a subordinate Brownian motion
by $\pro X$, its probability measure for the process starting at $x \in \Rd$ by $\mathbb P^x$, and expectation with respect
to this measure by $\ex^x$. Every subordinate Brownian motion is a L\'evy process, with infinitesimal generator $H_0
= \Psidel$. Subordination then gives the expression
\begin{equation}
\label{subord}
\Prob (X_t \in E) = \int_0^\infty \Prob_W(B_s \in E)\Prob_S(S_t \in \D s),
\end{equation}
for every measurable set $E$.

Our main concern in what follows are some properties in the bulk of functions satisfying the eigenvalue equations
(\ref{dirichev}-\ref{fullev}) in weak sense. Specifically, we will focus on the location of extrema of eigenfunctions
by using a stochastic representation of the solutions, featuring subordinate Brownian motion.

\section{\textbf{Constraints on the location of extrema}}
\subsection{The Dirichlet-Schr\"odinger problem}

In this section we assume $\cD \subset \Rd$ to be a bounded open set. Consider a complete Bernstein function $\Psi$ and
the operator $H_0 = \Psi(-\Delta)$ on $L^2(\Rd)$. The Dirichlet eigenvalue problem \eqref{dirichev} for $V \equiv 0$ has
been studied in various papers, including \cite{Chen-Song,CS06,K11,K12,KKM16}. In particular, the following holds; for
details we refer to \cite{KKM16} and \cite{FOT}.
Consider the space $C_{\rm c}^\infty(\cD)$, and define the operator $H_0^\cD$ given by the Friedrichs extension of
$H_0|_{C_{\rm c}^\infty(\cD)}$. It can be shown that the form-domain of $H_0^\cD$ contains those functions that are in
the form-domain of $H_0$ and are almost surely zero outside of $\cD$. Furthermore, the operator $-H_0^\cD$ generates the
strongly continuous operator semigroup
$$
T^\cD_t = e^{-tH_0^\cD}, \quad t \geq 0.
$$
Each operator $T^\cD_t$ is a contraction on $L^p(\cD)$, for every $p \geq 1$, including $p=\infty$. When $\Psi$ is unbounded,
$T^\cD_t$ is a contraction also on $C_0(\cD)$. If $e^{-t\Psi(|x|^2)} \in L^1(\cD)$ for $t > 0$, then each $T_t^\cD$ is a
Hilbert-Schmidt operator, in particular, they are compact. Hence, by general theory, the equation
$$
T^\cD_t \varphi = e^{-\lambda t} \varphi, \quad t > 0,
$$
is solved by a countable set of eigenvalues $\lambda_1^\cD < \lambda_2^\cD \leq \lambda_3^\cD \leq \cdots \to \infty$,
of finite multiplicity each, corresponding to an orthonormal set of eigenfunctions $\varphi_1^\cD, \varphi_2^\cD, ...
\in L^2(\cD)$. The principal eigenvalue $\lambda_1^\cD$ has multiplicity one, and the principal eigenfunction
$\varphi_1^\cD$ has a strictly positive version, which we will adopt throughout.
Moreover, due to strong continuity of
the semigroup, the spectrum is independent of $t > 0$, in particular, since $-H_0^\cD$ is the infinitesimal generator
of $\semi {T^\cD}$, the same eigenvalues and eigenfunctions also solve \eqref{dirichev} for $V \equiv 0$. It is also
known that $\semi {T^\cD}$ is the Markov semigroup of killed subordinate Brownian motion, i.e., we have
\begin{equation}
\label{killed}
T^\cD_t f(x) = \ex^x[f(X_t)\Ind_{\{\uptau_D > t\}}], \quad x \in \cD, \, t > 0, \, f\in L^2(\cD),
\end{equation}
where
\begin{equation}
\label{exit}
\uptau_\cD = \inf\{t > 0: \, X_t \not\in \cD\}
\end{equation}
is the first exit time of $\pro X$ from $\cD$.

In contrast to the pure Dirichlet problem, the Dirichlet-Schr\"odinger problem \eqref{dirichev} with $V \not\equiv 0$
has been much less studied and the counterparts of the above facts do not seem to be readily available in the literature.
Let $V \in L^\infty(\Rd)$ and consider $H = \Psi(-\Delta) + V$. This operator is bounded from below, and self-adjoint on
the dense domain $\Dom(\Psi(-\Delta)) \subset L^2(\Rd)$, with core $C_{\rm c}^\infty(\Rd)$. For a bounded open set $\cD
\subset \Rd$ we define the non-local Schr\"odinger operator $H^{\cD,V}$ as the Friedrichs extension of
$H|_{C_{\rm c}^\infty(\cD)}$. Also, define
\begin{equation}
\label{killedV}
T^{\cD,V}_t f(x) = \ex^x[e^{-\int_0^t V(X_s)ds}f(X_t)\Ind_{\{\uptau_D > t\}}], \quad x \in \cD, \, t > 0, \, f\in L^2(\cD).
\end{equation}
We denote $L^p$ norm on $\cD$ by $\norm{\cdot}_{p, \cD}$, whereas $\norm{\cdot}_p$ denotes the $L^p$ norm on $\Rd$. We show
the following properties.
\begin{lemma}
\label{FKsemi}
Consider the operators $H^{\cD,V}$ and $T_t^{\cD,V}$, $t > 0$, and let $\pro {S^\Psi}$ be the subordinator corresponding to
the Bernstein function $\Psi \in \mathcal B_0$. Suppose that $\Psi$ satisfies the Hartman-Wintner condition
\begin{equation}
\label{HW}
\lim_{|u|\to \infty}\frac{\Psi(\abs{u}^2)}{\log\abs{u}}=\infty.
\end{equation}
The following hold:
\begin{enumerate}
\item[(i)]
Every $T_t^{\cD,V}$ is an integral operator and we have the representation
\begin{eqnarray}\label{EL4.3A}
T^{\cD,V}_t f(x)
&=&
\int_{\cD}\ex^0_{\Prob_S}
\left[p_{S^\Psi_t}(x-y) \ex^{x,y}_{0,S^\Psi_t}[e^{-\int_0^t V(B_{S^\Psi_s})ds}]\Ind_{\{\uptau_D > t\}}\right]f(y)dy \\
&=&
\int_{\cD} T^{\cD,V}(t,x,y) f(y)dy, \quad x \in \cD, \, t \geq 0, \, f\in L^2(\cD), \nonumber
\end{eqnarray}
where $p_t(x) = (4\pi t)^{-d/2}e^{-\frac{|x|^2}{4t}}$, and $\ex^{x,y}_{0,S^\Psi_t}$ denotes expectation with respect
to the Brownian bridge measure from $x$ at time 0 to $y$ at time $s$, evaluated at random time $s=S^\Psi_t$. Furthermore,
for every $t>0$, we have $T^{\cD, V}(t, x, y)= T^{\cD, V}(t, y, x)$ for all $x, y\in\Rd$.
\item[(ii)]
$\semi {T^{\cD,V}}$ is a strongly continuous semigroup on $L^p(\cD)$, $p \geq 1$, with infinitesimal generator $-H^{\cD,V}$.
\item[(iii)]
Every $T_t^{\cD,V}$ is a Hilbert-Schmidt operator on $L^2(\cD)$, for all $t > 0$.
\item[(iv)]
The map $(0, \infty) \times \cD \times \cD \ni (t,x,y) \mapsto T^{\cD,V}(t,x,y) \in \RR$ is continuous.
\item[(v)]
If $\cD$ is a bounded domain with outer cone property, then for every $f\in L^\infty(\cD)$ we have that $T^{\cD, V}_t f$ continuous
in $\bar\cD$ with value $0$ on the boundary, for every $t>0$.
\end{enumerate}
\end{lemma}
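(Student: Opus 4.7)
The plan is to treat the five items in order, since each either follows from the preceding or uses a similar probabilistic-analytic tool. For (i), I would condition on the subordinator $\pro{S^\Psi}$, which is independent of the driving Brownian motion $\pro{B}$. Conditional on $S_t^\Psi=s$, the subordinate process at time $t$ equals $B_s$, so the Feynman-Kac expression in \eqref{killedV} becomes $\ex^x\bigl[e^{-\int_0^s V(B_u)\,du}f(B_s)\Ind_{\{\uptau_\cD^B>s\}}\bigr]$, where $\uptau_\cD^B$ is the first exit time of $B$ from $\cD$. Splitting this Brownian expectation through the Brownian bridge decomposition (pinning $B_0=x$, $B_s=y$ and integrating against $p_s(x-y)\,dy$) and then integrating against the law of $S_t^\Psi$ yields the stated representation. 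Symmetry $T^{\cD,V}(t,x,y)=T^{\cD,V}(t,y,x)$ follows from time-reversal invariance of the Brownian bridge together with the fact that the event $\{$bridge stays in $\cD\}$ is preserved under time reversal.

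For (ii), the semigroup law is a consequence of the Markov property and the multiplicativity of the Feynman-Kac exponential, while strong continuity on $L^p(\cD)$ reduces, via the pointwise bound $\abs{T_t^{\cD,V}f}\leq e^{t\norm{V}_\infty}T_t^\cD\abs{f}$, to the strong continuity of the killed subordinate Brownian semigroup recalled earlier. To identify the generator as $-H^{\cD,V}$, I would compare quadratic forms: the Dirichlet form associated with $\semi{T^{\cD,V}}$ agrees on $C_{\rm c}^\infty(\cD)$ with the form of $H$ via Dynkin's formula, and uniqueness of the Friedrichs extension (see \cite{FOT}) then forces the two generators to coincide.

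For (iii) and (iv), the Hartman-Wintner condition \eqref{HW} guarantees that the subordinate Brownian motion has a bounded continuous transition density $p_t^\Psi$ for every $t>0$, whence
\begin{equation*}
\abs{T^{\cD,V}(t,x,y)}\leq e^{t\norm{V}_\infty}p_t^\Psi(x-y)\leq e^{t\norm{V}_\infty}\sup_{z\in\Rd}p_t^\Psi(z)<\infty,
\end{equation*}
and the Hilbert-Schmidt property on the bounded domain $\cD$ is immediate. Joint continuity of $(t,x,y)\mapsto T^{\cD,V}(t,x,y)$ follows from the Chapman-Kolmogorov factorization $T^{\cD,V}(t,x,y)=\int_\cD T^{\cD,V}(t/2,x,z)T^{\cD,V}(t/2,z,y)\,dz$ combined with the uniform kernel bound and the $L^2$ continuity of $x\mapsto T^{\cD,V}(t/2,x,\cdot)$ coming from (ii), closed up by dominated convergence.

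The main obstacle is (v). For a boundary point $x_0\in\partial\cD$ and $x$ approaching $x_0$, I need to show $\Prob^x(\uptau_\cD\leq t)\to 1$. The outer cone hypothesis gives a truncated cone $\sC\subset\cD^c$ with apex at $x_0$, and one shows, using scaling and the positivity and continuity of $p_t^\Psi$ on compact neighbourhoods, that subordinate Brownian motion started near $x_0$ visits $\sC$ within time $t$ with probability tending to $1$. This is the step where the cone geometry is essential and where one must be careful about the possibility of jumps over $\sC$; a standard hitting estimate for the subordinate process handles this. Dominated convergence applied to the representation in (i) then yields $T_t^{\cD,V}f(x)\to 0$ as $x\to x_0$, which combined with the interior continuity from (iv) proves (v) and completes the lemma.
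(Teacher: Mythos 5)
Your overall strategy tracks the paper's, but two steps contain genuine gaps. In part (i), the symmetry $T^{\cD,V}(t,x,y)=T^{\cD,V}(t,y,x)$ does not follow from time-reversal invariance of the Brownian bridge alone. After conditioning on the subordinator, the Feynman--Kac functional and the killing indicator are sampled along the random times $S^\Psi_s$, $s\in[0,t]$; reversing $s\mapsto t-s$ replaces these sampling times by $S^\Psi_{t-s}$, which are \emph{not} the reversed bridge times $S^\Psi_t-S^\Psi_s$, so for a fixed subordinator path the reversed functional is not of the required form and bridge reversal by itself produces nothing. What is actually needed is the joint identity in law $\bigl(Z^{x,y}_{S^\Psi_{t-\cdot}}\big|_{[0,t]},\,S^\Psi_t\bigr)\Dst\bigl(Z^{y,x}_{S^\Psi_{\cdot}}\big|_{[0,t]},\,S^\Psi_t\bigr)$, which rests on the time-reversal property of the subordinator as a L\'evy process, $(L_{t-\cdot},L_t)\Dst(L_t-L_\cdot,L_t)$; establishing this is the main content of the paper's proof of (i) and is missing from your sketch (it is fillable, but not by the argument you give).

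In part (iv) the decisive step is unsupported: you invoke the ``$L^2$ continuity of $x\mapsto T^{\cD,V}(t/2,x,\cdot)$ coming from (ii)'', but (ii) is strong continuity in $t$ of the semigroup on $L^p(\cD)$ and yields no spatial regularity of the kernel. Spatial continuity is precisely the nontrivial point here: in \eqref{EL4.3A} the endpoint $x$ enters the indicator $\Ind_{\{\uptau_\cD>t\}}$ through the bridge measure, and expectations of indicators are not automatically continuous in the pinning point without an argument excluding mass on boundary-grazing paths or smoothing the dependence. The paper resolves this by splitting off an initial time $\varepsilon$ so that the $x$-dependence appears through the explicit continuous factor $p_{S^\Psi_\varepsilon}(x-z)$ integrated in $z$, and then letting $\varepsilon\to 0$ with uniform convergence on compact subsets of $\cD$ (following Sznitman); some smoothing device of this kind is indispensable and absent from your proposal, so (iv) is incomplete, and (v) inherits the gap insofar as it relies on interior continuity from (iv). Your boundary argument itself in (v) --- reduction to $\Prob^{x}(\uptau_\cD>t)\to 0$ via the outer cone and regularity of boundary points --- matches the paper, as do the remaining parts: the representation in (i) by conditioning on the subordinator, (ii) via the Markov/Chapman--Kolmogorov structure (your domination by the killed semigroup is a fine elementary route to strong continuity), and (iii) via the Hartman--Wintner condition \eqref{HW} giving a bounded density together with boundedness of $\cD$.
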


\begin{proof}
(i)\, \eqref{EL4.3A} follows from a standard conditioning argument, see \cite[Lem.~3.4]{HL-12}. We define
$$
T^{\cD, V}(t, x, y)=
\ex^0_{\Prob_S} \left[p_{S^\Psi_t}(x-y)
\ex^{x,y}_{0,S^\Psi_t}\left[e^{-\int_0^t V(B_{S^\Psi_s})ds}\Ind_{\{\uptau_{\cD} > t\}}\right]\right]\,,
$$
and show that
\begin{equation}\label{EL4.3B}
T^{\cD, V}(t, x, y)=T^{\cD, V}(t, y, x), \quad \text{for}\; t>0\,.
\end{equation}
Consider the Brownian bridge on the interval $[0, S^\Psi_t]$ starting with $x$ and ending at $y$ given by
$$
Z^{x, y}_s = (1-\frac{s}{S^\Psi_t}) x + \frac{s}{S^\Psi_t} y +  B_s - \frac{s}{S^\Psi_t} B_{S^\Psi_t}\,,
$$
where $\pro B$ is the Brownian motion running twice as fast as the standard Brownian motion, independent of the
subordinator $\pro {S^\Psi}$. A change of variable gives
$$
\int_0^t V(Z^{x, y}_{S^\Psi_s})\, \D{s} = \int_0^t V(Z^{x, y}_{S^\Psi_{t-s}})\, \D{s}\,,
$$
and we also have
$$
Z^{x, y}_{S^\Psi_s}\in \cD , \; \forall \, s\in[0, t] \;\;\Longleftrightarrow \;\;Z^{x, y}_{S^\Psi_{t-s}}\in \cD ,
\; \forall \, s\in[0, t].
$$
Therefore to show \eqref{EL4.3B} we only need to show that
\begin{equation}\label{EL4.3C}
\Bigl(Z^{x, y}_{S^\Psi_{t-\cdot}}\Big|_{[0, t]}, S^\Psi_t\Bigr) \Dst
\Bigl(Z^{y, x}_{S^\Psi_{\cdot}}\Big|_{[0, t]}, S^\Psi_t \Bigr).
\end{equation}
This can be shown by using the  fact that for any L\'evy process $\pro L$ starting at zero we have
\begin{equation}\label{EL4.3D}
(L_{t-\cdot}, L_t)\Dst (L_t- L_\cdot, L_t)\,.
\end{equation}
First we show \eqref{EL4.3C} using \eqref{EL4.3D}. Since the Brownian motion is independent of $\pro{S^\Psi}$, we get
the following equalities in distribution
\begin{align*}
Z^{x, y}_{S^\Psi_{t-\cdot}} & \Dst (1-\frac{S^\Psi_t -S^\Psi_\cdot}{S^\Psi_t}) x + \frac{S^\Psi_t -S^\Psi_\cdot}{S^\Psi_t} y
+  B_{S^\Psi_t -S^\Psi_\cdot} - \frac{S^\Psi_t -S^\Psi_\cdot}{S^\Psi_t} B_{S^\Psi_t}
\\
&\Dst \frac{S^\Psi_\cdot}{S^\Psi_t} x + (1-\frac{S^\Psi_\cdot}{S^\Psi_t}) y - B_{S^\Psi_\cdot} + \frac{S^\Psi_\cdot}{S^\Psi_t} B_{S^\Psi_t}
\\
&\Dst \frac{S^\Psi_\cdot}{S^\Psi_t} x + (1-\frac{S^\Psi_\cdot}{S^\Psi_t}) y + B_{S^\Psi_\cdot} - \frac{S^\Psi_\cdot}{S^\Psi_t} B_{S^\Psi_t}
= Z^{y, x}_{S^\Psi_{\cdot}}\,.
\end{align*}
This proves \eqref{EL4.3C}. Next we come to \eqref{EL4.3D}. It suffices to show that the finite dimensional distributions coincide.
Consider $t>s_1>s_2>\cdots>s_k\geq 0$ and $\xi_i\in\RR$ for $i=1, \ldots, k+1$. Then it is seen that
\begin{align*}
&\xi_1 L_{t-s_1} + \ldots + \xi_k L_{t-s_k} + \xi_{k+1}L_t
\\
&\quad = \sum_{i\geq 1}\xi_i L_{t-s_1} + \sum_{i\geq 2}^{k+1}\xi_i (L_{t-s_2} -L_{t-s_1}) + \cdots +
(\xi_k+\xi_{k+1}) (L_{t-s_{k}} -L_{t-s_{k-1}}) + \xi_{k+1}(L_{t} -L_{t-s_{k}})
\end{align*}
and
\begin{align*}
&\xi_1 (L_{t}-L_{s_1}) + \ldots + \xi_k (L_{t}-L_{s_k}) + \xi_{k+1} L_t
\\
&\quad = \sum_{i\geq 1}\xi_i (L_{t}-L_{s_1}) + \sum_{i\geq 2}^{k+1}\xi_i (L_{s_1} -L_{s_2}) + \cdots +
(\xi_k+\xi_{k+1}) (L_{s_{k+1}} -L_{s_{k}}) + \xi_{k+1} L_{s_k}\,.
\end{align*}
On the other hand,
$$
\Bigl( L_{t-s_1}, L_{t-s_2} -L_{t-s_1}, \cdots,  L_{t} -L_{t-s_{k}}\Bigr)\Dst
\Bigl(L_{t}-L_{s_1}, L_{s_1} -L_{s_2},\cdots,  L_{s_k}\Bigr)\,.
$$
Thus $(L_{t-s_1}, \cdots, L_{t-s_k}, L_t)$ has the same characteristic function as $(L_{t}-L_{s_1}, \cdots,L_{t}-L_{s_k}, L_t)$,
implying \eqref{EL4.3D}.

\medskip

(ii) We establish the Chapman-Kolmogorov relation
\begin{equation}\label{EL4.3E}
T^{\cD, V}(t+s, x, y) = \int_{\cD} T^{\cD, V}(t, x, u) T^{\cD, V}(s, u, y) \, \D{u},
\quad t, s>0, \, x, y\in\Rd\,.
\end{equation}
Denote
$$
\Xi(r, z, y)= \ex^{z, y}_{0, S^\Psi_r}\left[ e^{-\int_0^r V(Z_u)\, \D{u}} \Ind_{\{\uptau_D>r\}}\right],
$$
where $\pro Z$ denotes the Brownian bridge as defined above. Let $\pro{\widetilde{S}^\Psi}$ be a subordinator
given by Bernstein function $\Psi$, independent of $S^\Psi, B, Z$. Then we have
\begin{align*}
&\ex^0_{\Prob_S} \left[p_{S^\Psi_{t+s}}(x-y) \ex^{x,y}_{0,S^\Psi_{t+s}}\left[e^{-\int_0^{t+s}
V(Z_{S^\Psi_u})\D{u}}\Ind_{\{\uptau_{\cD} > t+s\}}\right]\right]
\\
&=
\ex^0_{\Prob_S} \left[p_{S^\Psi_{t+s}}(x-y) \ex^{x,y}_{0,S^\Psi_{t+s}}
\left[\Ind_{\{\uptau_{\cD}>t \}} e^{-\int_0^{t} V(Z_{S^\Psi_u})\D{u}}
\, \ex^{Z_{S^\Psi_t}, y}_{0, S^\Psi_{t+s}-S^\Psi_{t}}\left[e^{-\int_0^{s} V(Z_{S^\Psi_{u+t}-S^\Psi_t})\D{u}}
\Ind_{\{\uptau_{\cD} > s\}}\right]\right]\right]
\\
&=
\ex^0_{\Prob_S} \left[p_{S^\Psi_{t} + \widetilde{S}^\Psi_s}(x-y) \ex^{x,y}_{0,S^\Psi_{t} + \widetilde{S}^\Psi_s}
\left[\Ind_{\{\uptau_{\cD}>t \}} e^{-\int_0^{t} V(Z_{S^\Psi_u})\D{u}}
\, \ex^{Z_{S^\Psi_t}, y}_{0, \widetilde{S}^\Psi_s}\left[e^{-\int_0^{s} V(Z_{\widetilde{S}^\Psi_{u}})\D{u}}
\Ind_{\{\uptau_{\cD} > s\}}\right]\right]\right]
\\
&=
\ex^0_{\Prob_S} \left[p_{S^\Psi_{t} + \widetilde{S}^\Psi_s}(x-y) \ex^{x,y}_{0,S^\Psi_{t} +
\widetilde{S}^\Psi_s}\left[\Ind_{\{\uptau_{\cD} >t \}} e^{-\int_0^{t} V(Z_{S^\Psi_u})\D{u}} \,
\Xi(\widetilde{S}^\Psi_s, Z_{S^\Psi_t}, y)\right]\right]
\\
&=
\ex^0_{\Prob_S} \left[p_{S^\Psi_{t} + \widetilde{S}^\Psi_s}(x-y) \ex^{x}_{\Prob_W}\left[\Ind_{\{\uptau_{\cD}>t \}}
e^{-\int_0^{t} V(B_{S^\Psi_u})\D{u}} \, \Xi(\widetilde{S}^\Psi_s, B_{S^\Psi_t}, y)
p_{\widetilde{S}^\Psi_s}(B_{S^\Psi_t}-y)\frac{1}{p_{S^\Psi_{t} + \widetilde{S}^\Psi_s}(x-y)}\right]\right]
\\
&=
\ex^0_{\Prob_S} \left[ \ex^{x}_{\Prob_W}\left[\Ind_{\{\uptau_{\cD}>t \}} e^{-\int_0^{t} V(B_{S^\Psi_u})\D{u}}
\, \Xi(\widetilde{S}^\Psi_s, B_{S^\Psi_t}, y) p_{\widetilde{S}^\Psi_s}(B_{S^\Psi_t}-y)\right]\right]
\\
&=
\ex^0_{\Prob_S} \left[ \ex^{x}_{\Prob_W}\left[\Ind_{\{\uptau_{\cD}>t \}} e^{-\int_0^{t} V(B_{S^\Psi_u})\D{u}}
\, T^{\cD, V}(s, X_t, y)\right]\right]
\\
&=
\int_{\cD} T^{\cD, V}(t, x, u) T^{\cD, V}(s, u, y) \, \D{u},
\end{align*}
where the first equality follows from the Markov property of Brownian bridge, in the fourth line we used
\cite[Prop. ~A.1]{Sznitman}, and the sixth line follows by taking expectation with respect to $\pro{\widetilde{S}^\Psi}$.
Strong continuity follows along the line of \cite[Prop.~3.3]{Sznitman}.

\medskip

(iii) The symmetry of $T^{\cD, V}(t,x,y)$ implies that $T^{\cD, V}_t$ is a self-adjoint operator on $L^2(\cD)$. Let
$q_t(x, y)$ be the transition density of $\pro X$. Then the Hartman-Wintner condition \eqref{HW} implies that for every
$t>0$, $q_t(\cdot)$ is bounded and continuous \cite{HW-42,KS13} and therefore, $q_t(x, \cdot)\in L^2(\Rd)$. Indeed,
for $t>0$,
$$
q_{2t}(x, x)=\int_{\Rd} q_t(x-y) q_t(y-x)\, \D{y}= \int_{\Rd} q^2_t(x-y) \D{y}<\infty.
$$
The transition density for the process $\pro X$ killed upon the first exit from $\cD$ is given by Hunt's formula
$$
q^{\cD}_t(x, y)= q_t(x, y)-\ex^x\left[q_{t-\uptau_{\cD}}(X_{\uptau_{\cD}}, y)\Ind_{\{t>\uptau_{\cD}\}}\right]\,
\quad t>0, \; x, y\in\Rd\,.
$$
In particular, $q^{\cD}_t(x, y)\leq  q_t(x, y)\,$. Since $V$ is bounded, we obtain
\begin{align*}
\abs{T^{\cD, V}_t f(x)} &\leq e^{t \norm{V}_\infty} \int_{\cD} |f(y)| q^{\cD}_t(x, y)\D{y}
\leq e^{t \norm{V}_\infty} \norm{f}_{2,\cD} \, \norm{q_t(x, \cdot)}_{2}\,.
\end{align*}
Note that $\norm{q_t(x, \cdot)}_{2}$ does not depend on $x$. Therefore
$$
\int_{\cD \times \cD} (T^{\cD, V}(t, x, y))^2\, \D{y}\, \D{x}\leq C_t,
$$
with a constant $C_t > 0$, implying that $T^{\cD, V}_t$ is a Hilbert-Schmidt operator.

\medskip

(iv) We claim that for every $t>0$ and $y\in \cD$,
\begin{equation}\label{EL4.3F}
x \mapsto T^{\cD, V}(t, x, y) \quad \text{is continuous in}\; \cD.
\end{equation}
To show \eqref{EL4.3F}, write
\begin{align*}
T^{\cD, V}_\varepsilon(t, x, y)
&=
\ex^0_{\Prob_S}\left[\int_{\Rd}p_{S^\Psi_\varepsilon}(x-z) p_{\widetilde{S}^\Psi_{t-\varepsilon}}(z-y) \Xi(t-\varepsilon, z, y)\right]
\\
&=
\ex^0_{\Prob_S}\left[\int_{\cD}p_{S^\Psi_\varepsilon}(x-z) p_{\widetilde{S}^\Psi_{t-\varepsilon}}(z-y) \Xi(t-\varepsilon, z, y)\right]
\\
&=
\ex^0_{\Prob_S}\left[\int_{\Rd}p_{S^\Psi_\varepsilon+\widetilde{S}^\Psi_{t-\varepsilon}}(x-y)  \ex^{x, y}\left[e^{-\int_\varepsilon^t V(Z_u)\, \D{u}}
\Ind_{\{\uptau_{\cD} \circ\sigma_\varepsilon>t-\varepsilon\}}\right]\right]
\\
&= \ex^0_{\Prob_S}\left[\int_{\Rd}p_{S^\Psi_t}(x-y)  \ex^{x, y}\left[e^{-\int_\varepsilon^t V(Z_u)\, \D{u}}
\Ind_{\{\uptau_{\cD}\circ\sigma_\varepsilon>t-\varepsilon\}}\right]\right],
\end{align*}
where $\sigma_\varepsilon$ denotes the $\varepsilon$-shift operator, and the third line above follows from \cite[Cor.~A.2]{Sznitman}.
It is straightforward to see that \eqref{EL4.3F} holds for $T^{\cD, V}_\varepsilon$. On the other hand, $T^{\cD, V}_\varepsilon(t, \cdot, y)$
converges to $T^{\cD, V}(t, \cdot, y)$ as $\varepsilon\to 0$, uniformly on the compact subsets of $\cD$, see for example,
\cite[eq. (3.21)]{Sznitman}. This proves \eqref{EL4.3F}. The proof of (iv) can be completed employing a similar argument as in
\cite[Prop.~3.5]{Sznitman} combining \eqref{EL4.3E}, \eqref{EL4.3F} and (ii).

\medskip

Finally we prove (v). Denote $\tilde f(t,x)=T^{\cD, V}_t f(x)$. In view of (iv) it is enough to show that
for $x_n\to z\in\partial \cD$ we have
\begin{equation}\label{EL4.3G}
\lim_{n\to\infty}\abs{\tilde f(t,x_n)}=0.
\end{equation}
Since $f$ and $V$ are bounded, we obtain from \eqref{killedV} that
$$
\abs{\tilde f(t,x_n)}\leq e^{\norm{V}_\infty t} \norm{f}_\infty \Prob^{x_n} (\uptau_{\cD} >t).
$$
Since $z\in\cD$ is regular, see the proof of \cite[Lem.~2.9]{BGR15}, we have
$$
\lim_{x_n\to z}\Prob^{x_n} (\uptau_{\cD}>t)=0.
$$
By combining the above two equalities \eqref{EL4.3G} follows.
\end{proof}

\begin{remark}\label{R3.1}
We note that Lemma \ref{FKsemi} can be obtained also for $\Psi$-Kato class potentials, which may have local
singularities (see below).
Also, further (such as contractivity, positivity improving etc) properties of $\semi {T^{\cD, V}}$ can be shown, which
is left to the reader. The lemma can further be extended for other non-local Schr\"odinger operators, involving more
general isotropic L\'evy processes.
\end{remark}

From Lemma \ref{FKsemi} it then follows that the Dirichlet-Schr\"odinger eigenvalue equation \eqref{dirichev} is
solved by a countable set of eigenvalues $\lambda_1^{\cD,V} < \lambda_2^{\cD,V} \leq \lambda_3^{\cD,V}\leq \cdots
\to \infty$ and a corresponding orthonormal set of $L^2(\cD)$-eigenfunctions, such that the principal eigenvalue
is simple and the corresponding principal eigenfunction has a strictly positive version.

\subsection{The location of extrema}
In the remaining part of this article we shall assume that $\cD$ is a bounded, convex set.
In the following we will use a class of potentials, which are general enough to contain many interesting cases
(such as Coulomb-type potentials), while being naturally suitable for defining Feynman-Kac semigroups. Consider
the set of functions
\begin{align}
\label{eq:Katoclass}
\sK^\Psi = \Big\{f: \RR\to \Rd: \, \mbox{$f$ is Borel measurable and} \,
\lim_{t \downarrow 0} \sup_{x \in \Rd} \ex^x \Big[\int_0^t |f(X_s)| ds\Big] = 0\Big\}.
\end{align}
We say that the potential $V: \Rd \to \RR$ belongs to $\Psi$-Kato class whenever it satisfies
$$
\quad V_- \in \sK^\Psi \quad \text{and} \quad V_+ \in \sK^\Psi_{\rm loc}, \quad \text{with} \quad V_+ =
\max\{V,0\}, \; V_- = \min\{V,0\},
$$
where $V_+ \in \sK^\Psi_{\rm loc}$ means that $V_+ 1_\cC \in \sK^\Psi$ for all compact sets $\cC \subset \Rd$,
and $\pro X$ is the L\'evy process generated by $\Psi(-\Delta)$. It is direct to see that $L^{\infty}_{\rm loc}(\Rd)
\subset \sK_{\rm loc}^\Psi$, moreover, by stochastic continuity of $\pro X$ also $\sK_{\rm loc}^\Psi \subset
L^1_{\rm loc}(\Rd)$. By standard arguments based on Khasminskii's Lemma, for a $\Psi$-Kato class potential $V$ it
follows that there exist suitable constants $C_1(\Psi,V), C_2(\Psi,V) > 0$ such that
\begin{align}
\label{khasmin}
\sup_{x \in \Rd} \ex^x\left[e^{-\int_0^t V(X_s)\D{s}}\right] \leq \sup_{x \in \Rd}
\ex^x\left[e^{\int_0^t V_-(X_s)\D{s}}\right] \leq C_1 e^{C_2t}, \quad t>0.
\end{align}
For further details we refer to \cite[Sect. 4]{HIL12} and \cite{LHB}.

For Bernstein functions we will use the following property repeatedly below, which has been introduced in \cite{BGR14b}.
\begin{assumption}
\label{WLSC}
The function is said to satisfy a weak local scaling (WLSC) property with parameters $\mu > 0$ and
$\underline{c}\in(0, 1]$, if
$$
\Psi(\gamma u) \;\geq\; \underline{c}\, \gamma^\mu \Psi(u), \quad u>0, \; \gamma\geq 1.
$$
\end{assumption}
\noindent
We will show some typical examples of Bernstein functions satisfying Assumption \ref{WLSC} further below in this
section.

Now we present two expressions of the main result of this section. The first uses $\Psi$-Kato class potentials $V$ and a
restricted class of $\Psi$, the second uses a more general class of Bernstein functions $\Psi$ and bounded potentials.

\begin{theorem}\label{T3.20}
Let $\Psi\in \mathcal B_0$ satisfy Assumption \ref{WLSC} with $\mu > 0$ and $\underline{c}\in(0, 1]$. Let $V \in
\sK^\Psi$ be a $\Psi$-Kato class potential, with $V^-\in L^p(\Rd)$, $p>\frac{d}{2\mu}$. Also, let $\varphi$ be a
non-zero solution of \eqref{dirichev} at eigenvalue $\lambda^{V,\cD}$. Assume that $|\varphi|$ attains a global maximum
at $x^*\in\cD$, and denote $r =\dist(x^*, \partial\cD)$ and $\eta=1-\frac{d}{2\mu p}$. Then there exists a constant
$\Theta_1 > 0$, dependent on $d$, $\mu$, $\underline{c}$, $\eta$, $\inrad \cD$, and a constant $\Theta_2 > 0$,
dependent on $\eta$ only, such that
\begin{equation}\label{ET3.2A}
\Theta_1\norm{V^-}^{\nicefrac{1}{\eta}}_{p}-\inf_{\cD}V^+ + \lambda^{V,\cD} \geq \Theta_2 \,\Psi(r^{-2})\,.
\end{equation}
\end{theorem}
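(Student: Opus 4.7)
The plan is to apply the Feynman--Kac formula from \Cref{FKsemi} at the maximizer $x^*$, decompose the resulting functional via Cauchy--Schwarz into an exponential moment of $V^-$ and an exit-time probability, and exploit the convexity of $\cD$ together with the WLSC scaling of $\pro X$ to produce the factor $\Psi(r^{-2})$.

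Assume without loss of generality that $\varphi(x^*) = \|\varphi\|_\infty > 0$. Feynman--Kac yields
\[
\varphi(x^*) = e^{\lambda t}\,\ex^{x^*}\bigl[e^{-\int_0^t V(X_s)\D s}\,\varphi(X_t)\,\Ind_{\tau_\cD > t}\bigr],
\]
and bounding $\varphi(X_t) \leq \varphi(x^*)$ together with the decomposition $V = V^+ - V^-$ and $V^+ \geq \inf_\cD V^+$ gives
\[
1 \;\leq\; e^{(\lambda - \inf_\cD V^+)\,t}\,\ex^{x^*}\bigl[e^{\int_0^t V^-(X_s)\D s}\,\Ind_{\tau_\cD > t}\bigr].
\]
Cauchy--Schwarz dominates the right-hand side by $\ex^{x^*}[e^{2\int_0^t V^-(X_s)\D s}]^{1/2}\,\Prob^{x^*}(\tau_\cD > t)^{1/2}$. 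The density estimate $q_s(x,y) \leq C\,s^{-d/(2\mu)}$ supplied by WLSC, combined with H\"older's inequality and $V^- \in L^p$ for $p > d/(2\mu)$, produces $\sup_x \ex^x[\int_0^t V^-(X_s)\D s] \leq C'\,t^\eta\,\|V^-\|_p$ with $\eta = 1 - d/(2\mu p)$; a standard iteration of Khasminskii's lemma then yields
\[
\sup_x \ex^x\bigl[e^{2\int_0^t V^-(X_s)\D s}\bigr] \;\leq\; C_1 \exp\bigl(C_2\,\|V^-\|_p^{1/\eta}\,t\bigr).
\]

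For the exit-time factor I use convexity: there is a closed half-space $H \supset \cD$ with $\dist(x^*, \partial H) = r$, so $\tau_\cD \leq \tau_H$ and $\Prob^{x^*}(\tau_\cD > t) \leq \Prob^{x^*}(\tau_H > t)$. Projecting $\pro X$ onto the normal to $\partial H$ gives a one-dimensional symmetric L\'evy process $\pro Y$ with characteristic exponent $\Psi(\xi^2)$ started at $-r$, so $\Prob^{x^*}(\tau_H > t) = \Prob^0(\sup_{s \leq t} Y_s < r)$. The symmetry inequality $\Prob^0(\sup_{s \leq t} Y_s \geq r) \geq \tfrac12\,\Prob^0(|Y_t| \geq r)$ combined with a small-ball lower bound $\Prob^0(|Y_t| \geq r) \geq c$ valid for $t\,\Psi(r^{-2}) \geq M_0$ (which follows from the WLSC-induced scale $\Psi^{-1}(1/t)^{-1/2}$ of $\pro Y$) then yields $\Prob^{x^*}(\tau_\cD > t) \leq 1 - c/2$ at $t = M_0/\Psi(r^{-2})$. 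Substituting into the master inequality at this $t$, taking logarithms, and dividing through by $M_0/\Psi(r^{-2})$ gives \eqref{ET3.2A}; the prefactor $C_1(1-c/2)$ is forced below one by iterating the Markov property on time-steps of order $1/\Psi(\inrad(\cD)^{-2})$ when necessary, which is how $\inrad \cD$ enters $\Theta_1$.

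The main obstacle I expect is the third step: producing the uniform bound $\Prob^{x^*}(\tau_\cD > t) \leq q_0 < 1$ at the sharp scale $t \asymp 1/\Psi(r^{-2})$ using only the one-sided lower scaling of WLSC, and arranging the resulting constants so that the coefficient $\Theta_2$ of $\Psi(r^{-2})$ depends on $\eta$ only, with all further geometric and process-dependent constants (together with the $\inrad \cD$ contribution from the Markov iteration) absorbed into $\Theta_1$.
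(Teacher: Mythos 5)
Your skeleton (Feynman--Kac at $x^*$, splitting off an exponential moment of $V^-$ from the exit probability, and the half-space/subordination estimate $\Prob^{x^*}(\uptau_\cD>t)\le\delta<1$ at $t\asymp 1/\Psi(r^{-2})$, which incidentally needs no WLSC) is indeed the paper's strategy, but there is a genuine gap at the crucial step of absorbing the prefactor. With Cauchy--Schwarz at the fixed exponent $2$, the master inequality reads $1\le e^{(\lambda-\inf_\cD V^+)t}\,C_1^{1/2}e^{C_2t/2}\,\Prob^{x^*}(\uptau_\cD>t)^{1/2}$, and taking logarithms at $t\asymp 1/\Psi(r^{-2})$ produces a positive multiple of $\Psi(r^{-2})$ only if $C_1\,\Prob^{x^*}(\uptau_\cD>t)<1$. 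The half-space bound gives only a universal $\delta$ rather close to $1$ (in the paper $\delta=1-F(-1)(1-e^{1-\kappa})>0.7$), while the Khasminskii/Mittag--Leffler constant $C_1$ is in general much larger than $1/\delta$, so the single-step estimate may say nothing. Your proposed remedy --- iterating the Markov property in time-steps of order $1/\Psi((\inrad\cD)^{-2})$ --- does not repair this: a uniform-in-starting-point exit estimate, which the iteration requires, is only available at the inradius scale, so after $n$ steps the total time is $T\asymp n/\Psi((\inrad\cD)^{-2})$, and dividing the logarithmic inequality by $T$ yields a lower bound by $\Psi((\inrad\cD)^{-2})$ instead of $\Psi(r^{-2})$; since $r\le\inrad\cD$ this is strictly weaker than \eqref{ET3.2A} and loses exactly the content of the theorem, namely the dependence on $\dist(x^*,\partial\cD)$. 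The paper circumvents this by using H\"older with a large exponent $\vartheta$ in place of Cauchy--Schwarz: the moment constant then enters as $m_\eta^{1/\vartheta}\to 1$ while the exit probability enters as $\delta^{(\vartheta-1)/\vartheta}$, so the net prefactor $\delta(m_\eta/\delta)^{1/\vartheta}$ is forced below $1$ for $\vartheta$ large, all at the single time $t=2/\Psi(r^{-2})$. (Alternatively, you could stay with exponent $2$ but run Khasminskii with a small threshold $\varepsilon<1-\delta$, so that $C_1=\tfrac{1}{1-\varepsilon}$ and $C_1\delta<1$, at the price of a larger but still admissible factor in front of $\norm{V^-}_p^{\nicefrac{1}{\eta}}$; some such device is indispensable, and your write-up supplies none.)

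A second, smaller but real issue is your unqualified claim that WLSC supplies $q_s(x,y)\le C s^{-d/(2\mu)}$ for all $s$. WLSC is a one-sided (lower) scaling condition; the bound of \cite{BGR14b} is $q_t\le K_1(\Phi^{-1}(1/t))^d$, and it converts to $Ct^{-d/(2\mu)}$ only on a bounded time window, with a constant depending on that window. This is in fact where $\inrad\cD$ enters the paper's $\Theta_1$: one works on $t\le\kappa_1=2/\Psi([\inrad\cD]^{-2})$, which contains $t=2/\Psi(r^{-2})$ because $r\le\inrad\cD$, and the window-dependent constant propagates into the exponential-moment bound --- not through any Markov iteration, which, as explained above, you cannot afford anyway. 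This also explains why the window (hence inradius) dependence disappears in the paper's corollary for the pure power $\Psi(u)=u^{\nicefrac{\alpha}{2}}$.
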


The proof of Theorem~\ref{T3.20} is simpler if the potential $V$ is bounded. Moreover, one can allow a larger class of
$\Psi$, not necessarily satisfying WLSC, and the dependence of $\Theta_1$ on the domain parameters can be waived when
$V \in L^\infty(\cD)$. This is obtained in the following theorem.
\begin{theorem}\label{T3.1}
Let $\Psi \in \mathcal B_0$, $V \in L^\infty(\Rd)$, and $\varphi$ be a non-zero solution of \eqref{dirichev} at
eigenvalue $\lambda^{V,\cD}$. Assume that $|\varphi|$ attains a global maximum at $x^*\in\cD$, and denote $r =
\dist(x^*, \partial\cD)$. Then there exists a universal constant $\theta > 0$, independent of $\cD$, $x^*$, $V$,
$\Psi$ and the dimension $d$, such that
\begin{equation}\label{ET3.1A}
\norm{V^-}_{\infty, \cD}-\inf_{\cD}V^+ + \lambda^{V,\cD} \geq \theta \,\Psi(r^{-2}),
\end{equation}
with
\begin{equation}
\label{theta}
\theta = -\min_{\kappa > 1}\frac{1}{\kappa}\log\left(1-F(-1)(1-e^{1-\kappa})\right) \approx 0.0833,
\end{equation}
where $F$ is the probability distribution function of a Gaussian random variable $N(0,2)$. In particular,
if $\Psi$ is strictly increasing, then
\begin{equation}
\label{inv}
\dist(x^*, \partial\cD) \geq \frac{1}{\sqrt{\Psi^{-1}\left(\frac{\norm{V^-}_{\infty, \cD}-\inf_{\cD}V^+ +\lambda^{V,\cD}}{\theta}\right)}}.
\end{equation}
\end{theorem}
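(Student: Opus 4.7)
The plan is to exploit the Feynman--Kac representation of $\varphi$ afforded by Lemma~\ref{FKsemi} and pit it against a geometric upper bound on the survival probability $\Prob^{x^*}(\uptau_\cD>t)$, reducing the latter to a one-dimensional computation for the subordinate Brownian motion $\pro X$. Concretely, the spectral identity $H^{\cD,V}\varphi=\lambda^{V,\cD}\varphi$ translates into $e^{-\lambda^{V,\cD}t}\varphi=T_t^{\cD,V}\varphi$, which by the continuity of the kernel $T^{\cD,V}(t,\cdot,\cdot)$ of Lemma~\ref{FKsemi}(iv) can be evaluated pointwise at $x^*$. Using positivity of the kernel together with the bound $|\varphi|\le|\varphi(x^*)|$ on $\cD$ (the function vanishes outside $\cD$), and the pointwise estimate $V(X_s)\ge\inf_\cD V^+-\norm{V^-}_{\infty,\cD}$ that holds on $\{\uptau_\cD>t\}$, one arrives at
\begin{equation*}
\Prob^{x^*}(\uptau_\cD>t)\ge\exp(-At),\qquad A\df\lambda^{V,\cD}+\norm{V^-}_{\infty,\cD}-\inf_\cD V^+.
\end{equation*}

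For the matching upper bound on $\Prob^{x^*}(\uptau_\cD>t)$, convexity of $\cD$ together with the supporting hyperplane theorem applied at the nearest boundary point to $x^*$ yields a half-space $H\supset\cD$ with $\dist(x^*,\partial H)=r$, whence $\uptau_\cD\le\uptau_H$. Choosing coordinates so that $H=\{y_1>0\}$ and writing $X_t=B_{S^\Psi_t}$, one has $\Prob^{x^*}(\uptau_\cD>t)\le\Prob^{x^*}(X_t\in H)=\ex_{\Prob_S}[F(r/\sqrt{S^\Psi_t})]$, with $F$ the CDF of $N(0,2)$ (since the first coordinate of $\pro B$ has variance $2s$ at time $s$). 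Splitting this expectation on $\{S^\Psi_t\ge r^2\}$ and its complement, and using $F(-1)=1-F(1)$, gives $\Prob^{x^*}(\uptau_\cD>t)\le 1-F(-1)\Prob(S^\Psi_t\ge r^2)$, while the Laplace identity \eqref{lapla} combined with a one-sided Chebyshev inequality yields $\Prob(S^\Psi_t\le r^2)\le e\cdot\ex[e^{-S^\Psi_t/r^2}]=e^{1-t\Psi(r^{-2})}$.

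Setting $t=\kappa/\Psi(r^{-2})$ with $\kappa>1$, the two bounds combine to
\begin{equation*}
\exp(-A\kappa/\Psi(r^{-2}))\le 1-F(-1)(1-e^{1-\kappa}),
\end{equation*}
and taking logarithms delivers \eqref{ET3.1A} with the universal constant $\theta$ of \eqref{theta} after optimising over $\kappa>1$; a one-variable numerical search locates the minimiser near $\kappa\approx 2.2$ and gives $\theta\approx 0.0833$. The inverse form \eqref{inv} then follows by monotone inversion of $\Psi$. The only delicate point I expect is the upgrade of the $L^2$-eigenfunction identity to a pointwise statement at the maximiser, which is handled through the continuity of the kernel from Lemma~\ref{FKsemi}(iv)--(v); once this is in hand, the rest is an elementary calculation with the subordinator, and the dimension-independence of $\theta$ is a direct consequence of the one-dimensional reduction afforded by convexity.
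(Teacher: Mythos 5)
Your proposal is correct and follows essentially the same route as the paper: the Feynman--Kac lower bound $\Prob^{x^*}(\uptau_\cD>t)\geq e^{-At}$ at the maximizer, the convexity/supporting-hyperplane reduction to a one-dimensional Gaussian computation giving $\Prob^{x^*}(\uptau_\cD>t)\leq 1-F(-1)\Prob_S(S^\Psi_t\geq r^2)$, the Markov-inequality bound $\Prob_S(S^\Psi_t\leq r^2)\leq e^{1-t\Psi(r^{-2})}$ with $t=\kappa/\Psi(r^{-2})$, and optimization over $\kappa>1$. The only cosmetic difference is that you phrase the geometric step via the half-space containing $\cD$ rather than the complementary half-space in $\cD^c$ used in the paper, which yields the identical estimate.
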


Next we turn to proving these theorems. For technical reasons we start by showing first the latter theorem.
\begin{proof}[Proof of Theorem~\ref{T3.1}]
Let $\uptau_\cD$ be the first exit time of $\pro X$ from $\cD$, as defined in \eqref{exit}. Using the eigenvalue
equation and the representation (\ref{killedV}), we have that
$$
|\varphi(x^*)| \leq e^{\lambda^{V,\cD} \, t }\ex^{x^*} [e^{-\int_0^{t} V(X_s) \D s}
|\varphi(X_{t})|\Ind_{\{t<\uptau_\cD\}}] \leq |\varphi(x^*)| e^{\lambda^{V,\cD} t} e^{(\norm{V^-}_{\infty, \cD}-\inf_{\cD}V^+) t}
\Prob^{x^*}(\uptau_\cD > t),
$$
that is,
\begin{equation}\label{E1.2}
e^{t\, (\norm{V^-}_{\infty, \cD}-\inf_{\cD}V^+ +\lambda^{V,\cD})} \Prob^{x^*}(\uptau_\cD > t) \;\geq \; 1\,, \quad t\geq 0.
\end{equation}
We choose
\begin{equation}
\label{chooset}
t = \frac{\kappa}{\Psi(r^{-2})}
\end{equation}
with a suitable $\kappa$, which will be justified below, and show that for this $t$ we have
\begin{equation}
\label{smalldelta}
\Prob^{x^*}(\uptau_\cD>t)< \delta <1,
\end{equation}
where $\delta$ does not depend on $x^*$, $\cD$.

Let $z\in\partial \cD$ be such that $\dist(x^*, z)=r$, and consider the half-space $\cH \subset \cD^c$ intersecting
$\cD$ at $z$. Note that this is made possible by the convexity of $\cD$, and
$$
\Prob^{x^*}(\uptau_\cD\leq t) \geq \Prob^{x^*}(X_t\in \cH)
$$
holds. We assume with no loss of generality that $\cH$ is perpendicular to the $x$-axis, $x^*=0$ and $z=(r, 0,\ldots, 0)$.
This is possible, since we can inscribe a ball of radius $r$ in $\bar{\cD}$ centered at $x^*$ and $\cH$ would be a tangent plane to it at the point $z$.
Therefore, we have for $s\geq r^2$ that
\begin{equation}
\label{below}
\Prob^{x^*}_W(B_s\in \cH)
=\Prob^0_W(B^1_s \geq r) = \frac{1}{\sqrt{4\pi}}\int_{\frac{r}{\sqrt{s}}}^\infty e^{-\frac{y^2}{4}}\, \D{y}
\geq \frac{1}{\sqrt{4\pi}}\int_{1}^\infty e^{-\frac{y^2}{4}}\, \D{y} = 
F(-1)\,,
\end{equation}
where $\pro {B^1}$ denotes a one-dimensional Brownian motion running twice as fast as standard Brownian motion, and $F$
is the probability distribution function of a Gaussian random variable with mean $0$ and variance $2$.
Using the subordination formula \eqref{subord} and the uniform estimate (\ref{below}), we have
\begin{eqnarray*}
\Prob^{x^*}(X_t\in \cH)
&=&
\int_0^\infty \Prob^{x^*}_W(B_s\in \cH) \mathbb P_S (S^\Psi_t \in \D s) \\
&\geq&
\int_{r^2}^\infty  \Prob^{x^*}_W(B_s\in \cH) \mathbb P_S (S^\Psi_t \in \D s) \geq F(-1) \Prob_S (S^\Psi_t\geq r^2).
\end{eqnarray*}
By (\ref{lapla}) and (\ref{chooset}) we have
$$
\Prob_S (S^\Psi_t\leq r^2) = \Prob_S (e^{-r^{-2}S^\Psi_t}\geq  e^{-1})\leq e\, \Exp_{\Prob_S}[e^{-r^{-2}S^\Psi_t}] =
e^{1-t\Psi(r^{-2})} = e^{1-\kappa}.
$$
Hence with $\kappa > 1$ we obtain $\Prob_S (S^\Psi_t\leq r^2)<1$, and thus \eqref{smalldelta} holds with $\delta =
1-F(-1)(1-e^{1-\kappa})$, independently on $r$. This then implies \eqref{ET3.1A} with constant prefactor
$$
\theta_\kappa = -\frac{1}{\kappa}\log\left(1-F(-1)(1-e^{1-\kappa})\right)\,,
$$
which on optimizing over $\kappa$ gives the constant \eqref{theta}.
\end{proof}

\begin{proof}[Proof of Theorem~\ref{T3.20}]
The key estimate for the proof is the following improvement of \eqref{khasmin}: for any $\kappa_1>0$ there exists a
constant $C_{1}>0$, dependent on $\kappa_1, d, \mu, \underline{c}$, satisfying for $t\in [0, \kappa_1]$ and $\vartheta>0$
\begin{equation}\label{ET3.2B}
\sup_{x\in\Rd}\ex^x\left[e^{\int_0^t \vartheta V^-(X_s)\, \D{s}}\right]\leq \;
m_\eta e^{\bigl(C_1\vartheta\norm{V^-}_{p}\Gamma(\eta)\bigr)^{\nicefrac{1}{\eta}} t}\,,
\end{equation}
where $\eta=1-\frac{d}{2\mu p}$ and $m_\eta$ depends only on $\eta$. First we complete the proof of the theorem assuming
\eqref{ET3.2B}.

Choose $\kappa_1 = \frac{2}{\Psi([\inrad \cD]^{-2})}$. Suppose that $r=\dist(x^*, \cD)$ and let $t=\frac{2}{\Psi(r^{-2})}\leq \kappa_1$.
Then using \eqref{killedV} and H\"{o}lder inequality, we obtain for $\vartheta\geq 1$ that
\begin{equation}\label{ET3.2C}
1\;\leq\;
e^{t\,(\lambda^{V, \cD}-\inf_{\cD} V^+)}\; \ex^{x^*}\left[e^{\int_0^t \vartheta V^-(X_s)\, \D{s}}\right]^{\nicefrac{1}{\vartheta}}
\;\left(\Prob^{x^*}(\uptau_{\cD}> t)\right)^{\frac{\vartheta-1}{\vartheta}}\,.
\end{equation}
Hence from \eqref{smalldelta}, \eqref{ET3.2B} and \eqref{ET3.2C} we see that
\begin{align*}
1&\leq \delta^{\frac{\vartheta-1}{\vartheta}} (m_\eta)^{\nicefrac{1}{\vartheta}}
\exp\left(t\left[\lambda^{V, \cD}-\inf_{\cD} V^+
+ \frac{1}{\vartheta}(C_1\vartheta\norm{V^-}_{p}\Gamma(\eta)\bigr)^{\nicefrac{1}{\eta}}\right]\right)
\\
& =\delta  \left(\frac{m_\eta}{\delta}\right)^{\nicefrac{1}{\vartheta}}
\exp\left(t\left[\lambda^{V, \cD}-\inf_{\cD} V^+
+ \frac{1}{\vartheta}(C_1\vartheta\norm{V^-}_{p}\Gamma(\eta)\bigr)^{\nicefrac{1}{\eta}}\right]\right)\,.
\end{align*}
Since $\delta<1$ and $\lim_{\vartheta\to\infty} \left(\frac{m_\eta}{\delta}\right)^{\nicefrac{1}{\vartheta}}=1$, we can choose
$\vartheta$ large enough such that
$$
\delta_1= \delta  \left(\frac{m_\eta}{\delta}\right)^{\nicefrac{1}{\vartheta}}\;<\;1.
$$
Thus we obtain
$$
\log \frac{1}{\delta_1} \leq t \left(\lambda^{V, \cD}-\inf_{\cD} V^+
+ \frac{1}{\vartheta}(C_1\vartheta\norm{V^-}_{p}\Gamma(\eta)\bigr)^{\nicefrac{1}{\eta}}\right),
$$
implying
$$
\left(\frac{1}{2}\log\frac{1}{\delta_1}\right)\, \Psi(r^{-2})\;\leq\; \lambda^{V, \cD}-\inf_{\cD} V^+ + \frac{1}{\vartheta}(C_1\vartheta\norm{V^-}_{p}\Gamma(\eta)\bigr)^{\nicefrac{1}{\eta}}.
$$
This gives \eqref{ET3.2A} for
$$
\Theta_1 = \frac{1}{\vartheta}(C_1\vartheta \Gamma(\eta)\bigr)^{\nicefrac{1}{\eta}} \quad \text{and}
\quad \Theta_2= \frac{1}{2}\log\frac{1}{\delta_1}\,.
$$

Now we proceed to establish \eqref{ET3.2B}. Since $\Psi$ has the WLSC property, the characteristic exponent
$\Phi(r)=\Psi(r^2)$ also has the WLSC property, namely
$$
\Phi(\gamma u) \;\geq\; \underline{c}\, \gamma^{2\mu} \Phi(u), \quad \text{for all} \; u>0
\; \text{and}\; \gamma\geq 1.
$$
Thus by \cite[Prop.~19]{BGR14b} there exists a constant $K_1$, dependent on $d, \mu, \underline{c}$, satisfying
\begin{equation}\label{ET3.2D}
q_t(x, y)= q_t(\abs{x-y})\leq K_1\left(\Phi^{-1}\left(\frac{1}{t}\right)\right)^d, \quad \forall\; t>0\,.
\end{equation}
Here $q_t(x,y)$ denotes the transition density function of $\pro X$. On the other hand, from the WLSC property
of $\Phi$ it follows that
$$
\Phi^{-1}(\lambda)\leq \lambda^{\frac{1}{2\mu}}\frac{u}{\Phi(u)^{\frac{1}{2\mu}}}\,
\quad \text{for all}\; \lambda\geq \Phi(u), \; u>0.
$$
Choose $\nu>0$ and denote $\nu_1=\frac{\nu}{(\Phi(\nu))^{\frac{1}{2\mu}}}$. Then for $s\geq \Phi(\nu)$ we
obtain
\begin{equation}\label{ET3.20dep}
\Phi^{-1}(s)\leq \nu_1\,s^{-2\mu}.
\end{equation}
Hence, using the above estimate in \eqref{ET3.2D} we get that
\begin{equation}\label{ET3.2E}
q_t(x, y)\leq K_2\, t^{-\frac{d}{2\mu}}, \quad t\leq \frac{1}{\Psi(\nu^2)},
\end{equation}
where $K_2$ depends on $d, \mu$ and $\nu_1$. Let $\kappa_1$ be positive and choose $\nu=
\sqrt{\Psi^{-1}(\frac{1}{\kappa_1})}$. With this choice of $\nu$ we have from \eqref{ET3.2E} that
\begin{equation}\label{ET3.2F}
q_t(x, y)\leq K_2\, t^{-\frac{d}{2\mu}}, \quad t\leq \kappa_1.
\end{equation}
For every $t\in (0, \kappa_1]$ and $f\in L^p(\Rd)$ we have
\begin{align*}
\ex^x\left[f(X_t)\right] &\leq \norm{f}_{p} \left[\int_{\Rd} (q_t(\abs{x-y}))^{p'}\, \D{y}\right]^{\nicefrac{1}{p'}}
\\
&\leq
K_2^{\nicefrac{1}{p}} \norm{f}_{p}  t^{-\frac{d}{2\mu p}}\left[\int_{\Rd} q_t(\abs{x-y})\, \D{y}\right]^{\nicefrac{1}{p'}}
 =
K_3 \norm{f}_{p}  t^{-\frac{d}{2\mu p}},
\end{align*}
where $p'=\frac{p}{p-1}$, $K_3=K_2^{\nicefrac{1}{p}}$, and in the second line above we used \eqref{ET3.2F}.
Let now $0 \leq s_1 \leq \ldots \leq s_{k}$, $k \in \mathbb N$. Using the Markov property of $\pro X$ with respect to
its natural filtration $\pro\cF$, for $f\geq 0$ we obtain
\begin{eqnarray*}
\ex^x[f(X_{s_1})\cdots f(X_{s_k})]
&=&
\ex^x[f(X_{s_1})\cdots f(X_{s_{k-1}}) \ex^x[f(X_{s_k})\,|\, \cF_{s_{k-1}}]] \\
&=&
\ex^x[f(X_{s_1})\cdots f(X_{s_{k-1}}) \ex^{X_{s_k}}[f(X_{s_k-s_{k-1}})] \\
&\leq&
K_3 \norm{f}_{p} (s_k-s_{k-1})^{-\frac{d}{2\mu p}} \ex^x[f(X_{s_1})\cdots f(X_{s_{k-1}})] \\
&\leq &
\!\!\! \ldots \leq (K_3 \norm{f}_{p})^k s_1^{-\frac{d}{2\mu p}} (s_2-s_1)^{-\frac{d}{2\mu p}}
\cdots (s_k - s_{k-1})^{-\frac{d}{2\mu p}}.
\end{eqnarray*}
Hence (compare \cite[Lem. 4.51]{LHB} in the second edition)
\begin{eqnarray*}
\lefteqn{
\ex^x\left[\frac{1}{k!}\left(\int_0^t f(X_s)\, \D{s}\right)^k\right] } \\
&\leq \;&
\int_0^t \D{s_1} \int_{s_1}^t \D{s_2} ... \int_{s_k}^t \D{s_k} \, \ex^x [f(X_{s_1})f(X_{s_2})...f(X_{s_k})] \\
&\leq \;&
K_3^k \norm{f}_{p}^k \int_0^t \D{s_1}\int_{s_1}^t \D{s_2} ... \int_{s_k}^t \D{s_k} \,
s_1^{-\frac{d}{2\mu p}} (s_2-s_1)^{-\frac{d}{2\mu p}} \cdots (s_k - s_{k-1})^{-\frac{d}{2\mu p}} \\
&=&
\frac{\left(K_3\norm{f}_{p}t^\eta\Gamma(\eta)\right)^k}{\Gamma(1+k\eta)}, \quad t\leq \kappa_1,
\end{eqnarray*}
where $\eta=1-\frac{d}{2\mu p}>0$, by our choice of $p$. Recall the Mittag-Leffler function
$$
\sM_{\beta}(x)=\sum_{k=0}^\infty \frac{ x^k}{\Gamma(1+\beta k)}
$$
(see \cite{GKMR} for definitions and properties). We find by the above that for $t\in[0, \kappa_1]$,
\begin{equation}\label{ET3.2G}
\sup_{x\in\Rd}\ex^x\left[e^{\int_0^tf(X_s)\, \D{s}}\right]\leq \sM_\eta(K_3\norm{f}_{p}t^\eta\Gamma(\eta)).
\end{equation}
It is also known that for some constant $m_\eta$, dependent only on $\eta$,
$$
\sM_\eta(x)\leq m_\eta e^{x^{\nicefrac{1}{\eta}}}, \quad  x\geq 0,
$$
holds. Thus, using \eqref{ET3.2G} we have for $t\leq \kappa_1$ that
\begin{equation}\label{ET3.2H}
\sup_{x\in\Rd}\ex^x\left[e^{\int_0^tf(X_s)\, \D{s}}\right]
\leq m_\eta e^{\left(K_3\norm{f}_{p}\Gamma(\eta)\right)^{\frac{1}{\eta}} t} .
\end{equation}
Putting $f=\vartheta V^-$ in \eqref{ET3.2H}, we obtain \eqref{ET3.2B}.
\end{proof}

\medskip
The dependence of $\Theta_1$ on $\inrad \cD$ is due to the factor
$$
\nu_1=\frac{\nu}{(\Phi(\nu))^{\frac{1}{2\mu}}},
$$
which appears in \eqref{ET3.20dep}. For $\Psi(u)=u^{\nicefrac{\alpha}{2}}$, however, $\nu_1$ does not depend on $\nu$.
Thus we have the following improvement to Theorem~\ref{T3.20}.
\begin{corollary}
Suppose that $\Psi(u)=u^{\nicefrac{\alpha}{2}}$. Moreover, assume that $V$ is a $\Psi$-Kato class function with
$V^-\in L^p(\Rd)$, $p>\frac{d}{\alpha}$. Let  $\varphi$ be a non-zero solution of
\eqref{dirichev} at eigenvalue $\lambda^{V,\cD}$. Assume that $|\varphi|$ attains a global maximum at $x^*\in\cD$,
and denote $r =\dist(x^*, \partial\cD)$. Then there exist $\Theta_1$, dependent on $d, \alpha,\underline{c}, \eta $,
and $\Theta_2$, dependent on $\eta$, where $\eta=1-\frac{d}{\alpha p}$, such that
\begin{equation*}
\Theta_1\norm{V^-}^{\nicefrac{1}{\eta}}_{p}-\inf_{\cD}V^+ + \lambda^{V,\cD} \geq \Theta_2 \,\Psi(r^{-2})
\end{equation*}
holds.
\end{corollary}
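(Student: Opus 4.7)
The plan is to mimic the proof of Theorem~\ref{T3.20} step by step, exploiting the special feature that for $\Psi(u) = u^{\alpha/2}$ the weak local scaling property is in fact a global scaling equality, with $\mu = \alpha/2$ and $\underline{c} = 1$. This trivializes the auxiliary scale $\nu$ introduced in \eqref{ET3.20dep} to control $\Phi^{-1}$, and thereby removes the restriction $t \leq \kappa_1 = 2/\Psi([\inrad\cD]^{-2})$ under which the heat kernel bound \eqref{ET3.2F} was previously forced to live.

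Concretely, I would first observe that $\Phi(r) = \Psi(r^2) = r^\alpha$ gives $\Phi^{-1}(s) = s^{1/\alpha}$ in closed form, so the generic transition density bound \eqref{ET3.2D} becomes the unrestricted estimate $q_t(x,y) \leq K_1 t^{-d/\alpha}$ valid for all $t>0$, with $K_1 = K_1(d,\alpha)$. Equivalently, the quantity $\nu_1 = \nu/\Phi(\nu)^{1/(2\mu)}$ from \eqref{ET3.20dep} collapses to $1$ independently of $\nu$, so one may formally let $\nu \downarrow 0$ (equivalently $\kappa_1 \uparrow \infty$) in the subsequent estimates without losing anything.

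With this unrestricted heat kernel bound, the Khasminskii-type Markov-recursion chain in the proof of Theorem~\ref{T3.20} goes through for arbitrary $t>0$, producing an exponential-of-integral bound of the shape \eqref{ET3.2H} with a constant $K_3 = K_3(d,\alpha)$ independent of $\cD$. Combining this with the survival-time inequality \eqref{smalldelta} and the H\"older splitting \eqref{ET3.2C} at the choice $t = 2/\Psi(r^{-2})$, and finally optimizing over the auxiliary parameter $\vartheta$ exactly as at the end of Theorem~\ref{T3.20}, delivers the claimed inequality with $\Theta_1$ depending only on $d,\alpha,\underline{c},\eta$ and $\Theta_2$ depending only on $\eta$. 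There is no genuine obstacle beyond bookkeeping: the whole point of the corollary is to record that the $\inrad\cD$-dependence in Theorem~\ref{T3.20} is an artifact of the general WLSC framework, and is automatically absent for the homogeneous symbol $u^{\alpha/2}$.
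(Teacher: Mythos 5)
Your proposal is correct and is essentially the paper's own argument: the paper itself proves the corollary by observing (in the paragraph preceding it) that $\nu_1 = \nu/\Phi(\nu)^{1/(2\mu)}$ collapses to a constant for $\Psi(u) = u^{\alpha/2}$, so the time restriction $t\leq\kappa_1$ in \eqref{ET3.2F} disappears and $\Theta_1$ loses its $\inrad\cD$-dependence. You correctly identified this as the single point at which the general WLSC proof must be modified, and the rest is carried over verbatim from Theorem~\ref{T3.20}, exactly as the authors intend.
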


\begin{remark}
For classical Schr\"odinger operators we have $\Psi(u) = u$, for which Theorem \ref{T3.1} implies (\ref{RS}), possibly
with a different constant $c$. Also, for fractional Schr\"odinger operators we have $\Psi(u) = u^{\alpha/2}$, which
reproduces the result obtained in \cite{Biswas-17}. Formulae \eqref{ET3.1A}-\eqref{inv} equally apply for $V \equiv 0$,
in which case the statement refers to the Dirichlet eigenfunctions and eigenvalues.
\end{remark}

\begin{example}
Some important examples of $\Psi$ satisfying Assumption \ref{WLSC} include:
\begin{itemize}
\item[(i)]
$\Psi(u)=u^{\alpha/2}, \, \alpha\in(0, 2]$, with $\mu = \frac{\alpha}{2}$.
\item[(ii)]
$\Psi(u)=(u+m^{2/\alpha})^{\alpha/2}-m$, $m> 0$, $\alpha\in (0, 2)$, with $\mu = \frac{\alpha}{2}$.
\item[(iii)]
$\Psi(u)=u^{\alpha/2} + u^{\beta/2}, \, \alpha, \beta \in(0, 2]$, with $\mu = \frac{\alpha}{2}
\wedge \frac{\beta}{2}$.
\item[(iv)]
$\Psi(u)=u^{\alpha/2}(\log(1+u))^{\beta/2}$, $\alpha \in (0,2)$, $\beta \in (0, 2-\alpha)$, with $\mu=\frac{\alpha}{2}$.
\item[(v)]
$\Psi(u)=u^{\alpha/2}(\log(1+u))^{-\beta/2}$, $\alpha \in (0,2]$, $\beta \in [0,\alpha)$ with $\mu=\frac{\alpha-\beta}{2}$.
(Since for $\gamma\geq 1$, $u>0$, $(1+u)^{\gamma}\geq (1+\gamma u)$ holds, we have $\gamma^{\nicefrac{\beta}{2}}
(\log (1+u))^{\nicefrac{\beta}{2}}\geq (\log (1+\gamma u))^{\nicefrac{\beta}{2}}$.)
\end{itemize}
\end{example}

\subsection{Consequences on the spectrum}
The above theorems have a number of implications on the eigenvalues and related quantities. Here we discuss these
implications involving an interplay of survival times of paths and geometric features.
\begin{corollary}\label{C3.1}
Let $\varphi$ be an eigenfunction corresponding to eigenvalue $\lambda^{V,\cD}$ of $\Psidel + V$ under the conditions
of Theorem~\ref{T3.1}. Suppose that $\lambda^{V,\cD}>0$. Then we have
\begin{equation}\label{C3.1A}
\int_0^\infty \ex^{x^*}\left[ e^{-\int_0^t V(X_s)\, \D{s}} \Ind_{\{\uptau_\cD>t\}}\right]\, \D{t}\; \geq \;
\frac{1}{\lambda^{V,\cD}}\,,
\end{equation}
where $x^*$ is a maximizer of $\abs{\varphi}$ in $\cD$.
\end{corollary}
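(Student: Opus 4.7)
The plan is to read \eqref{C3.1A} as a time integral of the pointwise lower bound that already appears implicitly in the proof of Theorem~\ref{T3.1}. Recall the computation at the start of that proof: using that $\varphi$ is an eigenfunction of $H^{\cD,V}$ and invoking the Feynman--Kac representation \eqref{killedV} from \Cref{FKsemi}(i), one has, for every $t \geq 0$,
\begin{equation*}
\varphi(x^*) \;=\; e^{\lambda^{V,\cD}\, t}\, T_t^{\cD,V}\varphi(x^*)
\;=\; e^{\lambda^{V,\cD}\, t}\,
\ex^{x^*}\!\left[e^{-\int_0^{t} V(X_s)\,\D{s}}\,\varphi(X_t)\,\Ind_{\{\uptau_\cD > t\}}\right].
\end{equation*}
Taking absolute values and using that $x^*$ maximizes $|\varphi|$ on $\cD$ (and that $\varphi\equiv 0$ on $\cD^c$, so the integrand is supported where $|\varphi(X_t)|\leq |\varphi(x^*)|$), one obtains
\begin{equation*}
|\varphi(x^*)| \;\leq\; e^{\lambda^{V,\cD}\, t}\,|\varphi(x^*)|\,
\ex^{x^*}\!\left[e^{-\int_0^{t} V(X_s)\,\D{s}}\,\Ind_{\{\uptau_\cD > t\}}\right].
\end{equation*}

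Since $\varphi$ is non-zero and $x^*$ is a global maximizer of $|\varphi|$, we have $|\varphi(x^*)|>0$, so we may cancel it and rearrange into the pointwise bound
\begin{equation*}
e^{-\lambda^{V,\cD}\, t} \;\leq\;
\ex^{x^*}\!\left[e^{-\int_0^{t} V(X_s)\,\D{s}}\,\Ind_{\{\uptau_\cD > t\}}\right], \qquad t\geq 0.
\end{equation*}
Integrating this inequality over $t \in [0,\infty)$ and using that $\lambda^{V,\cD}>0$ makes the left-hand side integrable with value $1/\lambda^{V,\cD}$, while on the right-hand side Tonelli's theorem (the integrand is non-negative) justifies interchanging $\D{t}$ and the expectation, yields \eqref{C3.1A}.

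There is essentially no obstacle: all the analytic content—namely the Feynman--Kac representation and its use to obtain the pointwise bound—has already been established in \Cref{FKsemi} and exploited in the proof of \Cref{T3.1}. The only genuine use of the extra hypothesis $\lambda^{V,\cD}>0$ is to ensure that $\int_0^\infty e^{-\lambda^{V,\cD} t}\,\D t$ converges and evaluates to $1/\lambda^{V,\cD}$; without this assumption the left-hand side would diverge and the statement would be vacuous.
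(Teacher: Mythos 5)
Your proposal is correct and follows essentially the same route as the paper: the paper's proof of Corollary~\ref{C3.1} cites precisely the pointwise bound $\ex^{x^*}\bigl[e^{-\int_0^t V(X_s)\,\D{s}}\Ind_{\{\uptau_\cD>t\}}\bigr]\geq e^{-\lambda^{V,\cD}t}$ extracted from the proof of Theorem~\ref{T3.1} (where it arises exactly as you re-derive it, from the Feynman--Kac representation \eqref{killedV} and maximality of $|\varphi(x^*)|$), and then integrates in $t$ over $(0,\infty)$. Your added remarks on Tonelli and on the role of $\lambda^{V,\cD}>0$ are fine and consistent with the paper's argument.
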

\begin{proof}
From the proof of Theorem~\ref{T3.1} we have
\begin{equation}\label{EC3.2A}
\ex^{x^*}\left[ e^{-\int_0^t V(X_s)\, \D{s}} \Ind_{\{\uptau_\cD>t\}}\right] \;\geq \; e^{-\lambda^{V,\cD}t}\,, \quad t\geq 0\,.
\end{equation}
By integrating both sides in $t$ on $(0, \infty)$, we obtain \eqref{C3.1A}.
\end{proof}
\noindent
Note that the left hand side of \eqref{C3.1A} gives the mean survival time of the process $\pro X$ starting from $x^*$,
perturbed by the potential $V$, thus the above result gives a probabilistic bound on the Dirichlet-Schr\"odinger eigenvalues.

\begin{remark}
Using the trivial bound
$$
\dist(x^*, \partial\cD) \leq \inrad \cD,
$$
involving the inradius of $\cD$, we get the geometric constraint
$$
\lambda^{V,\cD} \geq \theta \Psi\left( \frac{1}{(\inrad \cD)^2}\right) - \norm{V^-}_\infty + \inf_{\cD} V^+
$$
on the bottom of the spectrum.
\end{remark}

\begin{remark}
\label{doob}
Since $\Psi^{-1}$ is an increasing function, the bound \eqref{inv} can be interpreted as saying that if the potential
is not strong enough, the global extrema of $\varphi$ cannot be too close to the boundary. Intuitively it is clear that
one can decrease $\dist(x^*, \partial\cD)$, for instance, by a potential which has a hole close to the boundary, that
is deep enough to make the process stay in that region with a sufficiently high probability, preventing it to hit the
boundary too soon and get killed. It is seen that the condition only requires sufficient strength of the potential and
no details on its local behaviour.
There is also a probabilistic interpretation of relation \eqref{ET3.1A}. From \cite[Rem.~4.8]{Schilling-98} we find
that
$$
c_1 \Exp^{x}[\uptau_{\sB_r(x)}]\;\leq \frac{1}{\Psi(r^{-2})}\;\leq c_2 \Exp^x[\uptau_{\sB_r(x)}]\,,
$$
for some constants $c_1, c_2 > 0$ depending only on $d$. A combination with \eqref{C3.1A} then implies that the inequality
makes a comparison of the mean survival time of the process starting from $x^*$ perturbed by the potential with the mean
survival time of the free (unperturbed) process, involving the proportionality constant $\theta$. Note that since the
principal eigenfunction $\varphi_1$ is strictly positive, by the Doob $h$-transform $f \mapsto \varphi_1 f$, $f \in L^2(\cD)$,
we can construct a random process generated by the operator $\widetilde H f = \frac{1}{\varphi_1} H(\varphi_1 f)$ whose
stationary measure is $\varphi_1^2 dx$. The location $x^*$ of a global maximum of $\varphi_1$ then corresponds to a mode
of the stationary density of the process conditioned never to exit the domain $\cD$.
\end{remark}

Next we consider the principal Dirichlet eigenvalues in the absence of a potential.
\begin{corollary}\label{C3.2}
Let $V=0$ and consider the principal eigenvalue $\lambda_1^\cD$ of the Dirichlet problem \eqref{dirichev} for $\Psi(-\Delta)$.
\begin{enumerate}
\item[(i)]
We have
\begin{equation}\label{C3.1B}
\lambda_1^\cD \geq \left(\frac{\Gamma(p+1)}{\sup_{x\in \cD} \ex^{x}[\uptau^p_{\cD}]}\right)^{1/p},
\end{equation}
for every $p \geq 1$.

\item[(ii)]
Let $\Psi \in \mathcal B_0$ be a complete Bernstein function. Then there exist positive universal constants $C_1, C_2$,
dependent only on $d$, such that
\begin{equation}\label{C3.1C}
\frac{C_1}{\Psi([\inrad \cD]^{-2})}\;\leq\;\sup_{x\in \cD} \ex^{x}[\uptau_{\cD}]\;\leq\; \frac{C_2}{\Psi([\inrad \cD]^{-2})}.
\end{equation}

\item[(iii)]
 There exists a constant $C_3 > 0$, dependent on $d$, such that
\begin{equation}\label{C3.1C0}
\frac{1}{\sup_{x\in \cD} \ex^{x}[\uptau_{\cD}]}\;\leq \lambda_1^\cD \leq \frac{C_3}{\sup_{x\in \cD} \ex^{x}[\uptau_{\cD}]}\,.
\end{equation}
\end{enumerate}
\end{corollary}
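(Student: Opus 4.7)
The plan is to derive all three assertions from the survival--probability bound $\Prob^{x^*}(\uptau_\cD > t) \geq e^{-\lambda_1^\cD t}$, namely \eqref{EC3.2A} applied to $V \equiv 0$ with $x^*$ a maximizer of the principal Dirichlet eigenfunction, combined with Markov iteration and a comparison with an inscribed ball. For part (i), Fubini gives
\[
\ex^{x^*}[\uptau_\cD^p] \,=\, \int_0^\infty p\, t^{p-1}\, \Prob^{x^*}(\uptau_\cD > t)\, \D t
\,\geq\, p\int_0^\infty t^{p-1} e^{-\lambda_1^\cD t}\, \D t \,=\, \frac{\Gamma(p+1)}{(\lambda_1^\cD)^p},
\]
so rearranging and taking the supremum over starting points yields \eqref{C3.1B}. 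The $p=1$ case is already Corollary~\ref{C3.1} specialized to $V = 0$.

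For the lower bound in (ii), pick $x_0 \in \cD$ with an inscribed ball $B_r(x_0) \subset \cD$ of radius $r = \inrad \cD$. Monotonicity $\uptau_\cD \geq \uptau_{B_r(x_0)}$ under $\Prob^{x_0}$, together with the Schilling-type two-sided bound recalled in Remark~\ref{doob}, gives
\[
\sup_{x \in \cD} \ex^x[\uptau_\cD] \,\geq\, \ex^{x_0}[\uptau_{B_r(x_0)}] \,\geq\, \frac{C_1}{\Psi(r^{-2})}.
\]
For the upper bound I revisit the half-space argument in the proof of Theorem~\ref{T3.1} with $V \equiv 0$ but for an arbitrary starting point $x \in \cD$: setting $r(x) = \dist(x, \partial\cD)$, the identical computation yields $\Prob^x(\uptau_\cD > \kappa/\Psi(r(x)^{-2})) \leq \delta$, with $\delta = 1 - F(-1)(1 - e^{1-\kappa}) < 1$ independent of $x$. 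Since $r(x) \leq \inrad \cD$ and $\Psi$ is increasing, $T := \kappa/\Psi([\inrad \cD]^{-2}) \geq \kappa/\Psi(r(x)^{-2})$, so $\Prob^x(\uptau_\cD > T) \leq \delta$ uniformly in $x$. Iterating via the strong Markov property gives $\Prob^x(\uptau_\cD > nT) \leq \delta^n$, whence $\sup_x \ex^x[\uptau_\cD] \leq T/(1-\delta) = C_2/\Psi([\inrad \cD]^{-2})$.

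For (iii), the lower bound is just (i) with $p=1$. For the upper bound I combine domain monotonicity with a Jensen-type variational estimate: choosing $B_r \subset \cD$ with $r = \inrad \cD$, we have $\lambda_1^{\cD} \leq \lambda_1^{B_r}(\Psi(-\Delta))$, and plugging the first Dirichlet $-\Delta$ eigenfunction $f$ on $B_r$ (extended by zero, which lies in the form domain since $\Psi \in \mathcal B_0$ satisfies $\Psi(u) \leq \Psi(1)(1+u)$) into the Rayleigh quotient gives
\[
\lambda_1^{B_r}(\Psi(-\Delta)) \,\leq\, \frac{\int \Psi(|\xi|^2)|\hat f(\xi)|^2\, \D\xi}{\|\hat f\|_2^2}
\,\leq\, \Psi\!\left(\frac{\int |\xi|^2 |\hat f(\xi)|^2 \, \D\xi}{\|\hat f\|_2^2}\right) \,=\, \Psi(\lambda_1^{B_r}(-\Delta)),
\]
by Jensen applied to the concave $\Psi$. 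Since $\lambda_1^{B_r}(-\Delta) = c(d)/r^2$ and $\Psi(\lambda u) \leq \lambda \Psi(u)$ for $\lambda \geq 1$ (because $\Psi(u)/u$ is decreasing), this is at most $C\,\Psi(r^{-2})$. Combined with the upper bound from (ii), it follows that $\lambda_1^\cD \leq C\,\Psi([\inrad \cD]^{-2}) \leq C_3 / \sup_x \ex^x[\uptau_\cD]$.

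The main technical subtlety is the uniform exit-probability estimate in the upper bound of (ii): Theorem~\ref{T3.1} only delivers this bound at the eigenfunction maximizer, and one has to verify that the same half-space construction produces $\Prob^x(\uptau_\cD > T) \leq \delta$ at every interior point $x$, with the crucial observation that the monotonicity $\Psi(r(x)^{-2}) \geq \Psi([\inrad \cD]^{-2})$ goes in the right direction to yield a single $T$ uniform in $x$. Once this is secured, the remaining ingredients---Markov iteration, the Schilling ball estimate, and the Jensen comparison $\lambda_1^{B_r}(\Psi(-\Delta)) \leq \Psi(\lambda_1^{B_r}(-\Delta))$---are essentially bookkeeping.
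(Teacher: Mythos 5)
Your proposal is correct, and in parts (ii)--(iii) it takes a genuinely different route from the paper. Part (i) is identical (integrate \eqref{EC3.2A} against $p\,t^{p-1}$). For the lower bound in (ii) the paper argues through the spectrum: it combines part (i) for $p=1$ with domain monotonicity of the classical Dirichlet eigenvalue and the Chen--Song comparison $\lambda_1^\cD\le\Psi(\lambda_{1,\mathrm{Lap}}^\cD)$, plus the scaling inequality \eqref{Psi-mon}; you instead go directly through an inscribed ball and the two-sided Schilling estimate quoted in Remark \ref{doob}, which is shorter and equally valid. For the upper bound in (ii) the paper uses the M\'endez-Hern\'andez rearrangement inequality to dominate $\Prob^x(\uptau_\cD>t)$ by the exit probability from a slab of half-width $\inrad\cD$, reduces to a one-dimensional subordinate Brownian motion, and applies Schilling's estimate -- this is precisely where the complete Bernstein hypothesis (radially decreasing transition density) enters. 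You instead observe, correctly, that the half-space/subordination computation in the proof of Theorem \ref{T3.1} never uses that the starting point is an eigenfunction maximizer, only convexity and $r(x)=\dist(x,\partial\cD)$ (the supporting hyperplane at the nearest boundary point is tangent to $\sB_{r(x)}(x)$, so the distance to the half-space is exactly $r(x)$); since $\Psi$ is increasing and $r(x)\le\inrad\cD$, this gives the uniform bound $\Prob^x(\uptau_\cD>T)\le\delta$ with $T=\kappa/\Psi([\inrad\cD]^{-2})$, and Markov iteration at times $nT$ yields $\sup_x\ex^x[\uptau_\cD]\le T/(1-\delta)$. This buys something: your argument dispenses with the rearrangement inequality and hence with the complete Bernstein assumption in (ii), and the constant is even dimension-free, whereas the paper's reduction ties the constant to the one-dimensional problem. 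For (iii) both proofs have the same skeleton (lower bound is (i) with $p=1$; upper bound combines the exit-time upper bound with $\lambda_1^\cD\le C\,\Psi([\inrad\cD]^{-2})$), the only difference being that you prove the key comparison $\lambda_1^{\sB_r}(\Psi(-\Delta))\le\Psi(\lambda_1^{\sB_r}(-\Delta))$ yourself via the Rayleigh quotient and Jensen's inequality for the concave $\Psi$ (legitimate, since the zero-extended first Laplacian eigenfunction lies in the form domain by $\Psi(u)\le\Psi(1)(1+u)$), while the paper cites Chen--Song for it.
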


\begin{proof}
Let $p \geq 1$. To obtain (i) multiply both sides of \eqref{EC3.2A} by $p t^{p-1}$ and integrate with respect to $t$
over $(0,\infty)$.

Next consider (ii). To prove \eqref{C3.1C} first note that by the domain monotonicity property we have
$$
\lambda_{1, \mathrm{Lap}}^\cD \leq \frac{\kappa_1}{[\inrad \cD]^2}\,,
$$
where $\kappa_1=\lambda_{1, \mathrm{Lap}}^\sB$ is the Dirichlet principal eigenvalue in the unit ball, and
$\lambda_{1, \mathrm{Lap}}^\cD$ denotes the principal Dirichlet eigenvalue of the Laplacian in $\cD$. Therefore
by \cite{Chen-Song} we obtain
$$
\lambda_1^{\cD}  \leq \Psi(\lambda_{1, \mathrm{Lap}}^\cD)\leq \Psi\left(\frac{\kappa_1}{[\inrad \cD]^2}\right).
$$
Thus, using \eqref{C3.1B} for $p=1$, we have
$$
\frac{1}{\Psi(\kappa_1^{-1}[\inrad \cD]^{-2})}\;\leq\;\sup_{x\in \cD} \ex^{x}[\uptau_{\cD}]\,.
$$
From the Laplace transform of $\pro {S^\Psi}$ and the monotonicity of $\Psi$ it is seen that for every
$\delta\geq 1$ we have
\begin{equation}\label{Psi-mon}
\Psi(u)\leq \Psi(\delta u)\leq \delta \Psi(u), \quad \forall\; u\geq 0\,.
\end{equation}
Thus by \eqref{Psi-mon} we get the left hand side of \eqref{C3.1C} with $\kappa^{-1}_1\vee 1=C^{-1}_1$.
To prove the converse implication we use a
result from \cite{Mendez}. Note that since $\Psi$ is a complete Bernstein function, the process $\pro X$ has a
transition density $q(t, x, y) = q(t, x-y)$. Moreover, $q(t, \cdot)$ is radially symmetric and decreasing. Denote
by $r_\cD=\inrad \cD$ and define
$$
S_\cD= \left\{x\in\Rd\; :\; x\in \RR^{d-1}\times(-r_\cD, r_\cD)\right\}\,.
$$
Fix $t>0$ and $z_0\in\cD$. By $\uptau_{S_\cD}$ we denote the first exit time from $S_\cD$. Then
\begin{align*}
\Prob^{z_0}(\uptau_\cD>t) & = \lim_{m\to\infty} \Prob^{z_0}\left(X_{\frac{t}{m}}\in \cD, X_{\frac{2t}{m}}\in \cD, \ldots,
X_{\frac{mt}{m}}\in \cD\right)
\\
&= \lim_{m\to\infty} \int_{\cD}\int_{\cD}\cdots\int_{\cD} \Pi_{j=1}^m q(\frac{t}{m}, z_j-z_{j-1}) \D{z_1}\D{z_2}\cdots\D{z_m}
\\
&\leq  \lim_{m\to\infty} \int_{S_\cD}\int_{S_\cD}\cdots\int_{S_\cD} q(\frac{t}{m}, 0, z_1) \Pi_{j=2}^m q(\frac{t}{m}, z_j-z_{j-1})
\D{z_1}\D{z_2}\cdots\D{z_m}
\\
&= \lim_{m\to\infty} \Prob^{0}\left(X_{\frac{t}{m}}\in S_\cD, X_{\frac{2t}{m}}\in S_\cD, \ldots, X_{\frac{mt}{m}}\in S_\cD\right)
=\Prob^{0}(\uptau_{S_\cD}>t),
\end{align*}
where in the inequality above we used \cite[Th.~1.2]{Mendez}. On the other hand, the first exit time $\uptau_{S_\cD}$ starting from
$0$ is equal in distribution to the first exit time of a one-dimensional subordinate Brownian motion from the interval $\sB_{r_\cD}
= (-r_\cD, r_\cD)$ starting  from $0$. Let $(B^1_{S^\Psi_t})_{t\geq 0}$ be a one-dimensional subordinate Brownian motion, and
$\uptau_{r_\cD}$ be its first exit time from $\sB_{r_\cD}$. The above estimate gives
\begin{equation}\label{C3.1D}
\sup_{x\in\cD} \ex^x[\uptau_\cD]\; \leq \; \ex^0[\uptau_{r_\cD}].
\end{equation}
Since the L\'{e}vy exponent of $(B^1_{S^\Psi_t})_{t\geq 0}$ is given by $\Psi(u^2)$, we obtain from \cite[Rem.~4.8]{Schilling-98} and \eqref{Psi-mon} that
$$
\ex^0[\uptau_{r_\cD}]\leq \frac{C_2}{\Psi(r_\cD^{-2})}\,,
$$
for some universal constant $C_2$. Hence using \eqref{C3.1D} and the above estimate we obtain the right hand side of
\eqref{C3.1C}.

Finally, consider (iii). In view of \eqref{C3.1B} we only need to show the right hand side of \eqref{C3.1C0}. Using \eqref{C3.1D}
and the estimate above, we get that
\begin{equation}\label{C3.1E}
\sup_{x\in\cD} \ex^x[\uptau_\cD]\leq \frac{C_2}{\Psi(r_\cD^{-2})}= \frac{C_2\lambda_1^\cD}{\Psi(r_\cD^{-2})}
\frac{1}{\lambda_1^\cD}\,.
\end{equation}
On the other hand, using \cite{Chen-Song} and the domain monotonicity of the principal eigenvalue, we obtain
$$
\lambda_1^\cD \leq \Psi(\lambda_{1, \mathrm{Lap}}^\cD)\leq \Psi\left(\frac{\kappa_1}{r^2_\cD}\right)\,,
$$
where $\kappa_1=\lambda_{1, \mathrm{Lap}}^\sB$. A combination with \eqref{C3.1E} gives
\begin{equation}\label{C3.1F}
\sup_{x\in\cD} \ex^x[\uptau_\cD]\leq C_2 \sup_{s\in(0, \infty) }\frac{\Psi(\kappa_1 s)}{\Psi( s)}\frac{1}{\lambda_1^\cD}\,.
\end{equation}
Hence using \eqref{C3.1F} and \eqref{Psi-mon} we find
$$
\sup_{x\in\cD} \ex^x[\uptau_\cD]\leq C_2 \left(1\vee \kappa_1\right)\frac{1}{\lambda_1^\cD}\,.
$$
This completes the proof of \eqref{C3.1C0}.
\end{proof}

\begin{remark}
The bound in \eqref{C3.1B} implies for $p=1$ the well-known relation
$$
\lambda_1^\cD \geq \frac{1}{\sup_{x\in \cD} \ex^{x}[\uptau_{\cD}]},
$$
between the principal Dirichlet eigenvalue and the mean survival time in the domain, first obtained by Donsker and Varadhan
for diffusion processes \cite[eq. (1.2)]{DV-76}. For the classical case $\Psi(u) = u$, the lower bound
$$
{\sup_{x\in \cD} \ex^{x}[\uptau_{\cD}]}\lambda_1^\cD \geq {1}
$$
is known to be sharp for any any bounded domain $\cD$  in $\Rd$ \cite{HLP}.
The moment estimates
$$
\sup_{x\in\cD}\ex^x [\uptau^p_\cD] \geq \frac{\Gamma(p+1)}{(\lambda_1^\cD)^p}, \quad p \geq 1,
$$
have also an independent interest, giving bounds on the (integer and fractional) moments of the mean exit time from $\cD$ for
subordinate Brownian motion, which were not known before. Also, using the same \eqref{EC3.2A}, it follows that $\uptau_\cD$
has $p$-exponential moments of order $p < \lambda_1^\cD$, and we have the bound
$$
\sup_{x\in\cD}\ex^x [e^{p\,\uptau_\cD}] \geq \frac{p}{\lambda_1^\cD-p}, \quad p < \lambda_1^\cD.
$$
\end{remark}

\begin{remark}
There is much important work on estimates similar to \eqref{C3.1C}-\eqref{C3.1C0} for cases when the reference domain is
a simply connected set in $\RR^2$ and the stochastic process is Brownian motion. Much effort has been made on finding the
best possible universal constants for these estimates; see, for instance, \cite{Ban-Unpub, BC-94} and the references
therein. For similar estimates for symmetric stable processes we refer to \cite{BLM-01, Mendez}. Corollary ~\ref{C3.2}
extends the earlier results to subordinate Brownian motion, possibly with non-optimal constants.
\end{remark}

\begin{remark}[Hot-spots]
\label{hotspots}
In the literature the location where the solution of the heat equation in a bounded domain at a given time attains its  maximum
is referred to as a \emph{hot-spot}. Identifying possible hot spots in a convex domain is known to be quite challenging and there
is an extensive literature in this direction. In the case of Neumann boundary conditions the solution approaches the second
eigenfunction on the long run, and the so called Rauch-conjecture states that this eigenfunction attains its maximum on the boundary
of the domain, thus the hot-spots in this case are expected to be located on the edge. This conjecture turned out to be more involved
and false in general, but it has been proven to hold under specific assumptions on the domain, see \cite{BaBu99,BW99, JN00} and
references therein. For Dirichlet boundary conditions the situation is different as now the solution of the heat equation tends to
principal eigenfunction as time goes to infinity, and the hot-spot becomes its maximizer, away from the boundary. In
\cite[Th. ~2.8]{BMS-11} it is shown that there exists a constant $c$, dependent only on $d$, such that for any bounded convex set
$\cD$ one has
\begin{equation}\label{ER3.4A}
\dist(x^*, \partial \cD)\;\geq \; c\, \inrad\cD \Bigl(\frac{\inrad\cD}{\diam\cD}\Bigr)^{d^2-1}\,,
\end{equation}
where $x^*$ denotes a hot-spot of the Laplacian in $\cD$ with Dirichlet boundary condition. Note that
Theorem~\ref{T3.1} improves this result substantially. We single this out in the following result.
\end{remark}
\begin{corollary}[\textbf{Hot-spots}]
Let $\Psi(u)=u$, and $\lambda_{1}^\cD$ be the principal Dirichlet eigenvalue of the
Laplacian for the domain $\cD$, and $\sB$ denote the unit ball centered in the origin. Then
$$
\dist(x^*, \partial \cD)\;\geq \; \sqrt{\frac{\theta}{\lambda_{1}^\sB}}\, \inrad\cD \,,
$$
\label{HScorr}
\end{corollary}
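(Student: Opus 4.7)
The plan is to combine Theorem~\ref{T3.1} (specialized to $\Psi(u)=u$ and $V\equiv 0$) with a standard domain-monotonicity and scaling argument for the Dirichlet Laplacian. The whole content of the corollary is to eliminate $\lambda_1^{\cD}$ from the bound \eqref{ET3.1A} in favour of the geometric quantity $\inrad\cD$, using that the unit ball provides an extremal configuration via scaling.

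First I would apply Theorem~\ref{T3.1}, under the current assumptions $\Psi(u)=u$ and $V\equiv 0$, to the principal Dirichlet eigenfunction $\varphi_1^{\cD}$ and its global maximizer $x^*$ (which exists since $\varphi_1^{\cD}$ is strictly positive on $\cD$ and vanishes on the boundary). This immediately gives
\begin{equation*}
\lambda_1^{\cD} \;\geq\; \theta\,\Psi(r^{-2}) \;=\; \frac{\theta}{r^2},
\qquad r=\dist(x^*,\partial\cD),
\end{equation*}
so that $r \geq \sqrt{\theta/\lambda_1^{\cD}}$.

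Next I would bound $\lambda_1^{\cD}$ from above in terms of $\inrad\cD$. Pick any ball $\sB_{\inrad\cD}$ of radius $\inrad\cD$ inscribed in $\cD$. By domain monotonicity of the Dirichlet principal eigenvalue of the Laplacian, $\lambda_1^{\cD}\leq \lambda_1^{\sB_{\inrad\cD}}$, and by the usual scaling $\lambda_{1,\mathrm{Lap}}^{\sB_{\rho}}=\rho^{-2}\lambda_1^{\sB}$ we obtain
\begin{equation*}
\lambda_1^{\cD} \;\leq\; \frac{\lambda_1^{\sB}}{(\inrad\cD)^{2}}.
\end{equation*}

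Finally, substituting this estimate into $r\geq \sqrt{\theta/\lambda_1^{\cD}}$ yields the announced bound
\begin{equation*}
\dist(x^*,\partial\cD)\;\geq\; \sqrt{\frac{\theta}{\lambda_1^{\sB}}}\,\inrad\cD.
\end{equation*}
There is no real obstacle here; the only point worth emphasizing is that the intermediate inequality $\lambda_1^{\cD}(\inrad\cD)^{2}\leq \lambda_1^{\sB}$ is precisely where the normalization by the unit-ball eigenvalue enters, and it is this step that transforms the spectral bound of Theorem~\ref{T3.1} into a purely geometric statement about the location of the hot-spot relative to $\inrad\cD$.
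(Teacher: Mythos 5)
Your proposal is correct and follows essentially the same route as the paper: apply Theorem~\ref{T3.1} (equivalently \eqref{inv}) with $\Psi(u)=u$, $V\equiv 0$ to get $\dist(x^*,\partial\cD)\geq\sqrt{\theta/\lambda_1^{\cD}}$, then use domain monotonicity and scaling to bound $\lambda_1^{\cD}\leq \lambda_1^{\sB}/(\inrad\cD)^2$. No issues.
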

\begin{proof}
By the domain monotonicity property we have
$$
\lambda_1^\cD \;\leq\; \frac{1}{(\inrad\cD)^2}\, \lambda_{1}^\sB.
$$
Using \eqref{inv}, the result follows, possibly with a non-optimal constant.
\end{proof}

\begin{remark}[Universal upper bound on the distance of maximizer]
It is not difficult to see that a reverse inequality to \eqref{ET3.1A} does not hold. Consider the domain
$\cD=[0, \pi]^2$ and the Laplace operator. Then $\varphi_n(x, y) = \sin((2n+1)x)\sin(y)$, $n \in \mathbb N$,
is an eigenfunction with eigenvalue $\lambda_n=(2n+1)^2 + 1$. Note that $\abs{\varphi_n(\frac{\pi}{2},
\frac{\pi}{2})}=1$ and $\dist\left(\left(\frac{\pi}{2}, \frac{\pi}{2}\right), \partial\cD\right)=\frac{\pi}{2}$.
Thus there is no $c > 0$ such that
$$
\frac{\pi}{2}=\dist\left(\left(\frac{\pi}{2}, \frac{\pi}{2}\right), \partial \cD\right)\leq \;
\left(\frac{c}{\lambda_n}\right)^{\nicefrac{1}{2}}\,, \quad \text{for all}\; n \in \mathbb N\,.
$$
We note that $z_n=\big(\frac{\pi}{(2n+1)2}, \frac{\pi}{2}\big)$ is also a maximizer of $\varphi_n$ and
$\dist(z_n, \partial\cD)\sim \lambda_n^{-\nicefrac{1}{2}}$. Therefore an interesting open question is
whether there exists a universal constant $c$ such that the $c\lambda_n^{-\nicefrac{1}{2}}$-neighbourhood
of $\partial\cD$ contains an extremum of the Dirichlet eigenfunction $\varphi_n$.
\end{remark}

\medskip
Inequality \eqref{inv} has another important consequence, which we single out next. Assume that $\Psi$ is
strictly increasing in $(0, \infty)$, denote the Lebesgue measure of $\cD$ by $|\cD|$, and $|\sB|=\omega_d$.
\begin{corollary}[\textbf{Faber-Krahn inequality}]
\label{FKineq}
Under the conditions of Theorem \ref{T3.1} we have
\begin{equation}
\label{FKinequ}
\abs{\cD} \left(\Psi^{-1}\left(\frac{\norm{V^-}_\infty- \inf_{\cD} V^+ +\lambda^{V, \cD}}{\theta}\right)\right)^{\nicefrac{d}{2}}\geq \omega_d.
\end{equation}
\end{corollary}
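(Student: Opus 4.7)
The plan is to combine the distance bound in \eqref{inv} from Theorem \ref{T3.1} with the elementary observation that, since $r = \dist(x^*, \partial \cD)$, the open ball $\sB_r(x^*)$ is contained in $\cD$. Hence by monotonicity of Lebesgue measure,
$$
\abs{\cD} \;\geq\; \abs{\sB_r(x^*)} \;=\; \omega_d\, r^d.
$$

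Next, since $\Psi$ is assumed strictly increasing on $(0,\infty)$, the bound \eqref{inv} of Theorem \ref{T3.1} applies, yielding
$$
r \;\geq\; \frac{1}{\sqrt{\Psi^{-1}\!\left(\frac{\norm{V^-}_\infty - \inf_\cD V^+ + \lambda^{V,\cD}}{\theta}\right)}}.
$$
Here we may replace $\norm{V^-}_{\infty,\cD}$ appearing in \eqref{inv} by $\norm{V^-}_\infty$ (the $L^\infty$ norm on $\Rd$) since the former is bounded above by the latter, so the right-hand side of \eqref{inv} only gets smaller, preserving the inequality. Raising to the $d$-th power and using the previous display gives
$$
\abs{\cD} \;\geq\; \omega_d \left(\Psi^{-1}\!\left(\frac{\norm{V^-}_\infty - \inf_\cD V^+ + \lambda^{V,\cD}}{\theta}\right)\right)^{-d/2},
$$
which is exactly \eqref{FKinequ} after rearrangement.

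There is essentially no obstacle here: the only subtleties are (i) checking that the argument of $\Psi^{-1}$ is in its range (which follows because the left-hand side of \eqref{ET3.1A} is non-negative so that $\Psi^{-1}$ is evaluated on $[0,\infty)$, where $\Psi^{-1}$ is well-defined by strict monotonicity), and (ii) ensuring that the maximizer $x^*$ exists so that one may speak of $r$; this is guaranteed by the continuity of $|\varphi|$ on $\cD$ together with the assumption of Theorem \ref{T3.1} that a global maximum is attained at an interior point $x^* \in \cD$. Once these are noted, the corollary follows immediately as a geometric repackaging of the distance bound \eqref{inv}.
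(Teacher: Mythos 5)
Your proof is correct and is essentially identical to the paper's: the paper likewise notes $\sB_r(x^*)\subset\cD$, so $\abs{\cD}\geq\omega_d r^d$, and then substitutes the lower bound on $r$ from \eqref{inv}. The extra remarks you add (replacing $\norm{V^-}_{\infty,\cD}$ by the larger $\norm{V^-}_\infty$, and existence of $x^*$) are harmless refinements of the same argument.
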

\begin{proof}
Since $\sB_r(x^*)\subset\cD$ whenever $r = \dist(x^*,\partial\cD)$, using \eqref{inv} it is immediate that
$$
\abs{\cD}\geq |\sB_r(x^*)| \geq
\omega_d\left(\Psi^{-1}\left(\frac{\norm{V^-}_\infty- \inf_{\cD} V^+ +\lambda^{V, \cD}}{\theta}\right)\right)^{-\nicefrac{d}{2}}.
$$
\end{proof}
\noindent
The Faber-Krahn inequality has been previously known only for the classical case $\Psi(u)=u$ \cite[Th. 1.1]{CH12}, and for the
fractional case $\Psi(u)=u^{\nicefrac{1}{2}}$.

\begin{remark}[Torsion]
Recall the notation $H_0 = \Psi(-\Delta)$, and consider the non-local Dirichlet problem
\begin{equation*}
\left\{\begin{array}{ll}
H^{\cD}_0 v = 1, \quad \text{in}\, \; \cD \\
\quad \;\; v = 0,\, \quad \text{in}\,\; \cD^c.
\end{array} \right.
\end{equation*}
The function $v$ is called torsion, and recently it has been noticed that its maximizer and the maximizer $x^*$ of
the principal Dirichlet eigenfunction of $H_0$ are located very near to each other, though do not coincide. This
puzzling phenomenon has been discussed in \cite{BLMS}, see also the references therein. Note that the solution has
the immediate probabilistic meaning $v(x)=\ex^x[\uptau_\cD]$. It is immediate from Corollaries~\ref{C3.1}-\ref{C3.2}
above that with a constant $C = C(d) > 0$, we have
$$
\sup_{\cD} v(x)\leq C v(x^*),
$$
A similar result was obtained in \cite[Cor.~2]{RS-17} for the case of the classical Laplacian in dimension $2$.
Moreover, for $\Psi(u)=u$, an estimate similar to \eqref{C3.1C0} is also known \cite{VDB}.
\label{tors}
\end{remark}

\section{Compactly supported potentials}
In this section we consider the eigenvalue problems \eqref{dirichev}-\eqref{fullev} for the special choice of
bounded potentials with compact support. In case $V = -v\Ind_\cK$ with a bounded set $\cK \subset \Rd$ with
non-empty interior, we say that $V$ is a potential well with coupling constant $v > 0$.

Concerning the eigenvalue problem in $L^2(\Rd)$, recall that the non-local Schr\"odinger operator $H = \Psi(-\Delta)
+V$ admits a Feynman-Kac representation \cite{HIL12} of an eigenfunction $\varphi$ in the form
$$
e^{-tH}\varphi(x) = e^{\lambda t}\ex^x[e^{-\int_0^t V(X_s)ds}\varphi(X_t)],  \quad x \in \Rd, \, t \geq 0.
$$
For a potential well $- v \Ind_\cK$ this becomes specifically
$$
e^{-tH}\varphi(x) = e^{\lambda t}\ex^x[e^{vU_t^\cK(X)}\varphi(X_t)],
$$
where
$$
U_t^\cK(X) = \int_0^t \Ind_\cK(X_s) ds
$$
is the occupation measure of the set $\cK$ by subordinate Brownian motion $\pro X$.

For non-local Schr\"odinger operators $H$ above the semigroup $\semi T$, $T_t = e^{-tH}$, is well-defined and
strongly continuous. For all $t>0$, every $T_t$ is a bounded operator on every $L^p(\Rd)$ space, $1 \leq p \leq
\infty$. The operators $T_t: L^p(\Rd) \to L^p(\Rd)$ for $1 \leq p \leq \infty$, $t > 0$, and $T_t: L^p(\Rd) \to
L^{\infty}(\Rd)$ for $1 < p \leq \infty$, $t \geq t_{\rm b}$, and $T_t: L^1(\Rd) \to L^{\infty}(\Rd)$ for $t
\geq 2t_{\rm b}$ are bounded, with some $t_{\rm b} \geq 0$. Also, for all $t \geq 2t_{\rm b}$, $T_t$ has a bounded
measurable kernel $u(t, x, y)$ symmetric in $x$ and $y$, i.e., $T_t f(x) = \int_{\Rd} u(t,x,y)f(y)dy$, for all $f
\in L^p(\Rd)$ and $1 \leq p \leq \infty$. For all $t>0$ and $f \in L^{\infty}(\Rd)$, $T_t f$ is a bounded continuous
function. Thus the eigenfunctions solving \eqref{fullev} are bounded and continuous, whenever they exist. Also, they
have a pointwise decay to zero at infinity. For a subclass of subordinate Brownian motions it is known that the
eigenfunctions decay at a rate determined by the L\'evy density of $\pro X$. For further details we refer to \cite{KL17}.

Since these potentials are relatively compact perturbations of $H_0=\Psi(-\Delta)$, the essential spectrum is preserved,
and thus we have $\Spec H = \Spec_{\rm ess} H \cup \Spec_{\rm d}H$, with $\Spec_{\rm ess} H = \Spec_{\rm ess} H_0 =
[0 ,\infty)$. The existence of a discrete component depends on further details of the operator. Generally,
$\Spec_{\rm d} H \subset (-v, 0)$, and $\Spec_{\rm d} H$ consists of at most a countable set of isolated eigenvalues
of finite multiplicity whenever it is non-empty, with possible accumulation point up to zero. For non-negative compactly
supported potentials it is known that $\Spec_{\rm d} H \neq \emptyset$ if $\pro X$ is a recurrent process \cite{CMS90}. For
potential wells this means that at least an eigenfunction exists for every $v > 0$ when $\pro X$ recurrent, on the other
hand, it is also possible to show that for transient processes eigenfunctions do not exist if $v$ is too small, but there
is at least one if $v$ is large enough.


For the remainder of this section we assume that an eigenfunction exists in either case \eqref{dirichev}-\eqref{fullev},
which is thus bounded and continuous. The following result applies for both eigenvalue problems.

\begin{theorem}\label{T4.1}
Let $\cD$ be $\Rd$ or a bounded subset of $\Rd$, $V$ be a convex increasing function attaining a global minimum
at $\hat{x}\in\cD$, and consider the respective eigenvalue equations with a pair $\lambda$ and $\varphi$ such that
$\lim_{x\to z\in\partial \cD} \varphi(x)=0$. Furthermore, let $x^*$ be the location of a global maximum of $\abs{\varphi}$,
and consider the set
$$
\sU_\lambda = \{x \in \cD: \, V(x) \leq \lambda\}\cap\cD.
$$
Then we have $x^*\in \sU_\lambda$ and hence,
$$
\dist(x^*, \hat{x})\leq \max_{z\in\partial \sU_\lambda}\dist(x^*, z).
$$
\end{theorem}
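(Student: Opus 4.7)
The plan is to split the statement into (a) a spectral inequality forcing $x^*\in \sU_\lambda$ and (b) a purely geometric consequence of the convexity of $\sU_\lambda$.

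\emph{Step 1: $V(x^*)\leq \lambda$.} After possibly replacing $\varphi$ by $-\varphi$, which solves the same eigenvalue equation, I may assume $\varphi(x^*)>0$, so that $x^*$ is a global maximum of $\varphi$ itself. The identity $e^{-tH}\varphi = e^{-\lambda t}\varphi$ combined with the Feynman--Kac representation in \eqref{killedV} (the exit-time indicator being vacuous when $\cD = \Rd$) gives
\begin{equation*}
\varphi(x^*) \;=\; e^{\lambda t}\,\ex^{x^*}\!\left[e^{-\int_0^t V(X_s)\,\D s}\,\varphi(X_t)\,\Ind_{\{\uptau_\cD>t\}}\right], \qquad t\geq 0.
\end{equation*}
Bounding $\varphi(X_t)\Ind_{\{\uptau_\cD>t\}}\leq \varphi(x^*)$ and dividing out $\varphi(x^*)$ yields
\begin{equation*}
1 \;\leq\; e^{\lambda t}\,\ex^{x^*}\!\left[e^{-\int_0^t V(X_s)\,\D s}\right], \qquad t>0.
\end{equation*}
Since $V$ is convex on $\cD$ it is continuous at the interior point $x^*$, and by right-continuity of $\pro X$ together with dominated convergence one has the expansion $\ex^{x^*}[e^{-\int_0^t V(X_s)\,\D s}] = 1 - tV(x^*) + o(t)$ as $t\downarrow 0$. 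Matching first-order terms in the previous inequality forces $\lambda - V(x^*)\geq 0$, i.e.\ $x^*\in\sU_\lambda$.

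\emph{Step 2: geometric bound.} Convexity of $V$ guarantees that $\sU_\lambda = \{V\leq \lambda\}\cap\cD$ is convex. It contains both $x^*$ (Step 1) and $\hat x$, since $V(\hat x) = \min V \leq V(x^*)\leq \lambda$. Because $V$ is increasing away from $\hat x$, the ray emanating from $x^*$ and passing through $\hat x$ leaves $\sU_\lambda$ at a first boundary point $z^*\in\partial\sU_\lambda$ lying beyond $\hat x$, whence
\begin{equation*}
\dist(x^*,\hat x) \;\leq\; \dist(x^*, z^*) \;\leq\; \max_{z\in\partial\sU_\lambda}\dist(x^*, z),
\end{equation*}
as required.

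\emph{Main obstacle.} The only delicate point is justifying the first-order expansion in Step 1 with enough care for bounded $V$ that may be merely continuous at $x^*$; this amounts to $s^{-1}\ex^{x^*}[\int_0^s V(X_u)\,\D u]\to V(x^*)$ as $s\downarrow 0$, which is standard. A cleaner route bypassing any limit procedure is the pointwise non-local maximum principle for $\Psi(-\Delta)$: at an interior positive maximum of $\varphi$ one has $\Psi(-\Delta)\varphi(x^*)\geq 0$, so the eigenvalue equation $\Psi(-\Delta)\varphi(x^*) = (\lambda - V(x^*))\varphi(x^*)$ combined with $\varphi(x^*)>0$ immediately yields $V(x^*)\leq\lambda$. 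Either route completes Step 1, and Step 2 is elementary.
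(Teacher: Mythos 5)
Your proof is correct, but Step 1 takes a genuinely different route from the paper's. The paper argues by contradiction: assuming $x^*\in\cD\cap\sU_\lambda^c$, it applies the strong Markov property at the exit time $\uptau_{\sU_\lambda^c}$ inside the Feynman--Kac identity, writing
$$
\varphi(x^*)=\ex^{x^*}\!\left[e^{-\int_0^{t\wedge\uptau_{\sU_\lambda^c}}(V(X_s)-\lambda)\,\D s}\,\varphi(X_{t\wedge\uptau_{\sU_\lambda^c}})\,\Ind_{\{t\wedge\uptau_{\sU_\lambda^c}<\uptau_\cD\}}\right],
$$
then bounds $\varphi$ by $\varphi(x^*)$ and observes that, since $V-\lambda>0$ strictly on $\sU_\lambda^c\cap\cD$ and $\uptau_{\sU_\lambda^c}>0$ a.s., the surviving expectation is strictly below $1$ --- a contradiction. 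Your argument instead keeps the time horizon fixed in the global Feynman--Kac inequality $1\le e^{\lambda t}\ex^{x^*}[e^{-\int_0^t V}]$, Taylor-expands at $t\downarrow 0$, and matches the $O(t)$ coefficient. Your alternative via the pointwise non-local maximum principle $\Psi(-\Delta)\varphi(x^*)\ge 0$ is yet a third route, avoiding probability altogether.

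Each approach buys something different. The paper's contradiction scheme is order-zero in $t$ and exploits only the \emph{sign} of $V-\lambda$ on a stopped interval, so it survives for $V$ merely measurable (Kato-class) with no continuity at $x^*$, and needs no control of $\ex[(\int_0^t V)^2]$. Your Taylor expansion is more transparent and gives the conclusion directly rather than by contradiction, but it requires continuity of $V$ at $x^*$ (which convexity grants in the interior) and a dominated-convergence/second-moment bound that is routine for bounded $V$ but becomes a genuine technical burden in the full-space case where a convex $V$ with a minimum is necessarily unbounded --- you flag this honestly. Your maximum-principle route is the conceptually cleanest, but the pointwise evaluation $\Psi(-\Delta)\varphi(x^*)$ presupposes enough interior regularity of $\varphi$ (e.g.\ local H\"older plus global boundedness), which the paper never needs to invoke. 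On Step 2 both you and the paper treat the geometric consequence as essentially elementary; your ray argument is a correct justification, with the small caveat that when $\sU_\lambda$ is unbounded the ray may never exit, but then the supremum over $\partial\sU_\lambda$ is infinite and the inequality is vacuous, so nothing is lost.
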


\begin{proof}
Note that we only need to show that $x^*\in \sU_\lambda$. Assume, to the contrary, that $x^*\in \cD\cap \sU_\lambda^c$.
Clearly, we have $V >\lambda$ on $\cD\cap \sU_\lambda^c$.  Also, we may assume that $\varphi(x^*)>0$. Using the strong
Markov property in the Feynman-Kac representation, we find that
$$
\varphi(x^*)=\ex^{x^*}\left[e^{-\int_0^{t\wedge\uptau_{\sU^c_\lambda}} (V(X_s)-\lambda)\, \D{s}}
 \varphi(X_{t\wedge\uptau_{\sU_\lambda}})\Ind_{\{t\wedge\uptau_{\sU^c_\lambda}<\uptau_\cD\}}\right]\,,
$$
which implies,
$$
1 \leq
\ex^{x^*}\left[e^{-\int_0^{t\wedge\uptau_{\sU_\lambda}} (V(X_s)-\lambda)\,\D{s}}
\Ind_{\{t\wedge\uptau_{\sU^c_\lambda}<\uptau_\cD\}}\right]\,, \quad \forall \; t>0\,.$$
However, the above is not possible and hence this is a contradiction.
\end{proof}
\noindent
We note that in \cite{B17} a related question has been addressed for classical Dirichlet-Schr\"odinger equations in convex
planar domains.

\begin{remark}\label{R4.1}
It should noted that the convexity of $V$ is not used to find the location of the maximizer. For instance, if we have $V$
compactly supported inside $\cD$ and $\lambda < 0$, then the same proof above shows that $x^*\in\supp V$.
\end{remark}

Next we consider a situation in a bounded domain. The following result shows how far from the support of a not sufficiently
negative potential inside a bounded domain a maximizer can move out. This may be compared with Theorem~\ref{T3.1}, in
particular, it will be seen that the effect of the potential is exercised by the eigenvalue alone. We will use the following
condition on $\Psi$ repeatedly, which we single out here.
\begin{assumption}
\label{HT4.1}
Let $\Psi \in \mathcal B_0$. We assume that for every $\gamma_0 > 0$
\begin{equation}
\lim_{s\to 0} \, \sup_{\gamma\in[\gamma_0, \infty)}\, \frac{\Psi(s\gamma)}{\Psi(\gamma)}=0\, 
\end{equation}
holds.
\end{assumption}
\noindent
We will comment and give some examples of Bernstein functions satisfying this assumption following the proof of our
next main result.

To explain our next result, consider a potential $V$ compactly supported in $\cD$. Note that the lower bound in
\eqref{inv} uses the $L^\infty$ norm of $V$ and therefore it is difficult to say how the size of the support
of $V$ influences the location $x^*$. In the next result we make an attempt in this direction. In particular,
we show that if $x^*$ stays for some reason sufficiently far from $\supp V$, then the lower bound in \eqref{inv}
improves. While the assumption may not be easily verifiable at this stage, we find the conclusion interesting as
it highlights a mechanism of the delicate balance phenomenon driving the maximizer $x^*$ to stabilise.

\begin{theorem}\label{T3.2}
Let $\Psi$ satisfy Assumption \ref{HT4.1}, and $V$ be a potential with compact support $\supp V = \cK$. Consider a convex
bounded domain $\cD \subset \Rd$, containing $\cK$, and let $\dist(\cK, \partial\cD)=\kappa>0$. Also, let $\varphi$ be an
eigenfunction at eigenvalue $\lambda>0$ solving \eqref{dirichev}, and suppose it is known about a global maximizer $x^*$
of $\abs{\varphi}$ that $\dist(x^*, \partial\cD) \leq\kappa/2$. Then there exists a constant $\zeta > 0$, dependent on $d$,
$\kappa$ and $\Psi$, but not on $\cD$, $\cK$, $\varphi$ or $\lambda$, such that
\begin{equation}\label{ET4.1A}
\dist(x^*, \partial\cD) \geq \frac{1}{\sqrt{\Psi^{-1}\left(\frac{\lambda}{\zeta}\right)}}.
\end{equation}
\end{theorem}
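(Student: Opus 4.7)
The plan is to adapt the Feynman--Kac argument of Theorem~\ref{T3.1} by stopping the process at the first hitting time of $\cK$ rather than at a deterministic time. The crucial observation is that on paths which have not yet entered $\cK$, the Feynman--Kac exponential is identically one, so the dependence on $\|V^-\|_\infty$ simply vanishes; in its place one picks up the probability of hitting $\cK$, which is small because $x^*$ is far from $\cK$. Indeed, by the triangle inequality together with $\dist(\cK,\partial\cD)=\kappa$, one has $\dist(x^*,\cK)\geq\kappa-r\geq\kappa/2$, so $\sB_{\kappa/2}(x^*)\cap\cK=\emptyset$.

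First, I would apply the strong Markov property to the eigenvalue identity $\varphi(x^*) = e^{\lambda t}\ex^{x^*}[e^{-\int_0^t V(X_s)\D s}\varphi(X_t)\Ind_{\{t<\uptau_\cD\}}]$ at the stopping time $\sigma := \uptau_\cK \wedge t$. Since $V(X_s)=0$ for $s<\uptau_\cK$, the exponential integral vanishes on $[0,\sigma]$, and using $\varphi=0$ on $\partial\cD$ one obtains
\begin{equation*}
\varphi(x^*) = \ex^{x^*}\!\left[e^{\lambda \sigma}\,\varphi(X_\sigma)\,\Ind_{\{\sigma<\uptau_\cD\}}\right].
\end{equation*}
Taking absolute values, using $|\varphi(X_\sigma)|\leq|\varphi(x^*)|$ together with $\sigma\leq t$, and decomposing $\{\sigma<\uptau_\cD\}\subset\{\uptau_\cK\leq t\}\cup\{t<\uptau_\cD\}$, yields the key inequality
\begin{equation*}
e^{-\lambda t} \;\leq\; \Prob^{x^*}(\uptau_\cK \leq t) + \Prob^{x^*}(t < \uptau_\cD).
\end{equation*}

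Next I would choose $t=\kappa_1/\Psi(r^{-2})$ for a fixed $\kappa_1>1$. The second probability is handled exactly as in the proof of Theorem~\ref{T3.1}: the exterior half-space tangent to $\partial\cD$ at the nearest boundary point (available by convexity of $\cD$), combined with subordination and the Laplace identity $\ex_{\Prob_S}[e^{-uS_t}]=e^{-t\Psi(u)}$, gives $\Prob^{x^*}(t<\uptau_\cD)\leq 1-F(-1)(1-e^{1-\kappa_1})=:\delta_1<1$, uniformly in $r$. For the first, the inclusion $\uptau_\cK\geq\uptau_{\sB_{\kappa/2}(x^*)}$ and a Pruitt-type exit estimate for subordinate Brownian motion yield
\begin{equation*}
\Prob^{x^*}(\uptau_\cK \leq t) \;\leq\; C_d\,t\,\Psi\bigl((\kappa/2)^{-2}\bigr)\;=\;C_d\kappa_1\,\frac{\Psi((\kappa/2)^{-2})}{\Psi(r^{-2})}.
\end{equation*}
Writing $(\kappa/2)^{-2}=s\cdot r^{-2}$ with $s=(2r/\kappa)^2\to 0$ as $r\to 0$, Assumption~\ref{HT4.1} with $\gamma_0=(\kappa/2)^{-2}$ produces an $r_0\in(0,\kappa/2]$, depending only on $d,\kappa,\Psi,\kappa_1$, such that the ratio above is at most $(1-\delta_1)/2$ for all $r\leq r_0$. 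Combining gives $e^{-\lambda t}\leq(1+\delta_1)/2$, and taking logarithms yields $\lambda\geq\zeta\,\Psi(r^{-2})$ with $\zeta:=\kappa_1^{-1}\log(2/(1+\delta_1))>0$, which rearranges to \eqref{ET4.1A}.

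The remaining range $r\in(r_0,\kappa/2]$ is covered by shrinking $\zeta$ if necessary (still depending only on $d,\kappa,\Psi$) so that $1/\sqrt{\Psi^{-1}(\lambda/\zeta)}\leq r_0<r$ holds whenever the bound to be proved is non-trivial; then \eqref{ET4.1A} holds automatically. The principal technical obstacle I anticipate is the Pruitt-type exit-time estimate from a ball for subordinate Brownian motion and tracking its constant under the sole regularity of Assumption~\ref{HT4.1}; it is worth emphasising that the assumption, rather than mere unboundedness of $\Psi$, is exactly what delivers the uniform-in-$r$ smallness of $\Psi((\kappa/2)^{-2})/\Psi(r^{-2})$ needed to produce a single constant $\zeta$ depending only on $d,\kappa,\Psi$.
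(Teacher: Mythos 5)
Your reduction via the strong Markov property at $\sigma=\uptau_\cK\wedge t$, giving $e^{-\lambda t}\leq \Prob^{x^*}(\uptau_\cK\leq t)+\Prob^{x^*}(t<\uptau_\cD)$, is correct, and both ingredients you invoke are sound: the half-space estimate from the proof of Theorem \ref{T3.1}, and a Pruitt-type bound $\Prob^{x^*}(\uptau_{\sB_{\kappa/2}(x^*)}\leq t)\leq C_d\,t\,\Psi((\kappa/2)^{-2})$, which does hold for subordinate Brownian motion (condition on the subordinator, use the Gaussian maximal estimate and $1-e^{-t\Psi(u)}\leq t\Psi(u)$). The genuine gap is the range of $r$ your argument covers. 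With $t=\kappa_1/\Psi(r^{-2})$ the hitting term equals $C_d\kappa_1\,\Psi((\kappa/2)^{-2})/\Psi(r^{-2})$, which is $\leq(1-\delta_1)/2$ only when $\Psi(r^{-2})$ is large compared with $\Psi((\kappa/2)^{-2})$, i.e.\ only for $r\leq r_0$ with some $r_0=r_0(d,\kappa,\Psi,\kappa_1)<\kappa/2$. On $(r_0,\kappa/2]$ the union bound gives nothing: both probabilities can be of order one on the time scale $1/\Psi(r^{-2})$, and you cannot rebalance by taking $t$ smaller, because the half-space estimate needs $t\Psi(r^{-2})>1$ and there is no general lower bound of the form $\Prob_S(S^\Psi_t\geq r^2)\gtrsim t\Psi(r^{-2})$ for small $t\Psi(r^{-2})$ under Assumption \ref{HT4.1} alone. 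Your patch for this range, ``shrink $\zeta$ so that $1/\sqrt{\Psi^{-1}(\lambda/\zeta)}\leq r_0$'', is circular: that inequality is equivalent to $\lambda\geq\zeta\Psi(r_0^{-2})$, and the only a priori information is $\lambda>0$; producing exactly such a quantitative lower bound on $\lambda$ when $r$ is comparable to $\kappa$ is part of what the theorem asserts, and no choice of $\zeta$ depending only on $d,\kappa,\Psi$ makes it automatic.

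This uncovered regime is precisely where the paper's proof does more work. It fixes $t=c/\Psi(r^{-2})$ with $c$ large and shows, uniformly in $r\in(0,\kappa/2]$, that with probability bounded below the path exits $\cD$ before time $t$ \emph{without} entering $\cK$: the subordinator is confined to a window $S^\Psi_t\in[r^2,r^2T_\circ]$ (the upper end is obtained from Assumption \ref{HT4.1} via a uniform Tauberian theorem; note that this, rather than the smallness of $\Psi((\kappa/2)^{-2})/\Psi(r^{-2})$, is where the assumption is really needed --- along your family $s\gamma$ is constant, so your step uses only unboundedness of $\Psi$), while the Brownian path is confined, by the Stroock--Varadhan support theorem and scaling, to a tube of width $r\delta$ around an explicit ray through the nearest boundary point; this tube avoids $\cK$ because $\cK$ lies at depth at least $\kappa$ behind the supporting hyperplane there. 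Feynman--Kac stopped at $t\wedge\uptau_{\cK^c}$ then gives $1\leq e^{\lambda t}(1-c')$ with $c'>0$ independent of $r$, hence the bound for every admissible $r$. Your argument, as written, proves the theorem only for $r\leq r_0$ and leaves $(r_0,\kappa/2]$ unproved.
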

\begin{proof}
Denote $r=\dist(x^*, \partial\cD)$, and
without loss of generality assume that $x^*=0$.
Let $t=\frac{c}{\Psi(r^{-2})}$, where the constant $c$ will be chosen below. From the proof of Theorem~\ref{T3.1} it
follows that we can choose $c$ large enough to satisfy
\begin{equation}\label{ET4.1B}
\Prob_{S}(S^\Psi_{t}< r^2)<\frac{1}{4}, \quad \forall\; r>0\,.
\end{equation}
Fix this choice of $c$ and define $T_c=\frac{c}{\Psi(4\kappa^{-2})}$.  Since $r \leq \frac{\kappa}{2}$, we have $t\leq T_c$.
Using \eqref{HT4.1} we show below that there exists $T_\circ>1$ such that
$$
\Prob_S(S^\Psi_t\leq  r^2 T_\circ)\geq \frac{1}{2}, \quad \text{for all}\;\, 0<\;r<\; \frac{\kappa}{2}\,.
$$
Define $Y_r=\frac{1}{r^2}S^\Psi_t$. Then the Laplace transform of $Y_r$ is given by
\begin{equation}\label{ET4.1C}
\Breve{f}(s)= \ex\left[e^{-sY_r}\right]= \ex\left[e^{-\frac{s}{r^2}S^\Psi_t}\right]=e^{-\frac{c}{\Psi(r^{-2})}\Psi(sr^{-2})}\,.
\end{equation}
Since $r<\kappa/2$, using \eqref{HT4.1} and \eqref{ET4.1C} we see that $\Breve{f}(s)\to 1$ as $s\to 0$, uniformly in
$r\in(0, \frac{\kappa}{2}]$. Thus by the uniform Tauberian theorem \cite[Th.~3]{Lai}, we obtain
$$
\Prob_S(Y_r\leq y)\to 1 \quad \text{as}\; y\to\infty, \quad \text{uniformly in}\; r\in(0, \frac{\kappa}{2}].
$$
Hence we can find $T_\circ>1$ satisfying
\begin{equation}\label{ET4.1D}
\Prob_S(S^\Psi_t\leq r^2 T_\circ)\geq \frac{1}{2}, \quad \text{for all}\;\, 0<r<\frac{\kappa}{2}\,.
\end{equation}
Combining \eqref{ET4.1B} and \eqref{ET4.1D} we obtain that
\begin{equation}\label{ET4.1E}
\Prob_{S}\left(S^\Psi_{t}\in [r^2, r^2 T_\circ]\right) > \frac{1}{4}, \quad \forall\; r\in(0, \frac{\kappa}{2}]\,.
\end{equation}
Now we fix the above choice of $T_\circ$, which depends on $c, \kappa$ and $\Psi$. On the other hand, since $\cD$ is convex, we
may assume that there exists a point $z_0\in\partial\cD$ such that $\dist(0, z_0)=r$, $z_0$ lies on the $x_1$-axis and
$\cD$ lies of the on the complement of the half-space $\{y\in\Rd :\, z_0\cdot y \geq r^2\} $. Define $\chi:[0, T_\circ]\to \Rd$
by
$$
\chi(s)=2\sqrt{s}e_1,
$$
where $e_1$ is the unit vector along the $x_1$-axis. Note that $\dist(z_0, \chi(r^2))=r$ and $z_0\cdot\chi(r^2)=2r^2$. Define
for $\delta\in (0, \frac{\kappa}{4}\wedge\frac{1}{2})$
$$
N_\delta = \left\{f\in \cC([0, T_\circ], \Rd)\; :\; f(0)=0 \;\, \text{and} \; \max_{s\in[0, T_\circ]}\abs{f(s)-\chi(s)}<\delta\right\}\,,
$$
i.e., a $\delta$-neighbourhood of $\chi$ in $\cC_0([0, T_\circ],\Rd)$, the space of $\Rd$-valued continuous functions on
$[0, T_\circ]$ with value $0$ at $s=0$. By the Stroock-Varadhan support theorem it follows that there exists $\delta_1 > 0$
such that
\begin{equation}\label{ET4.1F}
\Prob^0_W\left(\frac{1}{r}B_{r^2s} \in N_\delta\right) = \Prob^0_W(B_s\in N_\delta)=\delta_1>0\,.
\end{equation}
Note the equivalence of the events
\begin{align*}
\left\{\max_{s\in[0, T_\circ]} \Big|\frac{1}{r}B_{r^2s} -\chi(s)\Big| < \delta \right\}
= \left\{\max_{s\in[0, T_\circ]} \abs{B_{r^2s} -r \chi(s)}< r\delta \right\}
= \left\{\max_{s\in[0, T_\circ r^2]} \abs{B_{s} - \chi(s)}< r\delta \right\},
\end{align*}
where in the last equality we used that $r\chi(s)=\chi(r^2s)$. Thus we find
\begin{equation}\label{ET4.1G}
\Prob^0_W\left(\max_{s\in[0,  r^2 T_\circ]} \abs{B_{s} - \chi(s)}< r\delta\right)=\delta_1\,.
\end{equation}
Combining \eqref{ET4.1E} and \eqref{ET4.1G} we have
\begin{align}\label{ET4.1H}
&
\Prob^{0}\left((\omega, \varpi) : \sup_{s\in[0, t]}\abs{B_{S^\Psi_s}-\chi(S^\Psi_s)}< r\delta, \,
S^\Psi_t\in [r^2, r^2 T_\circ]\right)
\\
&\qquad
\geq \Prob^{0}\left((\omega, \varpi) : \max_{s\in[0, r^2 T_\circ]} \abs{B_{s}(\omega) - \chi(s)}<
r\delta, S^\Psi_t(\varpi)\in [r^2, r^2 T_\circ]\right)\nonumber
\\
&\qquad
= \Prob^0_W\left(\max_{s\in[0, r^2 T_\circ]} \abs{B_{s} - \chi(s)}< r\delta\right)\,
\Prob_S(S^\Psi_t\in [r^2, r^2 T_\circ]) \; \geq \; \frac{\delta_1}{4}
\nonumber,
\end{align}
where the third line follows from the independence of Brownian motion and the subordinator. By the construction of
$\chi$ it is seen that every path satisfying
$$
\sup_{s\in[0, t]}\abs{B_{S^\Psi_s}-\chi(S^\Psi_s)}< r\delta, \;\; S^\Psi_t\in [r^2, r^2 T_\circ],
$$
must leave $\cD$ by time $t$ since $B_{S^\Psi_t}\in\cD^c$, and it does not enter $\cK$ in the time interval $[0,t]$.
Thus by \eqref{ET4.1H} we obtain
\begin{equation}\label{ET4.1I}
\Prob^0(\uptau_\cD\leq  t\wedge \uptau_{\cK^c})>\frac{\delta_1}{4}\,.
\end{equation}
Then by the Feynman-Kac formula and the strong Markov property it follows that
$$
\varphi(0)=\ex^0\left[e^{\lambda (t\wedge\uptau_{\cK^c})} \varphi(X_{t\wedge\uptau_\cK})\Ind_{\{t\wedge\uptau_{\cK^c}<\uptau_\cD\}}\right]
\leq \varphi(0) e^{\lambda t} \Prob^0(t\wedge\uptau_{\cK^c}<\uptau_\cD)\leq \varphi(0) e^{\lambda t} (1-\frac{\delta_1}{4})\,,
$$
using \eqref{ET4.1I}. By taking logarithms both sides, we obtain \eqref{ET4.1A}.
\end{proof}

There is a large family of subordinate Brownian motions satisfying Assumption \ref{HT4.1}. First we show a general
statement and then illustrate it by some important examples.
\begin{lemma}\label{L3.5}
Suppose that $\Psi$ is unbounded and regularly varying at infinity, i.e., with a slowly varying function $\ell$
and constant $\beta>0$ we have
$$
\Psi(u) \asymp u^\beta \ell(\beta), \quad \text{for all large} \; u.
$$
Then Assumption \ref{HT4.1} holds.
\end{lemma}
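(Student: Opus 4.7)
The strategy is to combine Potter's bounds for regularly varying functions with the fact that $\Psi\in\mathcal{B}_0$ (so $\Psi(u)\to 0$ as $u\downarrow 0$) and that $\Psi$ is unbounded at infinity. Since $\Psi(u)\asymp u^\beta \ell(u)$ for large $u$ with $\beta>0$ and $\ell$ slowly varying, $\Psi$ itself is regularly varying of index $\beta$, up to bounded multiplicative constants. Hence, for any $\delta\in(0,\beta)$ and $A>1$, Potter's theorem provides a threshold $u_0=u_0(\delta,A)>0$ such that for every $s\in(0,1)$ and every $\gamma$ with $s\gamma\geq u_0$ and $\gamma\geq u_0$,
\[
\frac{\Psi(s\gamma)}{\Psi(\gamma)} \;\leq\; A\, s^{\beta-\delta}.
\]

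Fix $\gamma_0>0$ and an arbitrary $\epsilon>0$. I would split $[\gamma_0,\infty)$ into three pieces, each controlled by a distinct mechanism. On the bounded window $\gamma\in[\gamma_0, M]$ (with $M$ to be chosen), monotonicity of $\Psi$ gives
\[
\frac{\Psi(s\gamma)}{\Psi(\gamma)} \;\leq\; \frac{\Psi(sM)}{\Psi(\gamma_0)},
\]
which tends to $0$ as $s\downarrow 0$ since $\Psi(0+)=0$ and $\Psi(\gamma_0)>0$. For $\gamma\geq M$ with $s\gamma\geq u_0$, the Potter bound above yields $\Psi(s\gamma)/\Psi(\gamma)\leq A s^{\beta-\delta}$, vanishing as $s\downarrow 0$. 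For $\gamma\geq M$ with $s\gamma<u_0$, monotonicity gives
\[
\frac{\Psi(s\gamma)}{\Psi(\gamma)} \;\leq\; \frac{\Psi(u_0)}{\Psi(M)},
\]
which is independent of both $s$ and $\gamma$; by the unboundedness of $\Psi$ at infinity, one can pre-select $M\geq u_0\vee\gamma_0$ large enough that $\Psi(u_0)/\Psi(M)<\epsilon/3$. Fixing this $M$, the other two regions each contribute at most $\epsilon/3$ for $s$ sufficiently small, yielding $\sup_{\gamma\geq\gamma_0}\Psi(s\gamma)/\Psi(\gamma)<\epsilon$, which is exactly Assumption~\ref{HT4.1}.

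The main subtlety is that the Potter bound is only effective when \emph{both} arguments of $\Psi$ are above a fixed threshold $u_0$. The Karamata uniform convergence theorem ($\Psi(s\gamma)/\Psi(\gamma)\to s^\beta$ uniformly in $s$ on compacta) is not enough on its own, because one needs uniformity in $\gamma\in[\gamma_0,\infty)$ together with a limit in $s\downarrow 0$, and the awkward regime $s\gamma\ll 1\ll\gamma$ falls outside the range in which Potter applies. The three-region decomposition separates the competing scaling regimes cleanly, and the unboundedness hypothesis on $\Psi$ is precisely what is needed to dispose of the small-$s\gamma$, large-$\gamma$ regime via a ratio-in-$\gamma$ estimate rather than a scaling-in-$s$ estimate.
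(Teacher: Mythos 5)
Your proof is correct, and it takes a somewhat different route from the paper's. The paper argues sequentially: it reduces the claim to showing $\Psi(s_n\gamma_n)/\Psi(\gamma_n)\to 0$ along any sequences with $s_n\to 0$, $\gamma_n\to\infty$, $s_n\gamma_n\to\infty$ (the other regimes — $\gamma_n$ bounded, or $s_n\gamma_n$ bounded while $\gamma_n\to\infty$ — are disposed of implicitly by monotonicity, $\Psi(0+)=0$ and unboundedness, exactly the content of your regions $[\gamma_0,M]$ and $\{\gamma\geq M,\ s\gamma<u_0\}$), and then in the main regime it simply bounds $\Psi(s_n\gamma_n)\leq\Psi(\varepsilon\gamma_n)$ for large $n$ and uses the bare definition of regular variation at the fixed ratio $\varepsilon$, i.e.\ $\ell(\varepsilon\gamma_n)/\ell(\gamma_n)\to 1$, to get $\limsup\lesssim\varepsilon^\beta$. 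You instead give a direct, non-sequential estimate uniform over $\gamma\in[\gamma_0,\infty)$, with the main regime handled by Potter's bounds. What your version buys is an explicit quantitative modulus ($A s^{\beta-\delta}$ together with $\Psi(sM)/\Psi(\gamma_0)$ and $\Psi(u_0)/\Psi(M)$) and a fully spelled-out treatment of the reduction that the paper leaves to the reader; what the paper's version buys is brevity and the avoidance of Potter's theorem, needing only monotonicity and the pointwise definition of slow variation. One small point in your write-up: since $\Psi$ is only assumed comparable to $u^\beta\ell(u)$ for large $u$, the Potter constant $A$ in your displayed inequality should absorb the two comparability constants (and $u_0$ should also exceed the threshold where the comparison holds); you flag this with ``up to bounded multiplicative constants,'' and it affects nothing in the argument.
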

\begin{proof}
It suffices to show that for any sequence $(s_n, \gamma_n)$ with
$$
s_n\to 0, \quad \gamma_n\to \infty, \quad \text{and}\, \; s_n\gamma_n\to \infty,
$$
we have
\begin{equation}
\label{goto}
\lim_{n\to\infty} \frac{\Psi(s_n\gamma_n)}{\Psi(\gamma_n)}= 0.
\end{equation}
Fix any $\varepsilon>0$. Then for large $n$,
$$
\frac{\Psi(s_n\gamma_n)}{\Psi(\gamma_n)}\leq \frac{\Psi(\varepsilon\gamma_n)}{\Psi(\gamma_n)}\asymp
\varepsilon^\beta \, \frac{\ell(\varepsilon\gamma_n)}{\ell(\gamma_n)}\asymp \varepsilon^\beta.
$$
Hence \eqref{goto} follows.
\end{proof}

\begin{example}
By Lemma~\ref{L3.5} the following Bernstein functions satisfy Assumption \ref{HT4.1}:
\begin{itemize}
\item[(i)] $\Psi(u)=u^{\alpha/2}, \, \alpha\in(0, 2]$.
\item[(ii)] $\Psi(u)=(u+m^{2/\alpha})^{\alpha/2}-m, \, m> 0, \alpha\in (0, 2)$.
\item[(iii)]$\Psi(u)=u^{\alpha/2} + u^{\beta/2}, \, 0 < \beta < \alpha \in(0, 2]$.
\item[(iv)] $\Psi(u)=u^{\alpha/2}(\log(1+u))^{\beta/2}$, $\alpha \in (0,2)$, $\beta \in (0, 2-\alpha)$.
\item[(v)] $\Psi(u)=u^{\alpha/2}(\log(1+u))^{-\beta/2}$, $\alpha \in (0,2]$, $\beta \in [0,\alpha)$.
\end{itemize}
\end{example}

\begin{example}
On the other hand,  $\Psi(u)=\log(1+u^{\alpha/2})$, $\alpha\in (0,2]$, does not satisfy Assumption
\ref{HT4.1}. To see this note that for $s=\frac{1}{n}$ and $\gamma=n^2$ we have
$$
\lim_{n\to\infty} \frac{\Psi(s\gamma)}{\Psi(\gamma)} = \lim_{n\to\infty}
\frac{\log(1+ n^{\alpha/2})}{\log(1+n^\alpha)}\geq \frac{1}{2}\,.
$$
\end{example}

In the remaining part of this section we consider the eigenvalue problem in full space.
\begin{theorem}\label{T4.3}
Consider the operator $H$ given by \eqref{nonlocSch}, $\supp V = \cK$, and let $\varphi$ be a solution of the
Schr\"odinger eigenvalue problem \eqref{fullev} for $H$, corresponding to eigenvalue $\lambda = - |\lambda|<0$.
If $|\varphi|$ has a global maximum at $x^* \in \Rd$, then $x^* \in \cK$.
\end{theorem}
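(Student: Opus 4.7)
My plan is to argue by contradiction along the same lines as Theorem~\ref{T4.1} and Remark~\ref{R4.1}, but with the compact support of $V$ playing the crucial role that made the analogous argument work in the bounded-domain setting. Suppose $x^{*}\notin\cK$, and (after replacing $\varphi$ by $-\varphi$ if necessary) assume $\varphi(x^{*})=|\varphi(x^{*})|>0$. Introduce the hitting time
$$
\sigma = \inf\{s>0 : X_{s}\in\cK\},
$$
which is a stopping time with respect to the natural filtration of the subordinate Brownian motion $\pro X$. Because $\cK=\supp V$ is closed, $\cK^{c}$ is an open neighbourhood of $x^{*}$, and the c\`adl\`ag property of $\pro X$ then forces $\sigma>0$ almost surely under $\Prob^{x^{*}}$: indeed $\lim_{s\downarrow 0}X_{s}=x^{*}$ means that $X_{s}$ stays in a fixed neighbourhood of $x^{*}$ contained in $\cK^{c}$ for a random positive time.

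Next, I would invoke the Feynman-Kac representation recalled at the beginning of Section~4 in its stopped version. Since $V$ is bounded and $\varphi$ is bounded and continuous (as noted in the excerpt), the process $M_{t}=e^{-\int_{0}^{t}(V(X_{s})-\lambda)\,\D s}\varphi(X_{t})$ is a bounded martingale under $\Prob^{x^{*}}$, so optional stopping applied to the bounded stopping time $t\wedge\sigma$ yields
$$
\varphi(x^{*}) \;=\; \ex^{x^{*}}\!\left[e^{-\int_{0}^{t\wedge\sigma}(V(X_{s})-\lambda)\,\D s}\,\varphi(X_{t\wedge\sigma})\right], \qquad t>0.
$$
On the event $\{s<\sigma\}$ the path satisfies $X_{s}\in\cK^{c}$, hence $V(X_{s})=0$, so the Lebesgue-integral $\int_{0}^{t\wedge\sigma}V(X_{s})\,\D s$ vanishes almost surely. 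Using $\lambda=-|\lambda|<0$, the above identity collapses to
$$
\varphi(x^{*}) \;=\; \ex^{x^{*}}\!\left[e^{-|\lambda|(t\wedge\sigma)}\,\varphi(X_{t\wedge\sigma})\right].
$$

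To reach a contradiction, I would use that $x^{*}$ is a global maximum of $|\varphi|$, so $|\varphi(X_{t\wedge\sigma})|\leq \varphi(x^{*})$ pointwise, whence
$$
\varphi(x^{*}) \;\leq\; \varphi(x^{*})\,\ex^{x^{*}}\!\left[e^{-|\lambda|(t\wedge\sigma)}\right].
$$
Since $\sigma>0$ $\Prob^{x^{*}}$-almost surely and $t>0$, the random variable $t\wedge\sigma$ is strictly positive on a set of full measure, so $e^{-|\lambda|(t\wedge\sigma)}<1$ almost surely, giving $\ex^{x^{*}}[e^{-|\lambda|(t\wedge\sigma)}]<1$. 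This contradicts the displayed inequality, so $x^{*}\in\cK$. The only subtlety in the argument, and what I see as the main point to verify carefully, is the claim $\sigma>0$ almost surely; this rests on the combination of $\cK$ being closed (because it is a support) and the c\`adl\`ag regularity of subordinate Brownian motion at time $0$. The other technical ingredients, boundedness of $\varphi$ and the validity of optional stopping for bounded $\tau$, are immediate from the setup described earlier in Section~4.
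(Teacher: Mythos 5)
Your proof is correct and uses essentially the same idea as the paper: a contradiction via the Feynman--Kac representation stopped before the path reaches $\supp V$, so the potential term vanishes and the factor $e^{\lambda(t\wedge\cdot)}<1$ together with the maximality of $|\varphi(x^*)|$ forces $\varphi(x^*)<\varphi(x^*)$. The only (cosmetic) difference is that the paper stops at the exit time $\uptau_\delta$ of a small ball $\sB_\delta(x^*)\subset\cK^c$ and uses $\Prob^{x^*}(\uptau_\delta>t)>0$, whereas you stop at the hitting time of $\cK$ and use its almost sure strict positivity.
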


\begin{proof}
We show that there is no maximizer in $\cK^c$. Assume, to the contrary, that $x^*\in \cK$. Therefore, for a suitable
$\delta>0$ we have $\sB_\delta(x^*)\in \cK^c$. Let $\uptau_\delta$ be the exit time from the ball $\sB_\delta(x^*)$.
Since $\ex^{x^*}[\uptau_\delta]>0$, we find $t>0$ such that $\Prob^{x^*}(\uptau_\delta> t)>0$. As before, we can also
assume that $\varphi(x^*)>0$. By the Feynman-Kac representation we have
\begin{align*}
\varphi(x^*)
&=
\ex^{x^*}\left[e^{\lambda(t\wedge\uptau_\delta)}\varphi(X_{t\wedge\uptau_\delta})\right]
\\
&\leq
e^{\lambda t}\ex^{x^*}\left[\varphi(X_{t})\Ind_{\{\uptau_\delta>t \}}\right] +
\left[e^{\lambda\uptau_\delta}\varphi(X_{t})\Ind_{\{\uptau_\delta \leq t \}}\right]
\\
&\leq e^{\lambda t} \varphi(x^*) \Prob^{x^*}(\uptau_\delta>t) + \varphi(x^*) \Prob^{x^*}(\uptau_\delta\leq t).
\end{align*}
This would imply $e^{\lambda t}>1$, which is a contradiction as $\lambda<0$. Hence $x^*\in\cK$.
\end{proof}

\begin{remark}
Recall that the eigenfunctions are continuous, as mentioned earlier. Since $V$ is bounded, from the Feynman-Kac
representation we have for every $t>0$ that
\begin{equation}\label{ER4.2A}
\abs{\varphi(x)}\;\leq\; e^{(\norm{V}_\infty -\lambda)t} \Exp^x[|\varphi|^2(X_t)]^{\nicefrac{1}{2}}
\leq e^{(\norm{V}_\infty -\lambda_0)t} \Bigl(\int_{\Rd} |\varphi(y)|^2 q_t(x-y)\, \D{y}\Bigr)^{\nicefrac{1}{2}}\,,
\end{equation}
where $q_t(x, y) = q_t(x-y)$ denotes the transition density of $\pro X$ starting at $X_0=x$. It follows by
subordination, see \eqref{subord}, that
$$
q_t(x-y) = \int_0^\infty \frac{1}{(4\pi s)^{\nicefrac{d}{2}}} e^{-\frac{|x-y|^2}{4s}} \Prob_{S}(S^\Psi_t\in \D{s}).
$$
Therefore, for every fixed $y$ we have $q_t(x-y)\to 0$ as $\abs{x}\to\infty$. Moreover, if $\Psi$ satisfies the
Hartman-Wintner condition \eqref{HW}, then $q_t(x, y)$ is bounded and continuous. Hence by dominated convergence
we obtain from \eqref{ER4.2A} that $\lim_{\abs{x}\to \infty} \abs{\varphi(x)}=0$, thus every eigenfunction attains
its maximum in $\Rd$.
\end{remark}

Finally, we show how deep inside the support the maximizer can be for a potential well. We denote by $\Int \cK$ the
interior of $\cK$.
\begin{theorem}\label{T3.3}
Let $V=-v\Ind_\cK$ with a bounded convex set $\cK$, and $\varphi$ be an eigenfunction corresponding to eigenvalue $\lambda
= -|\lambda|<0$ solving the eigenvalue problem \eqref{fullev}. Suppose that $\Psi$ is unbounded and satisfies Assumption
\ref{HT4.1}. Then there exist two constants $\varrho_1, \varrho_2 > 0$, dependent only on $\Psi$ and $\inrad \cK$, such
that if
\begin{equation}\label{ET3.3A}
\frac{v-\abs{\lambda}}{\abs{\lambda}}\;\leq\;\varrho_1\,,
\end{equation}
then $x^*\in \Int \cK$ and
\begin{equation}\label{ET3.3B}
\dist(x^*, \partial\cK)\geq \frac{1}{\sqrt{\Psi^{-1}\left(\frac{\abs{\lambda}}{\varrho_2}\right)}}\, .
\end{equation}
\end{theorem}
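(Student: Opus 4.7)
The plan is to couple the Feynman--Kac representation with an adaptation of the tube construction from the proof of Theorem~\ref{T3.2}, and then to exploit the smallness of $\varrho_1$ to close the argument by a splitting estimate. With $V=-v\Ind_\cK$ and $\lambda=-|\lambda|$, the Feynman--Kac identity reads
\[
\varphi(x^*) \;=\; e^{-|\lambda|t}\,\ex^{x^*}\!\bigl[e^{vU_t^\cK(X)}\varphi(X_t)\bigr], \qquad t>0,
\]
with $U_t^\cK(X)=\int_0^t\Ind_\cK(X_s)\,\D{s}$. Assuming (without loss of generality) $\varphi(x^*)>0$ and using $|\varphi(X_t)|\le\varphi(x^*)$, this reduces to $1\le e^{-|\lambda|t}\ex^{x^*}\!\bigl[e^{vU_t^\cK(X)}\bigr]$. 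I would suppose, for contradiction, that $r:=\dist(x^*,\partial\cK)<1/\sqrt{\Psi^{-1}(|\lambda|/\varrho_2)}$ for $\varrho_2>0$ to be fixed below, so that $\Psi(r^{-2})>|\lambda|/\varrho_2$.

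The central step is to build an event $E$ on which the occupation time $U_t^\cK(X)$ is small. Convexity of $\cK$ allows one to choose the supporting hyperplane of $\cK$ at the closest boundary point $z_0\in\partial\cK$ to $x^*$, normalise $x^*=0$ and $z_0=re_1$, and consider the deterministic curve $\chi(s)=2\sqrt{s}\,e_1$. By the Stroock--Varadhan support theorem (exactly as in the proof of Theorem~\ref{T3.2}), the Brownian tube event $\{\sup_{s\in[0,r^2T_\circ]}|B_s-\chi(s)|<r\delta\}$ has probability $\ge\delta_1>0$ uniformly in $r$, for fixed $\delta\in(0,1/2)$ and $T_\circ>1$. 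Using Assumption~\ref{HT4.1} through the uniform Tauberian argument of Theorem~\ref{T3.2}, for $t=c/\Psi(r^{-2})$ one can choose $c$ and $T_\circ$ so that $\Prob_S(S^\Psi_t\in[r^2,r^2T_\circ])\ge1/4$ uniformly in $r\in(0,\inrad\cK/2]$; while for $s_*=c'/\Psi(r^{-2})$ the Laplace-transform computation of the proof of Theorem~\ref{T3.1} yields $\Prob_S(\tau\le s_*)\ge1/2$, where $\tau=\inf\{s:S^\Psi_s\ge r^2\}$. On the intersection $E$ of the tube event with these two subordinator events, the inequality $|X_s-\chi(S^\Psi_s)|<r\delta$ together with $S^\Psi_s\ge r^2$ places $X_s$ in the supporting half-space, hence in $\cK^c$, for all $s\ge\tau$; consequently $U_t^\cK(X)\le s_*$ on $E$, and independence of $\pro B$ and $\pro{S^\Psi}$ gives $\Prob^{x^*}(E)\ge\delta_1'$ for a constant $\delta_1'>0$ independent of $r$.

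Splitting $\ex^{x^*}\!\bigl[e^{vU_t^\cK}\bigr]$ on $E$ and $E^c$, using the trivial bound $U_t^\cK\le t$ outside $E$, one obtains
\[
1 \;\le\; e^{vs_*-|\lambda|t}\,\Prob^{x^*}(E) + e^{(v-|\lambda|)t}\,\Prob^{x^*}(E^c).
\]
Setting $t=m_0/|\lambda|$ and picking $\varrho_2$ small enough that $vs_*\le|\lambda|t/2$ (which is possible because $v\le(1+\varrho_1)|\lambda|$ and $s_*<c'\varrho_2/|\lambda|$), the coefficients satisfy $A:=e^{vs_*-|\lambda|t}\le e^{-m_0/2}$ and $B:=e^{(v-|\lambda|)t}\le e^{m_0\varrho_1}$. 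Since the map $p\mapsto Ap+B(1-p)$ is decreasing when $A<B$, the bound $\Prob^{x^*}(E)\ge\delta_1'$ gives $1\le A\delta_1'+B(1-\delta_1')$. Choosing first $m_0$ so large that $A\delta_1'<\delta_1'/2$, and then $\varrho_1$ small enough that $B(1-\delta_1')<1-\delta_1'/2$, yields the desired contradiction and simultaneously forces $x^*\in\Int\cK$ and \eqref{ET3.3B}. The main technical obstacle is the construction of $E$: one must secure a uniform-in-$r$ lower bound on the joint tube and subordinator-concentration events, and it is precisely Assumption~\ref{HT4.1} that provides the uniform control needed for $\Prob_S(S^\Psi_t\le r^2T_\circ)$.
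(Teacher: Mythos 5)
Your construction of the event $E$ is essentially the paper's own (supporting hyperplane at the nearest boundary point, Brownian tube via the Stroock--Varadhan support theorem and scaling, subordinator concentration $S^\Psi_t\in[r^2,r^2T_\circ]$ obtained from Assumption \ref{HT4.1} through the uniform Tauberian theorem), with the mild variant that you bound the occupation time by a first-passage time $\tau\le s_*$ instead of doubling the time window as the paper does. The genuine gap is in the closing step, where you use two incompatible choices of $t$. The event $E$ pins $t=c/\Psi(r^{-2})$: the bound $U_t^\cK\le s_*$ holds only up to that time, and the uniform-in-$r$ lower bound on $\Prob^{x^*}(E)$ requires $t\,\Psi(r^{-2})$ to be a fixed constant; if instead you build $E$ at $t=m_0/\abs{\lambda}$, then over the regime $\Psi(r^{-2})>\abs{\lambda}/\varrho_2$ the product $t\,\Psi(r^{-2})$ is unbounded above, so no single $T_\circ$ keeps $\Prob_S(S^\Psi_t\le r^2T_\circ)$ bounded away from zero, and the occupation-time control on the stretch beyond $c/\Psi(r^{-2})$ is lost. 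With the forced choice $t=c/\Psi(r^{-2})$, your own contradiction hypothesis makes $\abs{\lambda}t<c\varrho_2$ \emph{small}, so $A=e^{vs_*-\abs{\lambda}t}$ is close to $1$ rather than exponentially small, and the step ``choose $m_0$ so large that $A\delta_1'<\delta_1'/2$'' cannot be executed: $m_0=\abs{\lambda}t$ is not a free parameter.

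The correct closing, which your setup does support, goes the other way. From $1\le\delta_1'e^{vs_*-\abs{\lambda}t}+(1-\delta_1')e^{\varrho_1\abs{\lambda}t}$ with $vs_*\le\tfrac12\abs{\lambda}t$ (take $c'$ small relative to $c$, using $v\le(1+\varrho_1)\abs{\lambda}$), you get $1\le\xi(\abs{\lambda}t)$ with $\xi(y)=\delta_1'e^{-y/2}+(1-\delta_1')e^{\varrho_1 y}$. Since $\xi(0)=1$ and $\xi'(0)<0$ once $\varrho_1<\frac{\delta_1'}{2(1-\delta_1')}$, we have $\xi<1$ on some $(0,\varepsilon_0)$, so the inequality forces $\abs{\lambda}t\ge\varepsilon_0$, i.e.\ $\Psi(r^{-2})\le c\abs{\lambda}/\varepsilon_0$, which is \eqref{ET3.3B} with $\varrho_2=\varepsilon_0/c$; this is precisely the paper's $\xi$-argument, and it is where the smallness of $\varrho_1$ actually enters. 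Separately, you assert but do not prove $x^*\in\Int\cK$: one needs first $x^*\in\cK$ (Theorem \ref{T4.3}) and then a boundary-exclusion step (the paper's Step 2: if $x^*\in\partial\cK$, the same construction applies for every small $r>0$, giving $1\le\xi(\abs{\lambda}t)$ with $t\to0$ because $\Psi$ is unbounded, a contradiction); without this, $r>0$ is unjustified and \eqref{ET3.3B} is vacuous.
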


\begin{proof}
\emph{Step 1:} First we prove \eqref{ET3.3B} assuming that $x^*\in \Int \cK$. By a shift we can assume that $x^*=0$ with
no loss of generality, and we denote $r=\dist(x^*, \partial\cK)>0$. Let $t=\frac{c}{\Psi(r^{-2})}$ where the constant $c$
will be chosen later. From the proof of Theorem~\ref{T3.2} we see that we can choose $c$ large enough such that
\begin{equation}\label{ET3.3C}
\Prob_{S}(S^\Psi_{t}\in [r^2, r^2 T_\circ])=\delta_1>\frac{1}{4}, \quad \forall\; r\in(0, \inrad \cK]\,;
\end{equation}
see \eqref{ET4.1E} above. Therefore, by the independence of increments we have from \eqref{ET3.3C} that
\begin{equation}\label{ET3.3D}
\Prob_{S}(S^\Psi_{t}\in [r^2, r^2 T_\circ], \, S^\Psi_{2t}-S^\Psi_t\in [r^2, r^2 T_\circ])=\delta_1^2,
\quad \forall\; 0<r\leq\inrad \cK\,.
\end{equation}
Now we fix the above choice of $T_\circ$ which depends on $c$ and $\Psi$ and $\inrad \cK$ (recall that $r$ and $t$ are related).
Since $\cK$ is convex, we may assume that the point $z_0 =
(r, 0, \ldots, 0)\in\partial\cK$ is such that $\dist(0, z_0)=r$, $\cK$ lies on the on the complement of the half-space
$\{y\in\Rd\;:\; z_0\cdot y \geq r^2\} $. Define $\chi:[0, T_\circ]\to \Rd$ by
$$\chi(s)=2 s,$$
Note that $\dist(0, \chi(\frac{1}{2}r))=r$. Define
$$
\cN= \left\{f\in \cC([0, 2T_\circ], \RR)\; :\; f(0)=0 \; \text{and} \; \max_{s\in[0, 2T_\circ]}\abs{f(s)-\chi(s)}<\frac{1}{2}\right\}.
$$
In a similar manner as in the proof of Theorem \ref{T3.2} we find that there is a $\delta_2 > 0$ such that
\begin{equation}\label{ET3.3E}
\Prob^0_W\left(\frac{1}{r}B_{r^2\cdot} \in \cN\right) = \Prob^0_W(B^1_\cdot \in \cN)=\delta_2.
\end{equation}
Also, we have
\begin{align*}
&\left\{\max_{s\in[0, 2T_\circ]} \Big|\frac{1}{r}B^1_{r^2s} -\chi(s)\Big|<\frac{1}{2}\right\}
= \left\{\max_{s\in[0, 2 r^2 T_\circ]} \Big|B^1_{s} - \frac{1}{r}\chi(s)\Big|< \frac{r}{2} \right\},
\end{align*}
using scaling and that $r\chi(s)=\frac{1}{r}\chi(r^2s)$. Thus we obtain
\begin{equation}\label{ET3.3G}
\Prob^0_W\left(\max_{s\in[0, 2 r^2 T_\circ]} \abs{B^1_{s} - \frac{1}{r}\chi(s)}< \frac{r}{2}\right)=\delta_2\,.
\end{equation}
Combining \eqref{ET3.3D} and \eqref{ET3.3G} gives
\begin{align}\label{ET3.3H}
&\Prob^{0}\left((\omega, \varpi) : \max_{s\in[0, 2T_\circ r^2]}
\abs{B^1_{s}(\omega) - \frac{1}{r}\chi(s)}< r/2, \, S^\Psi_t(\varpi)\in [r^2, r^2 T_\circ],
S^\Psi_{2t}(\varpi)-S^\Psi_t(\varpi)\in [r^2, r^2 T_\circ]\right)\nonumber \\
&=
\Prob^0_W\left(\max_{s\in[0, 2T_\circ r^2]} \abs{B^1_{s} - \chi(s)}< r/2\right)\,
\Prob_S(S^\Psi_t\in [r^2, r^2 T_\circ], \, S^\Psi_{2t}-S^\Psi_t\in [r^2,r^2 T_\circ]) =\delta_2\, \delta_1^2,
\end{align}
where the third line follows from the independence of the two processes. Let
$$
\widehat\Omega = \left\{(\omega, \varpi) : \; \max_{s\in[0, 2T_\circ r^2]} \Big|B^1_{s}(\omega) - \frac{1}{r}\chi(s)\Big|
< \frac{r}{2}, \;\;  S^\Psi_t(\varpi)\in [r^2, r^2 T_\circ],  \;\; S^\Psi_{2t}(\varpi)-S^\Psi_t(\varpi)\in [r^2, r^2 T_\circ]\right \}\,.
$$
We see that
$$
\widehat\Omega\subset\left\{(\omega, \varpi) : \;
\sup_{s\in[0, t]}\Big|B^1_{S^\Psi_s}-\frac{1}{r}\chi(S^\Psi_s) \Big|< \frac{r}{2}, \; \;  S^\Psi_t\in [r^2, r^2 T_\circ], \;\;
S^\Psi_{2t}-S^\Psi_t\in [r^2, r^2 T_\circ]\right\}\,.
$$
By the construction of $\chi$ it follows that for every $(\omega, \varpi)\in\widehat\Omega$, $B_{S^\Psi_t}\in \cK^c$ and the paths
of $B_{S^\Psi_s}$ stay in $\cK^c$ for all $s\in [t, 2t]$. This observation will play a key role in our analysis below.

Let $\delta=\delta_2\,\delta_1^2$, and define
$$
2\varrho_1=\frac{\delta}{2-\delta}\in (0, 1),
$$
and a function $\xi:\RR\to\RR^+$ by
$$
\xi(y)=\delta e^{-\frac{1}{2}(1-\varrho_1) y} + (1-\delta)e^{\varrho_1 y}.
$$
It is direct to see that $\xi'(\varepsilon_0)=0$ gives
$$
\varepsilon_0=\frac{2}{1+\varrho_1}\log\frac{\delta(1-\varrho_1)}{2\varrho_1(1-\delta)}.
$$
Since
$$
\varrho_1 <\frac{\delta}{2-\delta} \quad \mbox{implies} \quad \frac{\delta(1-\varrho_1)}{2\varrho_1(1-\delta)}>1,
$$
we have $\varepsilon_0>0$.  Again observe that $\xi'(0)<0$, and therefore $\xi(y)<1$ for $y\in (0, \varepsilon_0)$.

Suppose now that $\frac{v-\abs{\lambda}}{\abs{\lambda}}\leq\varrho_1$. By the Feyman-Kac representation we have
$$
\varphi(0)=\ex^0\left[e^{\int_0^{2t} (\lambda-V(X_s))\, \D{s}}\varphi(X_{2t})\right]\,,
$$
which, in turn, implies
\begin{align}\label{ET3.3I}
1 &\leq
\ex^0\left[e^{\int_0^{2t} (\lambda-V(X_s))\, \D{s}}\right]\nonumber
\\
&= \ex^0\left[e^{\int_0^{2t} (\lambda-V(X_s))\, \D{s}}\Ind_{\widehat\Omega}\right] +
\ex^0\left[e^{\int_0^{2t} (\lambda-V(X_s))\, \D{s}}\Ind_{\widehat\Omega^c}\right]\nonumber
\\
&\leq
\ex^0\left[e^{\int_0^{2t} (\lambda-V(X_s))\, \D{s} + \lambda t } \Ind_{\widehat\Omega}\right]
+ (1-\delta) e^{(v+\lambda)2t}\nonumber
\\
& \leq \delta e^{(v+\lambda) t - \abs{\lambda} t} + (1-\delta) e^{(v+\lambda)2t}
\leq \delta e^{\varrho_1\abs{\lambda}t - \abs{\lambda} t} + (1-\delta) e^{\varrho_1\abs{\lambda}2t}
=\xi(2t\abs{\lambda})\,,
\end{align}
where in the fourth line we used \eqref{ET3.3H}. Since $2t\abs{\lambda}>0$ and $\xi(2t\abs{\lambda})\geq 1$,
we conclude that
$$
2t\abs{\lambda}\geq \varepsilon_0\,
$$
holds. Hence \eqref{ET3.3B} follows with $\varrho_2=\frac{\varepsilon_0}{2c}$.

\medskip
\noindent
\emph{Step 2:} To conclude, we prove that under the condition \eqref{ET3.3A} we have $x^*\notin\partial\cK$.
Like before, we may assume that $x^*=0$ and  $\cK\subset\{x_1\leq 0\}$. Note that the estimate \eqref{ET3.3H}
holds uniformly in $r\in(0, \inrad \cK)$. Since $0$ is on the boundary of $\cK$ and the function $\chi$, defined
above, lies in $\{x_1\geq 0\}$, we observe that for every $r>0$ and every $(\omega, \varpi)\in \widehat\Omega=
\widehat\Omega_r$ we have $B_{S^\Psi_t}\in \cK^c$ and the paths $B_{S^\Psi_s}$ in stay $\cK^c$ for $s\in [t, 2t]$,
where $t=\frac{c}{\Psi(r^{-2})}$ and $c$ is chosen the same as before. Therefore, following a similar argument as
in the proof of \eqref{ET3.3I}, we obtain
$$
1 \leq \xi(2t\abs{\lambda}),
$$
for all $r>0$. Since $t\to 0$ as $r\to 0$, and since $\Psi$ is unbounded, the above estimate cannot hold for
small $t$. Thus we have a contradiction showing that $0=x^*\in \Int\cK$.
\end{proof}

\begin{remark}
We note that for a potential well $V=-v\Ind_\cK$, $v>0$, we have
$$
v-\abs{\lambda}\leq \lambda_1^\cK,
$$
where $\lambda_1^\cK$ is the principal eigenvalue of $\Psidel$ in $\cK$ with Dirichlet exterior condition on
$\cK^c$. Indeed, from the Feynman-Kac formula we get that
\begin{align*}
\varphi(x) &\geq \Exp^x\left[e^{\int_0^t(v\Ind_{\cK}(X_s)+\lambda)\, \D{s}} \varphi(X_t)\Ind_{\{t<\uptau_\cK\}}\right]
\\
&\geq e^{t (v-\abs{\lambda})}\, \min_{y\in\cK} \varphi(y)\, \Prob^x(t<\uptau_\cK)\,, \quad x\in\cK.
\end{align*}
By taking logarithms on both sides and dividing by $t > 0$, we get
\begin{equation*}
v-\abs{\lambda} \leq -\limsup_{t\to\infty}\, \frac{1}{t}\log\Prob^x(t<\uptau_\cK)\leq \lambda_1^\cK\,.
\end{equation*}
Thus the numerator at the left hand side of \eqref{ET3.3A} is always bounded by $\lambda_1^\cK$, and so for
$\abs{\lambda}$ large enough \eqref{ET3.3A} holds.
Also, notice that the result in Theorem~\ref{T3.3} continues to hold for more general potentials $V$ supported on $\cK$
and $\lambda<0$. In this situation \eqref{ET3.3A} will be replaced by
$$
\frac{-\min_{x\in\cK} V(x)-\abs{\lambda}}{\abs{\lambda}}\;\leq\;\varrho_1\,.
$$
\end{remark}

\bigskip
Notice that the dependence of $\varrho_1$ and $\varrho_2$ on $\inrad \cK$ comes from \eqref{HT4.1}, which has
been crucially used in \eqref{ET4.1D}. This dependence can be waived for a class of $\Psi$ for which \eqref{HT4.1}
holds uniformly in $\gamma_0>0$, i.e., when
 \begin{equation}\label{E4.20}
\lim_{s\to 0} \, \sup_{\gamma\in(0, \infty)}\, \frac{\Psi(s\gamma)}{\Psi(\gamma)}=0\,.
\end{equation}
Observe that if $\Psi$ satisfies Assumption \ref{WLSC}, then \eqref{E4.20} holds. Indeed, we have then
$$
\lim_{s\to 0} \, \sup_{\gamma\in(0, \infty)}\, \frac{\Psi(s\gamma)}{\Psi(\gamma)}
= \lim_{s\to 0} \, \sup_{\gamma\in(0, \infty)}\, \frac{\Psi(s\gamma)}{\Psi(s^{-1}s\gamma)}\lesssim
\lim_{s\to 0} s^\mu =0.
$$
Moreover, \eqref{ET4.1D}-\eqref{ET4.1E} follow then uniformly in $r\in(0, \infty)$. Therefore, in this case
$\varrho_1$ and $\varrho_2$ only depend on $\Psi$ and not on $\inrad \cK$. This is recorded in the following result.

\begin{theorem}\label{T3.4}
Suppose that $\Psi$ satisfies Assumption \ref{WLSC}, and let $\varphi$ and $\lambda = -|\lambda|$ solve the eigenvalue
equation \eqref{fullev} for $H$ with a potential well $V = -v \Ind_\cK$. Then there exist positive $\varrho_1, \varrho_2$,
dependent only on $\Psi$, such that if
\begin{equation*}
\frac{v-\abs{\lambda}}{\abs{\lambda}}\;\leq\;\varrho_1\,,
\end{equation*}
then $x^*\in\Int\cK$ and
\begin{equation*}
\dist(x^*, \partial\cK)\geq \frac{1}{\sqrt{\Psi^{-1}\left(\frac{\abs{\lambda}}{\varrho_2}\right)}}\, .
\end{equation*}
\end{theorem}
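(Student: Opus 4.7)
The plan is to repeat the proof of Theorem~\ref{T3.3} verbatim, while verifying that under Assumption~\ref{WLSC} every probability lower bound appearing there can be made uniform in $r\in(0,\infty)$, rather than only in $r\in(0,\inrad \cK]$.

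The first step is to derive from Assumption~\ref{WLSC} the uniform decay
$$
\lim_{s\to 0}\sup_{\gamma\in(0,\infty)}\frac{\Psi(s\gamma)}{\Psi(\gamma)}=0,
$$
which is the computation sketched just before the theorem statement: applying WLSC with ratio $1/s\geq 1$ to the point $s\gamma$ yields $\Psi(\gamma)\geq \underline{c}\, s^{-\mu}\Psi(s\gamma)$, so $\Psi(s\gamma)/\Psi(\gamma)\leq s^{\mu}/\underline{c}$ independently of $\gamma>0$.

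The second step is to feed this uniform control into the subordinator concentration argument from the proof of Theorem~\ref{T3.2}. Setting $t=c/\Psi(r^{-2})$ with $c$ chosen (uniformly in $r$, as in Theorem~\ref{T3.1}) so that $\Prob_S(S^\Psi_t<r^2)<1/4$ for every $r>0$, the uniform Tauberian theorem now produces a constant $T_\circ>1$ depending only on $c$ and $\Psi$ — \emph{not} on $\inrad\cK$ — such that
$$
\Prob_S\bigl(S^\Psi_t\in[r^2,r^2 T_\circ]\bigr)>\tfrac{1}{4}\quad\text{for all }r\in(0,\infty).
$$

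The third step is to run the argument of Theorem~\ref{T3.3} with this uniform $T_\circ$. The Brownian ingredients — the explicit curve $\chi(s)=2se_1$, its neighbourhood in $\cC([0,2T_\circ],\RR)$, and the Stroock--Varadhan support probability $\delta_2>0$ — are scale-invariant and hence independent of $r$. Combined with the subordinator estimate, this gives $\Prob^0(\widehat\Omega)\geq \delta_1^2\delta_2 =: \delta$ uniformly in $r>0$, where $\widehat\Omega$ is exactly the event defined in the proof of Theorem~\ref{T3.3}. The Feynman--Kac chain of inequalities \eqref{ET3.3I} then yields $\xi(2t|\lambda|)\geq 1$, and the constants
$$
\varrho_1=\frac{\delta}{2(2-\delta)},\qquad \varrho_2=\frac{1}{2c}\cdot\frac{2}{1+\varrho_1}\log\frac{\delta(1-\varrho_1)}{2\varrho_1(1-\delta)}
$$
depend only on $\Psi$. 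Step~2 of the proof of Theorem~\ref{T3.3} (ruling out $x^*\in\partial\cK$) transfers verbatim, since it used only the uniformity of the estimate on $\widehat\Omega$ as $r\downarrow 0$ together with $\Psi$ being unbounded — the latter being automatic from WLSC with $\mu>0$. The only real point requiring attention is this Tauberian step: the localisation to $\gamma\geq\gamma_0$ in Assumption~\ref{HT4.1} was precisely what forced the dependence on $\inrad\cK$ in Theorem~\ref{T3.3}, and Assumption~\ref{WLSC} is exactly what removes that restriction.
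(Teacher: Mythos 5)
Your proposal is correct and follows essentially the same route as the paper: it derives the uniform-in-$\gamma$ version of Assumption \ref{HT4.1} from the WLSC property via $\Psi(\gamma)\geq \underline{c}\,s^{-\mu}\Psi(s\gamma)$, observes that the subordinator estimates \eqref{ET4.1D}--\eqref{ET4.1E} (and hence the uniform Tauberian step) then hold uniformly in $r\in(0,\infty)$, and reruns the proof of Theorem \ref{T3.3} verbatim so that $\varrho_1,\varrho_2$ depend only on $\Psi$. Your explicit identification of the constants and the remark that unboundedness of $\Psi$ is automatic under WLSC are consistent with the paper's argument.
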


\medskip
Theorems \ref{T3.3}-\ref{T3.4} have the following interesting ``no-go" type consequence.
\begin{corollary}
\label{nogo}
Under the conditions of Theorems \ref{T3.3}-\ref{T3.4} we have that whenever
$$
v < \varrho_2 \Psi\left(\frac{1}{(\inrad \cK)^2}\right)\,,
$$
then either $\abs{\lambda} < v/(1+\varrho_1)$ or the non-local Schr\"odinger operator $H$ has no $L^2$-eigenfunctions.
\end{corollary}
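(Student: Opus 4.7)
The plan is to argue by contrapositive: assume both that $v < \varrho_2 \Psi\left(1/(\inrad \cK)^2\right)$ and that there exists an $L^2$-eigenfunction $\varphi$ of $H = \Psi(-\Delta) - v \Ind_\cK$ with eigenvalue $\lambda = -|\lambda|$ satisfying $|\lambda| \geq v/(1+\varrho_1)$, and derive a contradiction. The first step is to note that $|\lambda| \geq v/(1+\varrho_1)$ is equivalent to $(v - |\lambda|)/|\lambda| \leq \varrho_1$, which is precisely the hypothesis \eqref{ET3.3A} of Theorem~\ref{T3.3} (or of Theorem~\ref{T3.4} under Assumption~\ref{WLSC}).

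Thus I may invoke the applicable theorem to conclude that the global maximizer $x^*$ of $|\varphi|$ lies in $\Int \cK$ and satisfies
\begin{equation*}
\dist(x^*, \partial\cK) \;\geq\; \frac{1}{\sqrt{\Psi^{-1}\!\left(\tfrac{|\lambda|}{\varrho_2}\right)}}.
\end{equation*}
Since $x^* \in \Int \cK$ and the inradius is the largest distance from an interior point of $\cK$ to its boundary, one has the trivial bound $\dist(x^*, \partial \cK) \leq \inrad \cK$. Combining the two inequalities and using that $\Psi$ is strictly increasing on $(0,\infty)$ (hence so is $\Psi^{-1}$), we obtain
\begin{equation*}
\Psi^{-1}\!\left(\tfrac{|\lambda|}{\varrho_2}\right) \;\geq\; \frac{1}{(\inrad \cK)^2},
\qquad \text{i.e.,} \qquad |\lambda| \;\geq\; \varrho_2\, \Psi\!\left(\tfrac{1}{(\inrad \cK)^2}\right).
\end{equation*}

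Finally, since $H_0 = \Psi(-\Delta) \geq 0$ and $V \geq -v$, the operator $H$ is bounded below by $-v$, so any eigenvalue satisfies $\lambda \geq -v$, i.e., $|\lambda| \leq v$; more directly this was already recorded in Section~4 as $\Spec_{\rm d} H \subset (-v, 0)$. Chaining the inequalities gives
\begin{equation*}
v \;\geq\; |\lambda| \;\geq\; \varrho_2\, \Psi\!\left(\tfrac{1}{(\inrad \cK)^2}\right),
\end{equation*}
contradicting the standing assumption $v < \varrho_2 \Psi\!\left(1/(\inrad \cK)^2\right)$. This completes the argument. There is no real obstacle here: the corollary is essentially a repackaging of Theorems~\ref{T3.3}--\ref{T3.4} via the trivial geometric bound $\dist(x^*, \partial \cK) \leq \inrad \cK$ and the spectral localisation $|\lambda| \leq v$; the only thing to be careful about is invoking the correct theorem (Theorem~\ref{T3.3} in general, Theorem~\ref{T3.4} when Assumption~\ref{WLSC} holds), so that the constants $\varrho_1, \varrho_2$ match the hypotheses stated in the corollary.
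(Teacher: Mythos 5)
Your proposal is correct and follows essentially the same route as the paper: rewrite $|\lambda|\geq v/(1+\varrho_1)$ as condition \eqref{ET3.3A}, invoke Theorems \ref{T3.3}--\ref{T3.4}, and combine the resulting lower bound on $\dist(x^*,\partial\cK)$ with the trivial bound $\dist(x^*,\partial\cK)\leq\inrad\cK$, the fact $|\lambda|<v$, and the monotonicity of $\Psi^{-1}$ to reach the contradiction. The only difference is the order in which the inequalities are chained, which is immaterial.
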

\begin{proof}
We have trivially $\dist(x^*, \partial\cK) < \inrad\cK$. Also, $|\lambda| < v$, and $\Psi^{-1}$ is an
increasing function. Hence Theorems \ref{T3.3}-\ref{T3.4} give
$$
\inrad \cK \geq \frac{1}{\sqrt{\Psi^{-1}\left(\frac{v}{\varrho_2}\right)}}\,,
$$
implying the result.
\end{proof}

\begin{remark}
\hspace{100cm}
\begin{trivlist}
\item[\;\,(i)]
We note that, using direct techniques of differential equations, for usual Schr\"odinger operators $H =
-\Delta - v\Ind_{\sB_a}$ in $L^2(\Rd)$, it is well-known that for $d \geq 3$, the smallness of the quantity
$va^2$ implies that no $L^2$-eigenfunctions exist. Using the Birman-Schwinger principle, bounds on $va^\alpha$
can also be derived ruling out $L^2$-eigenfunctions of $H = (-\Delta)^{\alpha/2} - v\Ind_{\sB_a}$ and further
non-local operators \cite{L12}. Although the constants may in general differ, we have the same type of bounds
resulting from Corollary \ref{nogo} above.
\medskip
\item[\;\,(ii)]
We can also use Green functions to find a ``no-go" type consequence, which does not involve \eqref{ET3.3A}.
Suppose that $d\geq 3$ and the transition density  probability function of $\pro X$ decays to $0$ as $t\to\infty$.
Then the ground state $\varphi_1$ of $H = \Psi(-\Delta) - v\Ind_\cK$ has the representation
\begin{equation}\label{ER4.6A}
\varphi_1(x)=\int_{\Rd}(\lambda_1-V(y))\varphi(y)G(x, y)\, \D{y},
\end{equation}
where $G(\cdot, \cdot)$ is the associated Green function. It is known \cite[Th.~3]{Grzywyn-14} that there exists
a constant $C_d$, dependent only on $d$, such that
$$
G(x, y)\leq \frac{C_d}{\abs{x-y}^d\Psi(\abs{x-y}^{-2})}\,.
$$
Let $R=\diam \cK$. Since $x^*\in\cK$, by Theorem~\ref{T4.3}, and $\lambda_1<0$ we see from \eqref{ER4.6A} that
\begin{equation*}
\varphi_1(x^*) \leq C_d\, (v-\abs{\lambda_1})\, \varphi_1(x^*) \int_{\cK} \frac{\D{y}}{\abs{x^*-y}^d\Psi(\abs{x^*-y}^{-2})}\,,
\end{equation*}
which implies
\begin{align*}
1 \leq C_d\, (v-\abs{\lambda_1})\, \int_{\sB_R(x^*)} \frac{\D{y}}{\abs{x^*-y}^d\Psi(\abs{x^*-y}^{-2})}
= C_d\ d\,\omega_d\, v \int_0^{R} \frac{\D{s}}{s\Psi(s^{-2})},
\end{align*}
where $\omega_d$ denotes the volume of the unit ball in $\Rd$. Therefore, if the right hand side is finite
(for example, for $\Psi$ satisfying Assumption~\ref{WLSC}), then there is no ground state whenever
$$
v <  \frac{1}{C_d\ d\ \omega_d\,  \int_0^{R} \frac{\D{s}}{s\Psi(s^{-2})}}.
$$
\end{trivlist}
\end{remark}

\bigskip
Finally we note that our technique in proving Theorem~\ref{T3.3} is also applicable to a more general class of
potentials. Consider equation \eqref{fullev}.
For $V$ convex and increasing we have shown in Theorem~\ref{T4.1} that the maximizer $x^*\in \sU_\lambda = \{x \in \cD:
\, V(x) \leq \lambda\}\cap\cD$. For $\delta>0$ we define the $\delta$-neighborhood of $\sU_\lambda$, i.e.
$$
\sU^\delta_\lambda=\{x\in\Rd\; :\; \dist(x, \sU_\lambda)\leq \delta\}.
$$
The following result provides a sufficient condition for the maximizer to be strictly inside $\sU_\lambda$.

\begin{theorem}
Suppose that $\Psi$ satisfies Assumption \ref{WLSC}. There exist positive constants $\varrho_1$ and $\varrho_2$, dependent only
on $\Psi$, such that if for some $\delta\in (0, \inrad \sU_\lambda)$
\begin{equation*}
\frac{\lambda-\min_{x\in\Rd} V(x)}{\min_{x\in\Rd\setminus\sU^\delta_\lambda} (V(x)-\lambda)} \leq \varrho_1\,,
\end{equation*}
then
$$
\dist(x^*, \partial\sU_\lambda)\geq \frac{1}{\sqrt{\Psi^{-1}\left(\frac{\min_{x\in\Rd\setminus\sB^\delta_\lambda}
(V(x)-\lambda)}{\varrho_2}\right)}}\,.
$$
\end{theorem}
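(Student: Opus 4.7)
The plan is to follow the template of the proof of Theorem~\ref{T3.3}, with the potential well $-v\Ind_\cK$ replaced by the convex increasing potential $V$ and the geometric reference set $\cK$ replaced by the convex sublevel set $\sU_\lambda$. By Theorem~\ref{T4.1}, a global maximizer $x^*$ of $\abs{\varphi}$ lies in $\sU_\lambda$; after translation we take $x^*=0$ and set $r=\dist(0,\partial\sU_\lambda)$. Since $\sU^\delta_\lambda$ is the $\delta$-neighborhood of the convex set $\sU_\lambda$, it is convex, and after a rotation we may take a supporting hyperplane at the point of $\partial\sU^\delta_\lambda$ closest to $0$ to be orthogonal to $e_1$, so that $\sU^\delta_\lambda \subset \{y\in\Rd : y_1 \leq r+\delta\}$.

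Next I would set $R = r+\delta$, $t = c/\Psi(R^{-2})$, and transplant the path construction of Theorem~\ref{T3.3} to this scale. Assumption~\ref{WLSC} gives the uniform-in-$\gamma$ estimate \eqref{E4.20}, so the Laplace-transform argument of Theorem~\ref{T3.3} yields $c$ and $T_\circ>1$ with
$$
\Prob_S\bigl(S^\Psi_t,\; S^\Psi_{2t}-S^\Psi_t \in [R^2, R^2 T_\circ]\bigr) \;\geq\; \delta_0 \;>\; 0 \quad\text{uniformly in }R>0.
$$
With the diffusively self-similar target $\chi(s)=2\sqrt{s}\,e_1$ (so that $R\chi(s)=\chi(R^2 s)$) and the Stroock--Varadhan support theorem, the Brownian event $\Omega_1 = \{\max_{s\in[0,\,2R^2 T_\circ]}\abs{B_s - \chi(s)} < R/8\}$ has probability $\delta_1>0$ independent of $R$. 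Setting $\widehat\Omega := \Omega_1 \cap \{S^\Psi_t,\, S^\Psi_{2t}-S^\Psi_t \in [R^2, R^2 T_\circ]\}$, by independence $\Prob^0(\widehat\Omega) \geq \delta := \delta_0\delta_1$. On $\widehat\Omega$, for $s \in [t,2t]$ we have $B_{S^\Psi_s}\cdot e_1 \geq 2\sqrt{S^\Psi_s}- R/8 \geq 2R - R/8 > R+\delta$ (once $R>(8/7)\delta$), placing $X_s$ outside $\sU^\delta_\lambda$ and hence $V(X_s)-\lambda \geq M := \min_{x\in\Rd\setminus\sU^\delta_\lambda}(V(x)-\lambda)$.

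Writing $L := \lambda - \min_\Rd V \geq 0$ for the global bound on $\lambda-V$, the Feynman--Kac identity $\varphi(0) = \Exp^0[e^{\int_0^{2t}(\lambda-V(X_s))\,\D s}\,\varphi(X_{2t})]$ combined with the maximality of $\abs{\varphi}$ at $0$ (normalizing $\varphi(0)>0$) yields
$$
1 \;\leq\; \delta\, e^{(L-M)t} \,+\, (1-\delta)\, e^{2Lt},
$$
since on $\widehat\Omega$ the integrand is $\leq L$ on $[0,t]$ and $\leq -M$ on $[t,2t]$, while globally it is $\leq L$. Under $L/M \leq \varrho_1$ this reduces to $1 \leq \xi(2Mt)$ with the same $\xi(y)=\delta e^{-(1-\varrho_1)y/2}+(1-\delta)e^{\varrho_1 y}$ used in the proof of Theorem~\ref{T3.3}, and the choice $\varrho_1=\delta/(2(2-\delta))$ produces $\varepsilon_0>0$ with $\xi<1$ on $(0,\varepsilon_0)$. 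The forced inequality $2Mt\geq\varepsilon_0$ then gives $\Psi(R^{-2}) \leq 2Mc/\varepsilon_0$, i.e., $R = r+\delta \geq 1/\sqrt{\Psi^{-1}(M/\varrho_2)}$ with $\varrho_2 = \varepsilon_0/(2c)$.

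The main obstacle is that the natural travel scale is $R=r+\delta$ rather than $r$, so this procedure only delivers a lower bound on $r+\delta$. To reach the stated bound on $r$ itself, I would either absorb the additive $\delta$-shift into the constant $\varrho_2$ (permissible since the statement allows $\varrho_2$ to depend on $\Psi$ alone), or refine the Feynman--Kac bookkeeping by splitting the time interval into three phases and exploiting that inside the buffer annulus $\sU^\delta_\lambda\setminus\sU_\lambda$ the integrand $\lambda-V$ is small (and of indefinite sign) rather than comparable to $L$. A secondary concern is the uniformity in $R>0$ of the subordinator window probability; this is precisely what WLSC secures through the uniform version of \eqref{E4.20}, and without it the constants $\varrho_1,\varrho_2$ would inherit a dependence on $\inrad\sU_\lambda$, exactly as in the passage from Theorem~\ref{T3.3} to Theorem~\ref{T3.4}.
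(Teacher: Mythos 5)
Your construction is exactly the one the paper intends: this theorem is stated without proof, accompanied only by the remark that the technique of Theorem~\ref{T3.3} applies, and your transplantation of that proof (Theorem~\ref{T4.1} to place $x^*\in\sU_\lambda$, the uniform-in-scale subordinator window supplied by Assumption~\ref{WLSC} via \eqref{E4.20} exactly as in Theorem~\ref{T3.4}, the supporting half-space, the Stroock--Varadhan tube around $\chi(s)=2\sqrt{s}\,e_1$, and the two-block Feynman--Kac estimate with the same $\xi$ and the same choice of $\varrho_1$) is sound as far as it goes. The genuine gap is the one you flag yourself, and neither of your proposed repairs closes it. Since the rate $M_\delta=\min_{\Rd\setminus\sU^\delta_\lambda}(V-\lambda)$ is only available beyond $\sU^\delta_\lambda$, the journey that sets the time scale $t=c/\Psi(R^{-2})$ has length $R=\dist(x^*,\partial\sU^\delta_\lambda)=r+\delta$, so the inequality $1\le\xi(2M_\delta t)$ yields $r+\delta\ge\rho$ with $\rho:=\bigl(\Psi^{-1}(M_\delta/\varrho_2)\bigr)^{-\nicefrac{1}{2}}$, i.e.\ a lower bound on the distance to $\partial\sU^\delta_\lambda$, not the stated bound on $\dist(x^*,\partial\sU_\lambda)$. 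Absorbing the shift into $\varrho_2$ is not permissible: $\varrho_2$ may depend only on $\Psi$, while $\delta$ is a length attached to $V$ and $\lambda$, and no multiplicative adjustment converts a bound on $r+\delta$ into one on $r$; indeed, precisely in the regime where the hypothesis $L:=\lambda-\min_{\Rd}V\le\varrho_1 M_\delta$ is easiest to satisfy ($V$ very steep outside $\sU_\lambda$, so $M_\delta$ huge and $\rho\ll\delta$), the inequality $r+\delta\ge\rho$ is vacuous while the claimed bound on $r$ is not. The annulus idea also misses the true obstruction: on $\sU^\delta_\lambda\setminus\sU_\lambda$ the hypotheses give no quantitative lower bound on $V-\lambda$ at all, and the problem is not the sign of the integrand there but that collecting the rate $M_\delta$ on $[t,2t]$ forces a travel distance of order $r+\delta$.

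What your argument does prove cleanly is the following: running the construction at any scale $R$ with $\tfrac{3}{2}R>r+\delta$ (tube radius $R/2$, half-space $\sU_\lambda\subset\{y_1\le r\}$) forces $R\ge\rho$, hence $r\ge\tfrac{3}{2}\rho-\delta$; in particular the stated conclusion follows whenever $\delta\le\tfrac{1}{2}\rho(M_\delta)$, a condition checkable from the data, but for $\delta$ large compared with $\rho(M_\delta)$ the literal statement is out of reach of this method, because a journey of length of order $r$ only sees the rate $M_{\delta'}$ with $\delta'\sim r$, which the hypothesis (controlling only $L/M_\delta$) does not dominate. Since the paper supplies no written proof, you have in fact located an imprecision in the statement (or in its advertised derivation from Theorem~\ref{T3.3}) rather than deviated from the paper's argument; the honest formulations are either a bound on $\dist(x^*,\partial\sU^\delta_\lambda)$, or the stated bound under the extra proviso relating $\delta$ and $\rho(M_\delta)$. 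Two small further points: with your half-space $\sU^\delta_\lambda\subset\{y_1\le r+\delta\}$, exiting only requires first coordinate larger than $R=r+\delta$, which the tube event already guarantees, so your side condition $R>(8/7)\delta$ is superfluous; and you should avoid using $\delta$ both for the neighbourhood width and for the probability $\delta_0\delta_1$ entering $\xi$ and $\varrho_1$.
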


\medskip
\noindent
\textbf{Acknowledgments:}
This research of AB was supported in part by an INSPIRE faculty fellowship and a DST-SERB grant EMR/2016/004810. We thank the
anonymous referee for a careful reading of the manuscript and helpful suggestions.


\end{document}